\documentclass[onefignum,onetabnum]{siamonline250211}

\usepackage{amsfonts}
\usepackage{graphicx}
\usepackage{epstopdf}
\usepackage{algorithmic}
\ifpdf
  \DeclareGraphicsExtensions{.eps,.pdf,.png,.jpg}
\else
  \DeclareGraphicsExtensions{.eps}
\fi

\usepackage{enumitem}
\setlist[enumerate]{leftmargin=.5in}
\setlist[itemize]{leftmargin=.5in}

\newsiamremark{remark}{Remark}
\newsiamremark{hypothesis}{Hypothesis}
\crefname{hypothesis}{Hypothesis}{Hypotheses}
\newsiamthm{claim}{Claim}

\newcounter{experiment}

\headers{Truncated Differentiation for Inverse MFGs}{S. Liu, Y. T. Chow, and S. Wu Fung}

\title{Truncated Differentiation Through Primal-Dual Solvers for Inverse Potential Mean-Field Games\thanks{\funding{The work of Y. T. Chow was partially supported by NSF DMS-2409903 and ONR N00014-24-1-2661. The work of S. Wu Fung was partially supported by NSF DMS-2309810.}}}

\author{Siting Liu\thanks{Department of Mathematics, University of California, Riverside, Riverside, CA. (\email{sitingl@ucr.edu}, \email{yattinc@ucr.edu})}
\and
Yat Tin Chow\footnotemark[2]
\and
Samy Wu Fung\thanks{Department of Applied Mathematics and Statistics, Colorado School of Mines, Golden, CO.
  (\email{swufung@mines.edu}).}}

\usepackage{amsopn}

\usepackage{mathrsfs}
\newcommand{\Rz}{\mathscr{R}_0}

\ifpdf
\hypersetup{ pdftitle={Truncated Differentiation Through Primal-Dual Solvers for Inverse Potential Mean-Field Games}, pdfauthor={Siting Liu, Yat Tin Chow, and Samy Wu Fung} }
\fi

\usepackage{amsmath}
\usepackage{xcolor}
\usepackage{booktabs}
\usepackage{multirow}
\usepackage{array}
\usepackage{placeins}

\DeclareMathOperator*{\argmin}{arg\,min}
\DeclareMathOperator{\prox}{prox}
\DeclareMathOperator{\spr}{spr}
\newcommand{\cM}{\mathcal{M}}
\newcommand{\cA}{\mathcal{A}}
\newcommand{\cO}{\mathcal{O}}
\newcommand{\cE}{\mathcal{E}}

\newcommand{\pdhgResolventLemma}[1]{
Suppose that $f_\theta$ in~\eqref{eq:disc_objective} is proper,
closed, and convex, that $K$ is linear, and that
Assumption~\ref{asm:pdhg_stepsizes} holds, so that
$\cM\succ0$. Then the dual-extrapolated PDHG
map~\eqref{eq:T_operator_final} coincides with the unit-parameter
metric resolvent of the KKT operator: for every $z=(x,\phi)$, the
update $z^+=T_\theta(z)$ is the unique solution of
\[
\cM(z^+-z)+\cA_\theta(z^+)\ni0,
\]
that is,
\[
T_\theta=R_{\theta,1}=(I+\cM^{-1}\cA_\theta)^{-1}.
\]
}

\theoremstyle{plain}
\newtheorem{assumption}[theorem]{Assumption}

\begin{document}

\maketitle

\begin{abstract}
We study inverse potential mean-field games (MFGs), in which an unknown spatial inverse-cost (mobility) map is inferred from observed population densities.
We solve the forward MFG with a preconditioned primal-dual hybrid gradient (PDHG) method and develop Jacobian-free backpropagation (JFB-$r$), which records only the final $r$ iterations from a detached warm start while retaining the full forward solve.
To analyze this truncated differentiation method, we show that the exact-proximal dual-extrapolated PDHG map is a metric resolvent of the maximal monotone KKT operator.
This resolvent view shows that JFB-$r$ exactly differentiates a finite-trajectory surrogate and, under a locally fixed active set at an exact equilibrium detach point, converges to the implicit gradient as the tracked depth increases.
Across several inverse-MFG settings, numerical experiments show that JFB-r at moderate tracked depths can achieve recovery accuracy comparable to full unrolling while reducing memory and runtime.
\end{abstract}

\begin{keywords}
inverse mean-field games, Jacobian-free backpropagation, primal-dual hybrid gradient, monotone operator resolvent
\end{keywords}

\begin{AMS}
49N80, 65K10, 49M41, 47H05
\end{AMS}

\section{Introduction}

Mean-field games (MFGs) describe large populations of agents coupled through their distribution~\cite{lasry2007mean,caines2006large}.
Potential MFG equilibria admit convex planning formulations, for which splitting and proximal methods provide scalable solvers~\cite{achdou2012planning,benamou2015augmented,bricenoarias2018proximal,lin2021alternating,jacobs2019solving,liu2021computational,ruthotto2020machine,agrawal2022random}.
Such methods build on augmented-Lagrangian and proximal schemes for dynamic optimal transport~\cite{benamou2000computational,papadakis2014optimal}; see~\cite{liu2021splitting} for splitting methods beyond the potential class.
Related computational challenges arise in high-dimensional optimal control, where neural parameterizations and scalable optimization methods are used to represent feedback policies and interacting-agent dynamics~\cite{onken2022neural,onken2021neural,vidal2025kernel}, and in transport and flow reconstruction, where the governing dynamics are embedded within learning and inverse-problem formulations~\cite{onken2021ot,vidal2023taming,park2026implicit}.
In an \emph{inverse MFG}, the cost governing the agents is unknown and is inferred from observed population densities~\cite{ding2022mean,liu2023inverse}.

The inverse problem requires differentiating the equilibrium solution map.
Existing inverse-MFG approaches include adjoint sensitivity equations~\cite{ren2024policy} and formulations that embed the KKT conditions in a larger system~\cite{chow2023numerical}.
Adjoint sensitivity may require an additional large linearized solve, while embedded KKT formulations enlarge the optimization system.
Backpropagating through every iteration of the forward solver is another direct alternative, but it requires storing the complete solver trajectory.

We instead combine a preconditioned primal-dual hybrid gradient (PDHG) forward solver with Jacobian-Free Backpropagation (JFB)~\cite{fung2022jfb, gelphman2026convergence, yinlearning}. In particular, we consider JFB-$r$, where PDHG runs for $N_{\mathrm{itr}}$ iterations, but only the final $r\ll N_{\mathrm{itr}}$ iterations are recorded.
Thus reverse-mode tape storage and backward cost scale with $r$, up to fixed forward and framework overhead, without shortening the $N_{\mathrm{itr}}$-step forward solve.

Standard JFB guarantees assume a contractive fixed-point map~\cite{fung2022jfb, gelphman2025end, fung2026generalization, mckenzie2024three}, but PDHG is generally only averaged and nonexpansive~\cite{ryu2022large}.
Our analysis instead uses the exact metric-resolvent form of the PDHG map $T_\theta$ defined by the exact proximal update in~\eqref{eq:T_operator_final}, which requires no contraction.
Section~\ref{sec:sensitivity} states the precise scope of these guarantees and their relation to the moderate-depth experiments.

\subsection{Contributions}
\begin{itemize}
\item We develop JFB-$r$ for a preconditioned PDHG solver for inverse potential MFGs, differentiating through only the final $r$ iterations while retaining the full forward solve.

\item We identify the exact-proximal PDHG map $T_\theta$ as an exact metric resolvent and establish directional stability without fixing the flux active set, together with consistency with the implicit gradient under a locally fixed active set and exact equilibrium detachment.
For any differentiable tracked map, JFB-$r$ is also the exact gradient of its finite-trajectory surrogate.

\item We evaluate JFB-$r$ on four inverse-MFG problems, measuring recovery accuracy, memory, and runtime.
Under noise with partial-in-time density observations, JFB-25 reduces peak memory by $3.9\times$ and runtime by half relative to full unrolling, at a recovery error equal within three-seed variability.
\end{itemize}

\section{Setup and Discretization}
\label{sec:setup}

\subsection{Mean-Field Game Formulation}

Let $\rho:\Omega\times[0,T]\to\mathbb R_{\geq0}$ be the population density and $m:\Omega\times[0,T]\to\mathbb R^d$ its flux.
We consider the following potential MFG, whose equilibrium solves the convex planning problem:
\begin{equation}
\inf_{\rho,m} \; f_\theta(\rho,m) \quad \text{s.t.} \quad
\partial_t \rho + \nabla \cdot m = 0,\quad
\rho(s,0) = \mu_0(s),\quad \rho \ge 0,
\label{eq:continuous_mfg}
\end{equation}
with no-flux boundary condition $m\cdot n=0$ and objective
\begin{equation}
f_\theta(\rho,m)
=
\int_0^T\!\int_{\Omega}
\left(
\frac{\|m(s,t)\|^2}{2\,c_\theta(s)\rho(s,t)}
+
\gamma_I \rho(s,t)\log\rho(s,t)
\right) ds\,dt
+
\Psi(\rho(\cdot,T)).
\label{eq:continuous_objective}
\end{equation}
Here $c_\theta>0$ is the inverse-cost (mobility) map, $\gamma_I\geq0$, and
\begin{equation}
\Psi(\rho(\cdot,T))
=
\gamma_T \int_{\Omega} \rho(s,T)
\log\frac{\rho(s,T)}{\mu_1(s)}\,ds,
\qquad \gamma_T > 0.
\label{eq:continuous_terminal}
\end{equation}
We assume that $\mu_0$ and $\mu_1$ have equal mass and that $\mu_1>0$ where the KL term is evaluated.
The kinetic term is the convex perspective cost for $m=\rho v$, and the constraints are linear.
This variational structure descends from the Benamou--Brenier formulation of dynamic optimal transport~\cite{benamou2000computational} and its extension to potential MFG~\cite{benamou2015augmented,benamou2017variational}; finite-difference numerical methods for MFG originate in~\cite{achdou2010mean}, with the planning discretization treated in~\cite{achdou2012planning}; see~\cite{achdou2020mean,meng2025recent} for surveys.

\subsection{Discretized Forward Problem}
\label{subsec:disc}
We discretize the continuous MFG problem~\eqref{eq:continuous_mfg} on a space-time grid of points $(s_i,t_k)$, $i=0,\ldots,N_x-1$, $k=0,\ldots,N_t$, with mesh sizes $h_x$ and $h_t$.
We follow the primal-dual treatment of time-dependent MFG in~\cite{bricenoarias2019implementation}; see~\cite{bricenoarias2018proximal} for the stationary case.
The initial slice $\rho^0\equiv\mu_0$ is fixed.
For notational simplicity, the discrete operators in this subsection are written in one spatial dimension; the multidimensional implementation used in the experiments is the componentwise extension described in appendix~\ref{app:block_pdhg}.
With $\rho:=(\rho^1,\ldots,\rho^{N_t})$ collecting the active density slices and $m:=(m^+,m^-)$ the upwind flux split (with $m^+\ge 0$, $m^-\le 0$), the active primal vector is
\begin{equation}
x:=(\rho,m)\in\mathbb{R}^n,
\qquad
n=N_xN_t+2(N_x-1)N_t,
\label{eq:state_concat}
\end{equation}
where the no-flux boundary slots $m^{+,j}_{N_x-1}$ and $m^{-,j}_0$ are masked out.
The discrete forward objective is written as
\begin{equation}
\begin{aligned}
f_\theta(x)&=\cE_\theta(x)+I_C(x),\\
\cE_\theta(x)&:=
\sum_{j=0}^{N_t-1}\sum_i
\frac{(m^{+,j}_i)^2+(m^{-,j}_i)^2}{2\,c_\theta(s_i)\,\rho^{j+1}_i}
\;+\;
\gamma_I\sum_{k=1}^{N_t}\sum_i\rho^k_i\log\rho^k_i
\;+\;\frac{\Psi(\rho^{N_t})}{h_xh_t}.
\end{aligned}
\label{eq:disc_objective}
\end{equation}
Here $c_\theta$ is the spatial inverse-cost (mobility) map, parameterized by $\theta\in\mathbb{R}^p$ and satisfying $0<c_{\min}\le c_\theta(s)\le c_{\max}$.
The parameter $\gamma_I\ge0$ is the entropy weight, and $\Psi$ is the discrete counterpart of the terminal functional in~\eqref{eq:continuous_terminal}, with weight $\gamma_T>0$ inherited from the continuous form.

The indicator $I_C$ encodes the orthant constraint $C:=\{\rho\ge 0,\,m^+\ge 0,\,m^-\le 0\}$ and is handled by the proximal step in Section~\ref{subsec:PDHG}.
Throughout, the kinetic term is understood as the lower-semicontinuous perspective function: it equals $0$ if $\rho^{j+1}_i=0$ and $m^{\pm,j}_i=0$, and $+\infty$ if $\rho^{j+1}_i=0$ but $m^{\pm,j}_i\neq0$.
We also set $0\log0:=0$; with these conventions, $f_\theta$ is proper, closed, and convex on the orthant.
Under the density-interiority condition in Assumption~\ref{asm:forward_regularity}, this boundary case does not occur.
The smooth part $\cE_\theta$ is twice continuously differentiable wherever the density is bounded away from zero.

The discrete continuity equation is enforced as a single linear constraint $Kx=b$ with
\[
(Kx)^j_i\;=\;\frac{\rho^{j+1}_i-\rho^j_i}{h_t}+(\nabla\cdot m)^j_i,
\qquad j=0,\ldots,N_t-1,
\]
where $(\nabla\cdot m)^j_i$ is the upwind discrete divergence with the no-flux boundary convention; its explicit formula is given in appendix~\ref{app:K_structure}.
Here $\rho^0$ is set to zero inside the operator and its contribution is moved to the right-hand side, so that $b\in\mathbb{R}^{N_xN_t}$ has $b^0_i=\mu_{0,i}/h_t$ and $b^j_i=0$ for $j\ge 1$.
The dual variable $\phi\in\mathbb{R}^{N_c}$ with $N_c=N_xN_t$ is the Lagrange multiplier on the same interval grid as $Kx$.
The explicit form of $K$ and its adjoint, the block dimensions, and the proof that $K$ has full row rank are recorded in appendix~\ref{app:block_pdhg}.
The forward problem is the saddle point
\begin{equation}
\min_{x\in\mathbb{R}^n}\;\sup_{\phi\in\mathbb{R}^{N_c}}\;
f_\theta(x)+\langle\phi,Kx-b\rangle.
\label{eq:disc_forward_problem_compact}
\end{equation}

All subsequent theory concerns the discrete forward and inverse problems on a fixed space–time grid.
The constants in the local assumptions below may depend on $(h_x,h_t)$; no mesh-uniform or discrete-to-continuum limit is claimed.

\subsection{Inverse Mean-Field Game Formulation}\label{sec:inverse}

In the inverse problem, the inverse-cost map $c_\theta(s)$ is unknown, and we estimate its parameters $\theta\in\mathbb{R}^p$ from observed population densities.
Let $\rho^{\mathrm{obs}}$ denote the observations associated with the active density slices $k=1,\ldots,N_t$, and let
\begin{equation}
P_\rho : \mathbb{R}^n \to \mathbb{R}^{N_xN_t},
\end{equation}
denote the linear operator that extracts the active density block $\rho=(\rho^1,\ldots,\rho^{N_t})$ from the primal variable $x=(\rho,m)$, so that $P_\rho x = \rho$.
The fixed initial slice $\rho^0\equiv\mu_0$ is excluded from the comparison since it does not depend on $\theta$.
For a given parameter vector $\theta$, the forward problem~\eqref{eq:disc_forward_problem_compact} produces the equilibrium solution
\begin{equation}
x_\theta^\star = (\rho_\theta^\star, m_\theta^\star).
\end{equation}
If the problem is not globally single-valued, $x_\theta^\star$ denotes the solver-selected branch; the sensitivity statements are local to it.

The inverse MFG problem is therefore
\begin{equation}
\min_{\theta \in \mathbb{R}^p}
\;
F(\theta)
:=
\ell(P_\rho x_\theta^\star,\rho^{\mathrm{obs}}),
\label{eq:inverse_problem}
\end{equation}
where $\ell$ is a data misfit functional, for example
\begin{equation}
\ell(P_\rho x_\theta^\star,\rho^{\mathrm{obs}})
=
\frac12
\|
P_\rho x_\theta^\star - \rho^{\mathrm{obs}}
\|_2^2 .
\label{eq:quadratic_misfit}
\end{equation}
This renders the inverse MFG as training an implicit neural network (or Deep Equilibrium Network)~\cite{el2021implicit, bai2019deep, knutson2026logical}.

The data-misfit functional may include an observation mask, so~\eqref{eq:inverse_problem} also covers partial observations.

With $z=(x,\phi)$, define
\begin{equation}
\widehat\ell(z):=\ell(P_\rho x,\rho^{\mathrm{obs}}),
\label{eq:lifted_loss}
\end{equation}
so that $F(\theta)=\widehat\ell(z_\theta^\star)$ and, whenever $\ell$ is differentiable in its first argument, $\nabla_z\widehat\ell(z)=(P_\rho^\top\nabla_1\ell(P_\rho x,\rho^{\mathrm{obs}}),\,0)$. 
For fixed $\theta$, the forward problem is convex in $(\rho,m)$.
With nonlinear cost parameterizations, such as the neural network used in section~\ref{sec:experiments}, the outer objective $F(\theta)$ is generally nonconvex; the descent statements are therefore local in $\theta$.

\begin{remark}
The parameter-estimation formulation~\eqref{eq:inverse_problem} follows~\cite{ding2022mean}.
Related inverse-MFG approaches include KKT-based formulations~\cite{chow2023numerical}, bilevel optimization~\cite{yu2024bilevel,huang2025joint}, Gaussian-process inference~\cite{guo2024decoding,zhang2025surrogate}, equilibrium-correction iteration~\cite{yu2025equilibrium}, and policy-iteration methods~\cite{ren2024policy}.
Our focus is differentiation through the forward solver; identifiability of the inverse-cost map is studied in~\cite{liu2022inverse,liu2023inverse,imanuvilov2024lipschitz, ren2024reconstructing}.
\end{remark}

\subsection{PDHG Solver}
\label{subsec:PDHG}

We solve~\eqref{eq:disc_forward_problem_compact} using the G-prox preconditioned PDHG method of~\cite{jacobs2019solving,liu2021computational,liu2021splitting,wang2025primal}.
We take the primal and dual step operators $T_x:=\tau I$ and $S:=\sigma W^{-1}$, respectively, where the \emph{exact Schur preconditioner} is
\begin{equation}
W\;:=\;KK^\top\;=\;I_x\otimes B_t\;+\;2L_x\otimes I_t.
\label{eq:W_def}
\end{equation}
Here $L_x$ is the one-dimensional Neumann graph Laplacian, and $B_t=D_tD_t^\top$ is the temporal block (appendix~\ref{app:block_pdhg}).
Since $\rho^0$ is fixed, $D_t$ has full row rank and $W\succ 0$ on the dual space.
We apply $W^{-1}$ using a DCT in space and an eigendecomposition of the small $N_t\times N_t$ time block~\cite{liu2021computational}.

The dual-extrapolated PDHG iteration reads
\begin{align}
\phi^{n+1} &= \phi^n + S(Kx^n-b),
\label{eq:pdhg_dual}\\
x^{n+1} &= \prox_{T_x f_\theta}\!\bigl(x^n - T_xK^\top(2\phi^{n+1}-\phi^n)\bigr),
\label{eq:pdhg_primal}
\end{align}
where $\prox_{T_x f}(y):=\argmin_x\{f(x)+\tfrac12\|x-y\|_{T_x^{-1}}^2\}$.
The proximal step decouples across $(i,j)$ (appendix~\ref{app:block_pdhg}).
Together~\eqref{eq:pdhg_dual} and~\eqref{eq:pdhg_primal} define the fixed-point map $z^{n+1}=T_\theta(z^n)$ with $z=(x,\phi)\in\mathbb{R}^{n+N_c}$:
\begin{equation}
T_\theta(x,\phi)
=
\begin{pmatrix}
\prox_{T_x f_\theta}\!\bigl(x - T_xK^\top(\phi+2S(Kx-b))\bigr) \\
\phi + S(Kx-b)
\end{pmatrix}.
\label{eq:T_operator_final}
\end{equation}
Throughout, $T_\theta$ denotes the exact-proximal map in~\eqref{eq:T_operator_final}, analyzed in sections~\ref{sec:structure} and~\ref{sec:sensitivity}, and $\widetilde T_\theta$ the implemented map obtained by replacing each per-cell proximal solve in~\eqref{eq:T_operator_final} by a fixed budget of $12$ damped Newton iterations (appendix~\ref{app:block_pdhg}).
Algorithm~\ref{alg:jfb_pdhg} and the experiments of section~\ref{sec:experiments} use $\widetilde T_\theta$.

For the exact-proximal analysis, an equilibrium is a fixed point of $T_\theta$:
\begin{equation}
z_\theta^\star
=
T_\theta(z_\theta^\star),
\qquad
z_\theta^\star = (x_\theta^\star,\phi_\theta^\star).
\label{eq:inverse_fixed_point}
\end{equation}
Thus, the inverse problem~\eqref{eq:inverse_problem} can be written as
\begin{equation}
\min_{\theta}
\;
\ell(P_\rho x_\theta^\star,\rho^{\mathrm{obs}})
\quad
\text{subject to}
\quad
z_\theta^\star = T_\theta(z_\theta^\star).
\label{eq:bilevel_inverse}
\end{equation}

\subsection{KKT Operator, Solver Metric, and Step Sizes}
\label{subsec:kkt_metric}

The saddle-point problem~\eqref{eq:disc_forward_problem_compact} is equivalent to the monotone inclusion $0\in\cA_\theta(z)$ for the KKT operator
\begin{equation}
\cA_\theta(x,\phi)
=
\begin{pmatrix}
\partial f_\theta(x)+K^\top\phi\\[1mm]
b-Kx
\end{pmatrix},
\qquad z=(x,\phi).
\label{eq:kkt_operator}
\end{equation}
Its maximal monotonicity is established in section~\ref{sec:structure} (Proposition~\ref{prop:maximal_monotone}).
Only the subdifferential term $\partial f_\theta$ depends on $\theta$; the constraint data $K$ and $b$ do not.
Define the symmetric block matrix
\begin{equation}
\cM =
\begin{pmatrix}
T_x^{-1} & K^\top \\
K & S^{-1}
\end{pmatrix}.
\label{eq:M_metric}
\end{equation}

\begin{assumption}
\label{asm:pdhg_stepsizes}
The step-size operators $T_x=\tau I$ and $S=\sigma W^{-1}$ satisfy
\begin{equation}
\bigl\|S^{1/2} K\, T_x^{1/2}\bigr\|_2^2 < 1.
\label{eq:condat_stepsize}
\end{equation}
Equivalently, since $W=KK^\top$, \eqref{eq:condat_stepsize} is exactly $\tau\sigma<1$ (appendix~\ref{app:stepsize_check}).
\end{assumption}
\begin{remark}
\label{rmk:stepsize}
Inequality~\eqref{eq:condat_stepsize} is the standard PDHG step-size condition~\cite{chambolle2011firstorder,condat2013primal, chambolle2016ergodic}.
It implies $\cM\succ0$, so
\begin{equation*}
\langle z,w\rangle_{\cM}:=\langle \cM z,w\rangle,
\qquad
\|z\|_{\cM}^2:=\langle \cM z,z\rangle
\end{equation*}
define a weighted inner product and norm.
We use $\tau=\sigma=0.99$.
See appendices~\ref{app:stepsize_check} and~\ref{app:resolvent_facts}.
\end{remark}

\subsection{JFB-\texorpdfstring{$r$}{r} Through PDHG}
\label{subsec:jfb_method}

Jacobian-Free Backpropagation (JFB)~\cite{fung2022jfb} runs the full $N_{\mathrm{itr}}$-step PDHG solve but records only the final $r$ steps for reverse-mode differentiation; the state entering those steps is detached. 
JFB has previously been used successfully to train implicit deep learning models~\cite{el2021implicit} for applications in imaging~\cite{heaton2021feasibility}, traffic-flow modeling~\cite{mckenzie2024three}, and arbitrage trading~\cite{heaton2023explainable}.
For $r>1$, JFB-$r$ differentiates the $r$-fold composition $T_\theta^r$; this is closely related to phantom-gradient training~\cite{geng2021training} and truncated bilevel backpropagation~\cite{shaban2019truncated}.
Algorithm~\ref{alg:jfb_pdhg} summarizes the training loop, and section~\ref{sec:sensitivity} characterizes the resulting direction and its relation to the exact implicit gradient.

Let $\bar z$ denote the detached base point, held fixed with respect to $\theta$.
The JFB-$r$ direction at $\theta_0$ is
\begin{equation}
d_{r,\bar z}^{\mathrm{JFB}}(\theta_0)
:=
\Bigl(\partial_\theta\, T_\theta^{\,r}(\bar z)\big|_{\theta=\theta_0}\Bigr)^{\!\top}
\nabla_z\widehat\ell\bigl(T_{\theta_0}^{\,r}(\bar z)\bigr).
\label{eq:jfb_r_def}
\end{equation}
Under the differentiability conditions of Proposition~\ref{prop:jfbr_surrogate}, this is the exact gradient of a finite-trajectory surrogate for any detached $\bar z$.
Comparison with the exact implicit gradient additionally assumes $\bar z=z_{\theta_0}^\star$; under this assumption $T_{\theta_0}^{\,r}(\bar z)=\bar z$, we abbreviate $d_r^{\mathrm{JFB}}:=d_{r,\bar z}^{\mathrm{JFB}}$, and $r=1$ recovers the one-step JFB direction of~\cite{fung2022jfb}.
In computation, Algorithm~\ref{alg:jfb_pdhg} detaches the near-equilibrium warm-up output.
Section~\ref{sec:experiments} reports the detach-point residual.

\begin{algorithm}[htbp]
\caption{JFB-$r$ training through PDHG for inverse MFGs}
\label{alg:jfb_pdhg}
\begin{algorithmic}[1]
\REQUIRE Observations $\rho^{\mathrm{obs}}$, loss $\ell$, initial parameter $\theta^0$, implemented PDHG map $\widetilde T_\theta$, forward steps $N_{\mathrm{itr}}$, tracked depth $1\le r\le N_{\mathrm{itr}}$, outer iterations $N_{\mathrm{out}}$, and optimizer $\mathrm{Opt}$
\ENSURE Learned parameter $\theta^{N_{\mathrm{out}}}$
\FOR{$j=0,1,\ldots,N_{\mathrm{out}}-1$}
    \STATE At $j=0$, initialize $z^0$ from the cold start of appendix~\ref{app:experimental_details}; for $j\ge1$, set $z^0$ to the detached final state of outer step $j-1$.
    \STATE Compute
    $z^{N_{\mathrm{itr}}-r}
    =\widetilde T_{\theta^j}^{\,N_{\mathrm{itr}}-r}(z^0)$
    without recording derivatives, and detach the resulting state
    $\bar z^{\,j}:=z^{N_{\mathrm{itr}}-r}$.
    \FOR{$k=N_{\mathrm{itr}}-r,N_{\mathrm{itr}}-r+1,\ldots,N_{\mathrm{itr}}-1$}
        \STATE Compute $z^{k+1}=\widetilde T_{\theta^j}(z^k)$ with the differentiation tape enabled.
    \ENDFOR
    \STATE Write $z^{N_{\mathrm{itr}}}=(x^{N_{\mathrm{itr}}},\phi^{N_{\mathrm{itr}}})$ and compute $\mathcal L(\theta^j)=\ell(P_\rho x^{N_{\mathrm{itr}}},\rho^{\mathrm{obs}})$.
    \STATE Backpropagate $\mathcal L(\theta^j)$ through the recorded tape to obtain
    $\widehat d_r^{\,j}
    =\nabla_\theta\,\widehat\ell\bigl(\widetilde T_\theta^{\,r}(\bar z^{\,j})\bigr)
    \big|_{\theta=\theta^j}$,
    the exact gradient of the finite-trajectory surrogate of Proposition~\ref{prop:jfbr_surrogate} applied to $\widetilde T_\theta$.
    \STATE Update $\theta^{j+1}=\mathrm{Opt}(\theta^j,\widehat d_r^{\,j})$.
\ENDFOR
\end{algorithmic}
\end{algorithm}

\paragraph{Warm starts}
Each outer step initializes PDHG from the previous step's final state.
The cached state is detached, so gradients do not cross outer iterations, and all $N_{\mathrm{itr}}$ inner steps are still run.

\section{Metric-Resolvent Structure and Directional Stability}
\label{sec:structure}

PDHG is generally nonexpansive rather than contractive, so standard contraction-based analyses of JFB~\cite{fung2022jfb, mckenziedifferentiating, li2026end} do not apply directly.
We instead use the exact metric-resolvent representation of the PDHG map $T_\theta$ defined in~\eqref{eq:T_operator_final}.
Starting from maximal monotonicity of the KKT operator, we establish firm nonexpansiveness, introduce the local curvature needed to eliminate neutral directions, and prove grouped decay of directional state perturbations without assuming strict complementarity for the flux constraints.

\subsection{Metric-Resolvent Form of PDHG}
\label{subsec:resolvent}

We first work under the basic conditions needed for the PDHG resolvent representation: $f_\theta$ is proper, closed, and convex, $K$ is linear, and Assumption~\ref{asm:pdhg_stepsizes} ensures $\cM\succ0$; the local conditions of section~\ref{subsec:forward_geometry} enter only later.

\begin{proposition}[Maximal monotonicity of the KKT operator]
\label{prop:maximal_monotone}
Let $f_\theta=\cE_\theta+I_C$ in~\eqref{eq:disc_objective} be proper, closed, and convex, and let $K$ be linear.
Then the KKT operator $\cA_\theta$ of~\eqref{eq:kkt_operator} is maximal monotone on $\mathbb{R}^{n+N_c}$, and its zeros are exactly the saddle points of~\eqref{eq:disc_forward_problem_compact}.
\end{proposition}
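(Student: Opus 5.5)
The plan is to split $\cA_\theta$ into a subdifferential part and a skew linear part, and use the classical fact that the sum of a maximal monotone operator and a continuous monotone linear map with full domain is maximal monotone. Write
\[
\cA_\theta=\mathcal{B}+\mathcal{C},\qquad
\mathcal{B}(x,\phi):=\partial f_\theta(x)\times\{-b\},\qquad
\mathcal{C}(x,\phi):=\begin{pmatrix}0&K^\top\\-K&0\end{pmatrix}\begin{pmatrix}x\\ \phi\end{pmatrix}.
\]
Since $f_\theta$ is proper, closed, and convex on $\mathbb{R}^n$, Rockafellar's theorem gives that $\partial f_\theta$ is maximal monotone on $\mathbb{R}^n$. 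Equivalently, $\mathcal{B}=\partial g$ with $g(x,\phi):=f_\theta(x)-\langle b,\phi\rangle$, which is proper, closed, and convex on $\mathbb{R}^{n+N_c}$, so $\mathcal{B}$ is maximal monotone. The matrix of $\mathcal{C}$ is skew-symmetric, so $\langle \mathcal{C}z-\mathcal{C}w,z-w\rangle=0$. Hence $\mathcal{C}$ is monotone, linear, and single-valued with full domain, and therefore maximal monotone.

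For the sum, I will invoke the standard sum rule: if $\mathcal{B}$ is maximal monotone and $\mathcal{C}$ is monotone, continuous, and single-valued on the whole space, then $\mathcal{B}+\mathcal{C}$ is maximal monotone (Bauschke--Combettes, Cor.~25.5; or Rockafellar's sum theorem, since $\operatorname{dom}\mathcal{C}$ is the whole space and so meets $\operatorname{int}$ of anything nonempty). The only thing to check here is $\operatorname{dom}\mathcal{B}\neq\emptyset$, which holds because $f_\theta$ is proper and a proper closed convex function has nonempty subdifferential domain. This step is essentially citation, so I do not expect it to be the obstacle. If a self-contained argument is preferred, I would use Minty's theorem instead: show that $I+\cA_\theta$ is surjective by solving $z+\cA_\theta z\ni w$. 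This reduces to a strongly convex–strongly concave saddle problem, $\min_x\max_\phi f_\theta(x)+\tfrac12\|x-w_x\|^2+\langle\phi,Kx-b\rangle-\tfrac12\|\phi-w_\phi\|^2$, which has a unique saddle point.

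For the characterization of zeros, I will go through the Lagrangian $L(x,\phi)=f_\theta(x)+\langle\phi,Kx-b\rangle$, which is convex in $x$ and linear in $\phi$. A pair $(x^\star,\phi^\star)$ is a saddle point iff $x^\star$ minimizes $L(\cdot,\phi^\star)$ and $\phi^\star$ maximizes $L(x^\star,\cdot)$.
\begin{itemize}
\item The first condition is equivalent, by Fermat's rule and the subdifferential sum rule for the sum of $f_\theta$ and a linear functional (valid with no qualification since the linear term is finite everywhere), to $0\in\partial f_\theta(x^\star)+K^\top\phi^\star$.
\item The second condition holds iff the linear function $\phi\mapsto\langle\phi,Kx^\star-b\rangle$ is bounded above, which forces $Kx^\star=b$, i.e.\ $0=b-Kx^\star$.
\end{itemize}
Together these two conditions are exactly $0\in\cA_\theta(x^\star,\phi^\star)$, and each step reverses, which gives the equivalence.

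The one point needing care is that $f_\theta$ may take the value $+\infty$ (orthant indicator, perspective boundary). The Fermat and sum-rule steps must therefore be stated for extended-valued convex functions. This is fine because only a finite linear term is added, so the main potential obstacle reduces to a correct citation.
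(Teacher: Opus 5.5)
Your proposal follows the paper's proof: the paper also writes $\cA_\theta$ as $\partial\Phi$ with $\Phi(x,\phi)=f_\theta(x)$ plus a continuous, full-domain affine monotone operator with skew linear part, $G_b(x,\phi)=(K^\top\phi,\,b-Kx)$, and applies Bauschke--Combettes Cor.~25.5; the only differences are that the paper places $b$ in the affine part rather than in $g$, and that it states the zero characterization in one line where you spell it out. One slip to fix: with $\mathcal{C}(x,\phi)=(K^\top\phi,-Kx)$ the second block of $\mathcal{B}+\mathcal{C}$ is $-b-Kx$, so you need $\mathcal{B}=\partial f_\theta(x)\times\{b\}$, i.e.\ $g(x,\phi)=f_\theta(x)+\langle b,\phi\rangle$, and this changes nothing else in the argument.
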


\begin{proof}
Write $\cA_\theta=G_b+\partial\Phi$, where $G_b(x,\phi)=(K^\top\phi,b-Kx)$ is continuous, full-domain, and affine monotone, with skew linear part, and $\Phi(x,\phi)=f_\theta(x)$.
The sum theorem~\cite[Cor.~25.5]{bauschke2017convex} gives maximal monotonicity, while $0\in\cA_\theta(z)$ is precisely the saddle-point KKT system.
\end{proof}

Since $\cA_\theta$ is maximal monotone and $\cM\succ0$, for $\lambda>0$ we define the \emph{metric resolvent}
\begin{equation}
R_{\theta,\lambda}
:=
\bigl(I+\lambda \cM^{-1}\cA_\theta\bigr)^{-1},
\qquad\text{i.e.}\qquad
z^+=R_{\theta,\lambda}(z)
\;\Longleftrightarrow\;
\cM(z^+-z)+\lambda\cA_\theta(z^+)\ni0.
\label{eq:resolvent_def}
\end{equation}
Here $\lambda$ is an auxiliary resolvent parameter, and the PDHG map $T_\theta$ defined in~\eqref{eq:T_operator_final} corresponds to $\lambda=1$.
The interpretation of PDHG as a preconditioned proximal-point iteration is classical~\cite{he2012convergence,chambolle2011firstorder, condat2013primal,vu2013splitting}; the following result identifies the exact resolvent corresponding to our dual-extrapolated, Schur-preconditioned update.

\begin{theorem}[PDHG is a metric resolvent]
\pdhgResolventLemma{main}
\label{thm:pdhg_resolvent}
\end{theorem}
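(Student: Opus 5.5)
The plan is to verify the identity directly, block by block. I would write out the inclusion $\cM(z^+-z)+\cA_\theta(z^+)\ni0$ for $z=(x,\phi)$ and $z^+=(x^+,\phi^+)$, using the block forms~\eqref{eq:M_metric} and~\eqref{eq:kkt_operator}. I would then show that this system of two inclusions is equivalent to the two PDHG updates~\eqref{eq:pdhg_dual}--\eqref{eq:pdhg_primal}. The point to keep in view is that the off-diagonal blocks $K^\top$ and $K$ of $\cM$ are exactly what make the system solvable in closed form: the dual row becomes explicit, and the primal row then reduces to a single proximal step.

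\emph{Dual row.} The second block reads
\[
K(x^+-x)+S^{-1}(\phi^+-\phi)+b-Kx^+=0 .
\]
The $Kx^+$ terms cancel, which leaves $S^{-1}(\phi^+-\phi)=Kx-b$, that is, $\phi^+=\phi+S(Kx-b)$. This is~\eqref{eq:pdhg_dual} and the second component of~\eqref{eq:T_operator_final}. The update is explicit because $S=\sigma W^{-1}$ is invertible, with $W\succ0$.

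\emph{Primal row.} The first block reads
\[
T_x^{-1}(x^+-x)+K^\top(\phi^+-\phi)+\partial f_\theta(x^+)+K^\top\phi^+\ni0 ,
\]
which simplifies to
\[
0\in\partial f_\theta(x^+)+T_x^{-1}\bigl(x^+-(x-T_xK^\top(2\phi^+-\phi))\bigr).
\]
Since $f_\theta$ is proper, closed, and convex and $T_x=\tau I\succ0$, the function $u\mapsto f_\theta(u)+\tfrac12\|u-y\|^2_{T_x^{-1}}$ is strongly convex. Fermat's rule, together with the sum rule (the quadratic term is finite and smooth everywhere), shows that this inclusion holds if and only if $x^+=\prox_{T_xf_\theta}(y)$ with $y=x-T_xK^\top(2\phi^+-\phi)$. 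Substituting $2\phi^+-\phi=\phi+2S(Kx-b)$ recovers the first component of~\eqref{eq:T_operator_final}.

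\emph{Existence, uniqueness, and the resolvent identity.} The two equivalences show that $z^+$ solves the inclusion if and only if $z^+=T_\theta(z)$. Existence and uniqueness follow because $\phi^+$ is given explicitly and the prox of a proper, closed, strongly convex objective has a unique minimizer. Hence $(I+\cM^{-1}\cA_\theta)^{-1}(z)$ is the singleton $\{T_\theta(z)\}$ for every $z$, which gives $T_\theta=R_{\theta,1}$.

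For consistency with the general definition~\eqref{eq:resolvent_def}, I would also note the following. Since $\cM\succ0$ by Remark~\ref{rmk:stepsize}, $\cM^{-1}\cA_\theta$ is maximal monotone in $\langle\cdot,\cdot\rangle_\cM$ by Proposition~\ref{prop:maximal_monotone}, so Minty's theorem independently gives a single-valued, full-domain resolvent. The direct computation, however, does not need this.

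The main obstacle is bookkeeping rather than anything conceptual. The signs and placement of $K$ versus $K^\top$ in $\cM$ must produce the extrapolation $2\phi^+-\phi$, rather than $2x^+-x$, in the correct row. With the dual-first ordering of~\eqref{eq:pdhg_dual}--\eqref{eq:pdhg_primal}, the lower-left block $K$ is what cancels $Kx^+$. I would double-check this cancellation carefully, since swapping the off-diagonal blocks would instead yield the primal-extrapolated variant.
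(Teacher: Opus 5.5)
Your proposal is correct and follows the paper's proof essentially verbatim: you use the same blockwise expansion of $\cM(z^+-z)+\cA_\theta(z^+)\ni0$, the same cancellation of $Kx^+$ in the dual row, and the same recombination into $K^\top(2\phi^+-\phi)$ in the primal row to obtain the prox optimality condition. The only minor difference is that you get existence and uniqueness directly from the explicit dual update and the strongly convex prox subproblem, whereas the paper defers single-valuedness to Minty's theorem via Corollary~\ref{cor:resolvent_facts}; both routes are valid.
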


The blockwise verification of the identity is given in appendix~\ref{app:pdhg_resolvent_proof}.

\begin{corollary}[Properties of the metric resolvent]
\label{cor:resolvent_facts}
Let $\cA_\theta$ be maximal monotone on the primal-dual space (Proposition~\ref{prop:maximal_monotone}) and let $\cM\succ0$, $\lambda>0$.
Then:
\begin{enumerate}
\item $R_{\theta,\lambda}$ is everywhere defined and single-valued;
\item $R_{\theta,\lambda}$ is firmly nonexpansive in the $\cM$-inner product:
    \[
    \|R_{\theta,\lambda}z-R_{\theta,\lambda}w\|_{\cM}^2
    \le
    \langle R_{\theta,\lambda}z-R_{\theta,\lambda}w,\;z-w\rangle_{\cM}
    \qquad\text{for all } z,w;
    \]
\item fixed points are exactly equilibria: $R_{\theta,\lambda}(z^\star)=z^\star \iff 0\in\cA_\theta(z^\star)$.
\end{enumerate}
\end{corollary}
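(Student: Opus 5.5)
The plan is to reduce every item to the standard theory of resolvents of maximal monotone operators by changing the inner product. I will show that $B:=\cM^{-1}\cA_\theta$ is maximal monotone on the Hilbert space $H_\cM:=(\mathbb{R}^{n+N_c},\langle\cdot,\cdot\rangle_{\cM})$. Then $R_{\theta,\lambda}=(I+\lambda B)^{-1}$ is literally the ordinary resolvent of $\lambda B$ in $H_\cM$, and items 1--3 follow from Minty's theorem and the usual resolvent identities. Since $\cM\succ0$ by Remark~\ref{rmk:stepsize}, $H_\cM$ is a genuine finite-dimensional Hilbert space, and $\cM$ is symmetric, so $\cM^{-1}$ is well defined.

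\emph{Monotonicity of $B$.} Take $(z,u),(w,v)$ in the graph of $B$, so $\cM u\in\cA_\theta z$ and $\cM v\in\cA_\theta w$. Then
\[
\langle u-v,z-w\rangle_{\cM}=\langle \cM u-\cM v,z-w\rangle\ge0
\]
by monotonicity of $\cA_\theta$ (Proposition~\ref{prop:maximal_monotone}). The graph of $B$ is the image of the graph of $\cA_\theta$ under the linear bijection $(z,y)\mapsto(z,\cM^{-1}y)$. That bijection maps monotone extensions in $H_\cM$ to monotone extensions in the Euclidean pairing, and conversely, by the same computation run in reverse. Hence maximality transfers, and $B$ is maximal monotone in $H_\cM$.

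\emph{Item 1.} I expect this to be the only step that needs care, and I will prove it directly rather than rely on the transfer argument. Fix $z$ and $\lambda>0$. Solving $\cM(z^+-z)+\lambda\cA_\theta(z^+)\ni0$ is the same as finding $z^+$ with $\cM z\in(\cM+\lambda\cA_\theta)(z^+)$. The operator $\lambda\cA_\theta+\cM$ is maximal monotone, being the sum of a maximal monotone operator and a full-domain continuous linear monotone map (the same sum theorem used in Proposition~\ref{prop:maximal_monotone}). It is also strongly monotone with modulus $\lambda_{\min}(\cM)>0$. A maximal, strongly monotone operator on a finite-dimensional space is surjective, by Minty's theorem applied after subtracting $\lambda_{\min}(\cM)I$, and it has a single-valued inverse. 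So existence and uniqueness of $z^+$ hold for every $z$.

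\emph{Item 2.} Let $z^+=R_{\theta,\lambda}z$ and $w^+=R_{\theta,\lambda}w$. Then $\cM(z-z^+)\in\lambda\cA_\theta z^+$ and $\cM(w-w^+)\in\lambda\cA_\theta w^+$. Monotonicity of $\cA_\theta$ gives
\[
\langle \cM\bigl((z-w)-(z^+-w^+)\bigr),\,z^+-w^+\rangle\ge0,
\]
which rearranges to $\|z^+-w^+\|_\cM^2\le\langle z^+-w^+,z-w\rangle_\cM$, using the symmetry of $\cM$.

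\emph{Item 3.} The point $z^\star$ is a fixed point exactly when $z^+=z^\star$ solves the inclusion. The inclusion then reads $0\in\lambda\cA_\theta(z^\star)$, which is equivalent to $0\in\cA_\theta(z^\star)$ because $\lambda>0$. Uniqueness from item 1 gives the reverse implication: if $0\in\cA_\theta(z^\star)$, then $z^\star$ solves the inclusion at $z=z^\star$, so $R_{\theta,\lambda}(z^\star)=z^\star$.
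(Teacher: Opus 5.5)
Your proof is correct and follows the paper's argument. Both use the same $\cM$-metric transfer for $\cM^{-1}\cA_\theta$, the same monotonicity pairing for firm nonexpansiveness, and the same reading of the inclusion at $z^+=z^\star$ for the fixed-point characterization. The only variation is item~1: you get existence and uniqueness from surjectivity of the maximal, $\lambda_{\min}(\cM)$-strongly monotone operator $\cM+\lambda\cA_\theta$ in the Euclidean pairing, rather than applying Minty's theorem in $(\mathbb{R}^{n+N_c},\langle\cdot,\cdot\rangle_{\cM})$ as the paper does. Both routes are valid, and yours makes your transfer paragraph redundant.
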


Parts (1)--(3) are Minty's theorem and the standard firm-nonexpansiveness computation~\cite{minty1962monotone, rockafellar1976monotone, bauschke2017convex}; a self-contained proof, including $\cM\succ0$, is given in appendix~\ref{app:resolvent_facts}.

Consequently, the map $T_\theta=R_{\theta,1}$ is firmly nonexpansive in the $\cM$-metric and has the MFG equilibria as its fixed points.
Firm nonexpansiveness controls the propagation of state perturbations and gives $\|DR_{\theta,\lambda}\|_{\cM}\le1$ wherever a classical derivative exists, but it provides no contraction factor strictly below one~\cite{ryu2022large}; strict decay relative to an equilibrium requires additional local geometry of the forward objective.

\subsection{Local Curvature of the Forward Problem}
\label{subsec:forward_geometry}

We formalize the required local curvature through the following finite-dimensional conditions, which follow~\cite{yu2024bilevel}.

\begin{assumption}[Regularity of the forward objective]
\label{asm:forward_regularity}
Let $x^\star$ denote the primal component of the fixed point under consideration.
There is a convex neighborhood $U$ of $x^\star$, a bounded parameter neighborhood $\Theta_0$ of the base parameter, and constants $0 < \rho_{\min} \le \rho_{\max} < \infty$, $0 \le m_{\text{max}} < \infty$, and $0 < c_{\min} \le c_{\max} < \infty$ such that the following hold.

\begin{enumerate}
\item The density and flux are locally bounded, with the density bounded away from zero:
    \begin{equation}
    \rho_{\min} \le \rho^k_i \le \rho_{\max},
    \qquad
    \bigl|m^{\pm,j}_i\bigr|\le m_{\text{max}}
    \qquad
    \text{for all } x\in U \text{ and all admissible } i,j,k.
    \end{equation}

\item The spatial inverse-cost map is uniformly positive and bounded:
    \begin{equation}
    0 < c_{\min} \le c_\theta(s_i) \le c_{\max}
    \qquad
    \text{for all grid points } s_i \text{ and all } \theta\in\Theta_0.
    \end{equation}

\item The terminal target is positive, $\mu_{1,i}>0$ for all grid points.

\item The entropy and terminal regularization weights satisfy $\gamma_I > 0$ and $\gamma_T > 0$.
\end{enumerate}
\end{assumption}

On the finite grid, the upper bounds follow locally from continuity after restricting $U$ and $\Theta_0$; the implemented parameterization also supplies a uniform positive lower bound $c_\theta(s_i)\ge0.01$.
The fixed positive vector $\mu_1$ has finite positive lower and upper bounds and does not enter the Hessian estimates.
For the explicit objective~\eqref{eq:disc_objective}, $C^2$ regularity in $(\rho,m)$ follows on $U$ from density interiority and $c_\theta(s_i)>0$.

\begin{lemma}[Smoothness and strong convexity, cf.\ Lemma 3.4 in~\cite{yu2024bilevel}]
\label{lemma:f_theta_scvx}
Under Assumption~\ref{asm:forward_regularity}, the smooth part $\cE_\theta$ is $C^2$, $L_f$-smooth, and $\mu$-strongly convex on $U$, uniformly for $\theta\in\Theta_0$, for some constants $L_f>0$ and $\mu>0$.
Consequently, $f_\theta=\cE_\theta+I_C$ is $\mu$-strongly convex on $U$ in the extended-valued sense.
\end{lemma}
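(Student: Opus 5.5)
The plan is to compute the Hessian of $\cE_\theta$ explicitly on $U$ and bound its eigenvalues above and below using the constants of Assumption~\ref{asm:forward_regularity}. The objective separates into three families of terms. First, the per-cell kinetic terms $g(\rho,m)=m^2/(2c\rho)$, one for each $(i,j)$ and each sign $\pm$. Second, the entropy terms $\gamma_I\rho\log\rho$. Third, the terminal KL term $\gamma_T h_x^{-1}h_t^{-1}\sum_i\rho^{N_t}_i\log(\rho^{N_t}_i/\mu_{1,i})$. On $U$ every density is at least $\rho_{\min}>0$, and $c_\theta\ge c_{\min}>0$. Hence each term is a composition of smooth functions on a region avoiding the singular set, so $\cE_\theta$ is $C^2$ on $U$, with second derivatives continuous in $\theta$ through $c_\theta$.

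\textbf{Upper bound ($L_f$).} The kinetic block in the variables $(\rho,m)$ is
\[
\frac{1}{c\rho}\begin{pmatrix} m^2/\rho^2 & -m/\rho\\ -m/\rho & 1\end{pmatrix}.
\]
Its entries are bounded by $(1+m_{\max}/\rho_{\min})^2/(c_{\min}\rho_{\min})$. The entropy and KL terms contribute diagonal entries $\gamma_I/\rho$ and $\gamma_T/(h_xh_t\rho)$, both at most a constant times $1/\rho_{\min}$. Each variable appears in a bounded number of terms: $\rho^{j+1}_i$ couples to at most two flux slots. So Gershgorin, or summing the block norms, gives a uniform $L_f$. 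Since $U$ is convex, this Hessian bound yields $L_f$-smoothness of the gradient on $U$.

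\textbf{Lower bound ($\mu$).} This is the main obstacle. The kinetic block is only positive semidefinite: it is rank one, with null direction $(\rho,m)$ along rays. We therefore need curvature in the flux variables, which we get by splitting the kinetic block.

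We write $m^2/(2c\rho)$ as half of itself plus half of itself. For half, we use the PSD perspective Hessian. For the other half, we note that its Hessian $\frac{1}{2c\rho}(\cdot)$ has $mm$-entry $1/(2c\rho)\ge 1/(2c_{\max}\rho_{\max})$, but the off-diagonal terms must be absorbed. So it is more robust to argue as follows: for a $2\times2$ block $H_g$, add $\delta$ times the density curvature. That is, $H_g+\mathrm{diag}(a,0)$ with $a\ge\gamma_I/\rho_{\max}$ split among the (at most two per sign) kinetic terms using $\rho^{j+1}_i$.

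The key claim is then
\[
\begin{pmatrix} m^2/(c\rho^3)+a & -m/(c\rho^2)\\ -m/(c\rho^2) & 1/(c\rho)\end{pmatrix}
\]
is positive definite. Its determinant is $a/(c\rho)>0$, its trace is positive, and its smallest eigenvalue is at least $\det/\mathrm{tr}$. That quotient is bounded below uniformly using $\rho\in[\rho_{\min},\rho_{\max}]$, $|m|\le m_{\max}$, and $c\in[c_{\min},c_{\max}]$.

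This is exactly where $\gamma_I>0$ is needed: with $\gamma_I=0$ the block would be singular. The terminal slice receives additional KL curvature, which is harmless. Each $\rho^{k}_i$ appears in the kinetic terms of index $j=k-1$ only, for both the $+$ and $-$ fluxes, so we allocate $a=\gamma_I/(2\rho_{\max})$ to each. Every $m$ variable then sits in exactly one such block, and the full Hessian dominates a block-diagonal matrix with all blocks $\succeq\mu I$. Note that flux slots masked out by the boundary are absent from $x$.

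\textbf{Conclusion.} Strong convexity of $\cE_\theta$ on the convex set $U$ follows from $\nabla^2\cE_\theta\succeq\mu I$ via the integral form of Taylor's theorem. Adding the convex indicator $I_C$ preserves the inequality $f(y)\ge f(x)+\langle g,y-x\rangle+\frac{\mu}{2}\|y-x\|^2$ for subgradients $g$ at points of $U\cap C$, so $f_\theta$ is $\mu$-strongly convex on $U$ in the extended-valued sense. All constants depend only on $\rho_{\min}$, $\rho_{\max}$, $m_{\max}$, $c_{\min}$, $c_{\max}$, $\gamma_I$, $\gamma_T$ and the mesh, hence are uniform in $\theta\in\Theta_0$.
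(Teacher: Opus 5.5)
Your proposal is correct and follows essentially the paper's argument. In both, the per-cell Hessian is computed explicitly, and the entropy curvature $\gamma_I/\rho$ removes the rank deficiency of the perspective term along the scaling ray; the lower eigenvalue is then bounded via $\lambda_{\min}\ge\det/\operatorname{tr}$ on a $2\times2$ matrix, with a trace/Gershgorin bound for $L_f$. The only difference is how the $3\times3$ cell block is handled: you split it into two overlapping $(\rho,m^\pm)$ blocks that share the density curvature, whereas the paper decomposes the flux direction into components parallel and orthogonal to $w=m/\rho$ and applies the same bound on the single coupled plane. Your overlapping blocks give $\succeq\mu'\operatorname{diag}(2,1,1)\succeq\mu' I$; the sum is not literally block-diagonal as you phrase it, though the conclusion is the same.
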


The constants follow from per-cell Hessian bounds and the separability of $\cE_\theta$ across space-time cells; see appendix~\ref{app:scvx_proof}.
In particular, the entropy term both supports density interiority and supplies curvature in the density variables.
This curvature is used below to eliminate nonzero primal kernel directions and may degenerate when $\gamma_I=0$.

\subsection{Directional Stability Without Flux Strict Complementarity}
\label{subsec:nonsmooth}

Although the density remains interior under Assumption~\ref{asm:forward_regularity}, flux variables may lie on the boundary of the orthant constraint.
At a degenerate flux slot, the variable is active while its multiplier vanishes, so the active set need not remain locally constant and the PDHG map need not possess a single classical Jacobian.
We therefore analyze its directional behavior through the graphical derivative of the normal cone.

Fix an equilibrium $z_0=(x_0,\phi_0)$ at $\theta_0$, and define the normal-cone multiplier
\[
v_0:=-\nabla\cE_{\theta_0}(x_0)-K^\top\phi_0\in N_C(x_0),
\]
where $C=\{\rho\ge0,\,m^+\ge0,\,m^-\le0\}$ is the polyhedral orthant product of~\eqref{eq:disc_objective} and $N_C$ its normal-cone map.
Since $C$ is polyhedral, the graphical derivative of $N_C$ at $(x_0,v_0)$ is characterized by the \emph{critical cone}~\cite{rockafellarwets1998, dontchev2014implicit}
\[
\mathcal{K}:=\mathcal{T}_C(x_0)\cap v_0^{\perp}.
\]
Coordinatewise, $\mathcal{K}$ equals $\mathbb{R}$ at inactive slots, $\{0\}$ at strongly active slots, and a half-line at degenerate slots.
Thus $\mathcal{K}$ fails to be a subspace precisely when a degenerate boundary slot is present.

The graphical derivative of the KKT operator $\cA_{\theta_0}$ at $(z_0,0)$ is the set-valued, positively homogeneous operator
\begin{equation}
D\cA(h)
=
\begin{pmatrix}
\nabla^2\cE_{\theta_0}(x_0)\,h_x+K^\top h_\phi+N_{\mathcal{K}}(h_x)\\[1mm]
-K h_x
\end{pmatrix},
\qquad h=(h_x,h_\phi),
\label{eq:DA_def}
\end{equation}
and the \emph{derivative resolvent} is $\Rz:=(I+\cM^{-1}D\cA)^{-1}$.
The theorem below shows that $\Rz$ governs the first-order directional response of one PDHG step at $z_0$.
For $r\ge1$ define
\begin{equation}
q_r:=\sup_{\|h\|_{\cM}=1}\|\Rz^{\,r}(h)\|_{\cM}.
\label{eq:qr_def}
\end{equation}

\begin{theorem}[Directional Stability Without Flux Strict Complementarity]
\label{thm:nonsmooth_kernel}
Suppose Assumptions~\ref{asm:pdhg_stepsizes} and~\ref{asm:forward_regularity} hold at an equilibrium $z_0$ of $\cA_{\theta_0}$, with no assumption on the flux active set.
Then:
\begin{enumerate}
\item $D\cA$ is maximal monotone; $\Rz$ is everywhere defined, single-valued, positively homogeneous, and firmly nonexpansive in $\langle\cdot,\cdot\rangle_{\cM}$, with $\operatorname{Fix}\Rz=\ker D\cA:=\{h:0\in D\cA(h)\}$; and the PDHG map is semi-differentiable (\`{a} la Rockafellar) at $z_0$:
    \[
    T_{\theta_0}(z_0+u)\;=\;z_0+\Rz(u)+o(\|u\|).
    \]
\item  $\ker D\cA=\{0\}$.
\item Consequently $\Rz^n(h)\to0$ for every $h$, and with $q_r$ as in~\eqref{eq:qr_def} one has $q_r\le1$ for every $r$, so the (M)-norm does not increase at any depth, and there is a finite $r_*\ge1$ with
    \[
    q_{r_*}<1,
    \qquad
    \|\Rz^n(h)\|_{\cM}\le q_{r_*}^{\lfloor n/r_*\rfloor}\,\|h\|_{\cM}
    \quad\text{for all } n\ge0.
    \]
\end{enumerate}
\end{theorem}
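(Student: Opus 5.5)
I would prove the three parts in order: first establish the structure of $D\cA$ and its link to $T_{\theta_0}$, then rule out nontrivial kernel directions using curvature, and finally turn pointwise decay into a uniform rate via compactness.

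\textbf{Part 1.}
I would split $D\cA$ into three pieces:
\[
D\cA=\begin{pmatrix}\nabla^2\cE_{\theta_0}(x_0)&K^\top\\-K&0\end{pmatrix}+\begin{pmatrix}N_{\mathcal K}\\0\end{pmatrix}.
\]
\begin{itemize}
\item The first summand is a full-domain linear monotone map. It is monotone because $\nabla^2\cE_{\theta_0}(x_0)\succeq\mu I$ by Lemma~\ref{lemma:f_theta_scvx}, and the off-diagonal part is skew.
\item The second summand is $\partial I_{\mathcal K\times\mathbb R^{N_c}}$, the subdifferential of the indicator of a closed convex cone, so it is maximal monotone.
\end{itemize}
The sum theorem \cite[Cor.~25.5]{bauschke2017convex} then gives maximal monotonicity of $D\cA$, exactly as in Proposition~\ref{prop:maximal_monotone}. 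Corollary~\ref{cor:resolvent_facts}, applied to $D\cA$ in place of $\cA_\theta$, yields that $\Rz$ is everywhere defined, single-valued and $\cM$-firmly nonexpansive, with $\operatorname{Fix}\Rz=\ker D\cA$. Positive homogeneity of $\Rz$ follows because $D\cA$ is positively homogeneous, since $N_{\mathcal K}$ is a cone-valued map on a cone.

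For semi-differentiability, Theorem~\ref{thm:pdhg_resolvent} lets me write $T_{\theta_0}=(I+\cM^{-1}\cA_{\theta_0})^{-1}$. The only nonsmooth ingredient is then $\prox$ onto the polyhedral set $C$ combined with the $C^2$ function $\cE_{\theta_0}$. I would invoke the standard result that solution maps of strongly monotone variational inequalities over polyhedral sets are semi-differentiable \cite{dontchev2014implicit,rockafellarwets1998}. Their directional derivative solves the linearized inclusion, whose graphical derivative is $D\cA$ via the reduction lemma $T_{\operatorname{gph}N_C}(x_0,v_0)=\operatorname{gph}N_{\mathcal K}$. The per-cell decoupling makes this concrete: each cell is a strongly convex prox over an orthant.

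\textbf{Part 2.}
Suppose $0\in D\cA(h)$. Pairing the first row with $h_x$ and the second with $h_\phi$ gives
\[
\langle\nabla^2\cE h_x,h_x\rangle+\langle K^\top h_\phi,h_x\rangle+\langle w,h_x\rangle-\langle Kh_x,h_\phi\rangle=0,
\qquad w\in N_{\mathcal K}(h_x).
\]
Since $h_x\in\mathcal K$ and $\mathcal K$ is a cone, $\langle w,h_x\rangle=0$. The two coupling terms cancel, leaving $\langle\nabla^2\cE h_x,h_x\rangle=0$, so $\mu\|h_x\|^2\le0$ and $h_x=0$.

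With $h_x=0$ we have $N_{\mathcal K}(0)=\mathcal K^\circ$, and the first row becomes $K^\top h_\phi\in-\mathcal K^\circ$. This is the one step that is not routine. It is \emph{not} enough to cite full row rank of $K$, because the polar cone term could absorb $K^\top h_\phi$.

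My plan to handle it:
\begin{itemize}
\item Density slots are inactive, since the density is interior by Assumption~\ref{asm:forward_regularity}. There $\mathcal K=\mathbb R$, so $\mathcal K^\circ=\{0\}$ and the $\rho$-block of $K^\top h_\phi$ must vanish.
\item The $\rho$-block of $K^\top$ is $D_t^\top$ acting on $h_\phi$ (the temporal difference part). I expect it to be injective, because $\rho^0$ is fixed, $D_t$ has full row rank, and $B_t=D_tD_t^\top\succ0$.
\item Injectivity then forces $h_\phi=0$.
\end{itemize}
I expect this to be the main obstacle, and I would check it carefully against the explicit $K$ in appendix~\ref{app:K_structure}.

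\textbf{Part 3.}
Since $\Rz$ is $\cM$-firmly nonexpansive with $\operatorname{Fix}\Rz=\{0\}$, the Krasnosel'ski\u{\i}--Mann/Opial argument gives $\Rz^n(h)\to0$ for every $h$. Nonexpansiveness together with $\Rz(0)=0$ gives $q_r\le1$.

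To get a uniform rate, I would use three facts:
\begin{itemize}
\item The unit $\cM$-sphere is compact.
\item Each $\Rz^r$ is Lipschitz, with constant at most $1$.
\item The sequence $\|\Rz^n h\|_{\cM}$ is nonincreasing in $n$.
\end{itemize}
For each $h$ on the sphere there is an $n_h$ with $\|\Rz^{n_h}h\|_{\cM}<1/2$. By continuity this bound holds on a neighborhood of $h$, and by monotonicity it persists for all larger $n$. A finite subcover then gives a single $r_*$ with $q_{r_*}\le 1/2<1$.

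Finally, positive homogeneity gives submultiplicativity, $q_{a+b}\le q_aq_b$. Combined with $q_r\le1$ for the remainder steps, this yields
\[
\|\Rz^n h\|_{\cM}\le q_{r_*}^{\lfloor n/r_*\rfloor}\,\|h\|_{\cM}\qquad\text{for all } n\ge0.
\]
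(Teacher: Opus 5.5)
Your proposal is correct, and for most of the theorem it matches the paper's proof:
\begin{itemize}
\item the same splitting $D\cA=G_0+\partial I_{\mathcal K\times\mathbb R^{N_c}}$ with the sum theorem;
\item the same kernel argument (use $\langle w,h_x\rangle=0$ and strong convexity to get $h_x=0$, then read off the density rows, where the polar of the critical cone vanishes, to get $D_t^\top h_\phi=0$);
\item the same pointwise-to-uniform upgrade, since your finite-subcover argument is Dini's theorem written out, with submultiplicativity of $q_r$ coming from positive homogeneity.
\end{itemize}

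The one genuinely different step is semi-differentiability of $T_{\theta_0}$. You cite the Robinson/Dontchev--Rockafellar semidifferentiability theorem for polyhedral variational inequalities. The paper proves it directly:
\begin{itemize}
\item it substitutes $z^+=z_0+s$ into the resolvent inclusion;
\item it uses the equilibrium identities, a Taylor remainder for $\nabla\cE_{\theta_0}$, and the exact local reduction $\mathrm{gph}\,N_C\cap\mathcal U=[(x_0,v_0)+\mathrm{gph}\,N_{\mathcal K}]\cap\mathcal U$ to get $\cM(s-u)+D\cA(s)+(\rho(s_x),0)\ni0$;
\item it compares this with $\cM(\Rz(u)-u)+D\cA(\Rz(u))\ni0$ using monotonicity of $D\cA$, which gives $\|s-\Rz(u)\|_{\cM}=o(\|u\|)$.
\end{itemize}
In that argument, the bound $\|s\|_{\cM}\le\|u\|_{\cM}$ keeps the selected normal-cone element inside $\mathcal U$. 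The paper's route is self-contained and reuses the monotonicity already established. Yours is shorter but shifts the work to checking the cited theorem's hypotheses.

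To make your citation airtight, apply it to the full resolvent inclusion $\cM(z^+-z)+G(z^+)+N_{C\times\mathbb R^{N_c}}(z^+)\ni0$, with $z$ as the parameter and $G(z^+)=(\nabla\cE_{\theta_0}(x^+)+K^\top\phi^+,\,b-Kx^+)$. Then check three things:
\begin{itemize}
\item $G$ is $C^1$ near $z_0$, by density interiority;
\item the linearization $\cM+\nabla G(z_0)$ has symmetric part $\succeq\cM\succ0$;
\item the critical cone is $\mathcal K\times\mathbb R^{N_c}$, because $-G(z_0)=(v_0,0)$.
\end{itemize}
Also note that the theorem's local single-valued localization coincides with the globally single-valued, Lipschitz map $T_{\theta_0}$. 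If you apply the result only to the primal prox, as your wording suggests, you must additionally redo the block computation of Theorem~\ref{thm:pdhg_resolvent} for the linearized problem to identify the composed semiderivative with $\Rz(u)$.
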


Part~(3) gives a strict norm reduction relative to the fixed point after finite grouping; no Lipschitz contraction estimate between arbitrary inputs follows.

To prove that the kernel is trivial, let $w\in N_{\mathcal{K}}(h_x)$ be the normal-cone term in the primal kernel equation and pair that equation with $h_x$.
The equality constraint gives $Kh_x=0$, while the cone relation gives $\langle w,h_x\rangle=0$; strong convexity therefore forces $h_x=0$.
Density interiority then makes the density slots of the critical cone unconstrained, so the density block of the remaining kernel equation gives $D_t^\top h_\phi=0$.
Since $D_t^\top$ is injective, $h_\phi=0$.
Firm nonexpansiveness and the trivial fixed-point set yield pointwise decay of $\Rz^n h$; positive homogeneity, continuity, and compactness of the unit sphere upgrade this to a uniform strict decrease after a finite number of iterations.
The complete proof is given in appendix~\ref{app:nonsmooth_kernel}.

At a flux active-set kink, $\Rz$ is generally piecewise linear rather than linear, so the theorem does not provide a classical parameter-sensitivity formula.
The result is local to the equilibrium and concerns iterations of the derivative map; the nonlinear PDHG trajectory need not remain inside $U$.

\section{Local Sensitivity and JFB Consistency}
\label{sec:sensitivity}

Section~\ref{sec:structure} established directional stability without requiring classical differentiability.
To compare JFB-$r$ with implicit differentiation, we now assume that the flux active set is locally fixed near a base equilibrium, so the reduced KKT system and the tracked PDHG steps are $C^1$ there.
We first identify the finite-trajectory surrogate differentiated by JFB-$r$, and then show that, under exact equilibrium detachment, its sensitivity converges to the exact implicit gradient as the tracked depth increases.
Throughout, gradients and descent pairings in $\theta$ use the Euclidean inner product on the parameter space; the metric $\cM$ acts only on the primal-dual space.

\subsection{Differentiability Under a Locally Fixed Active Set}
\label{subsec:smooth_bridge}

\begin{assumption}[Locally fixed active set]
\label{asm:local_chart}
Fix a base parameter $\theta_0$ and an equilibrium $z_0=(x_0,\phi_0)$ satisfying $0\in\cA_{\theta_0}(z_0)$, with $x_0$ in the neighborhood $U$ of Assumption~\ref{asm:forward_regularity}.
Assume that there exist a neighborhood of $(\theta_0,z_0)$ and a fixed set $\mathcal A$ of active flux coordinates such that, along the local KKT and proximal branches through $(\theta_0,z_0)$, the coordinates in $\mathcal A$ remain active with strictly complementary multipliers, while the remaining flux coordinates remain inactive.
After eliminating the coordinates in $\mathcal A$ and reusing $z$ for the retained variables, the local KKT conditions are equivalent to a $C^1$ reduced residual equation $\mathcal F_{\mathcal A}(\theta,z)=0$, formed by the retained stationarity equations together with all equality constraints.
\end{assumption}

The density coordinates are retained because they are bounded away from the orthant boundary on $U$.
Assumption~\ref{asm:local_chart} constrains only the local KKT structure; the data misfit enters separately.

Assumption~\ref{asm:local_chart} is restrictive for the flux variables.
In the upwind split used here, many flux components lie on the orthant boundary.
At strongly active slots, strict complementarity keeps that slot on a locally constant branch, on which the per-cell prox is $C^1$; at degenerate boundary slots the flux prox is not classically differentiable, and autodiff follows the generalized-derivative convention selected by the implementation.
The diagnostics in section~\ref{sec:experiments} compare directions produced by that same convention.
Across active-set changes we rely only on the stability dichotomy of Theorem~\ref{thm:nonsmooth_kernel}; a Bouligand/Clarke theory for such changes is beyond our scope.

In the reduced coordinates, let $K$ denote the retained constraint matrix and set
\begin{equation}
H:=D_z\mathcal F_{\mathcal A}(\theta_0,z_0)
=
\begin{pmatrix}
Q & K^\top\\
-K & 0
\end{pmatrix},
\qquad
Q:=\nabla^2\cE_{\theta_0}(x_0),
\qquad
B:=D_\theta\mathcal F_{\mathcal A}(\theta_0,z_0).
\label{eq:H_block}
\end{equation}
Because the density variables are never eliminated, the reduced matrix $K$ retains the temporal block $D_t$ and therefore has full row rank (appendix~\ref{app:K_structure}).
We continue to denote the reduced metric by $\cM$; the projection notation is suppressed and recorded in appendix~\ref{app:consistency_proof}.

\begin{lemma}[Reduced KKT invertibility and sensitivity]
\label{lem:H_invertible}
Under Assumptions~\ref{asm:forward_regularity} and~\ref{asm:local_chart}, the matrix $H$ of~\eqref{eq:H_block} is monotone and invertible.
Consequently, within the fixed-active-set regime of Assumption~\ref{asm:local_chart}, the equilibrium admits a locally unique $C^1$ branch $\theta\mapsto z_\theta^\star$ near $\theta_0$, with
\begin{equation}
D_\theta z_{\theta_0}^\star=-H^{-1}B.
\label{eq:true_sensitivity}
\end{equation}
\end{lemma}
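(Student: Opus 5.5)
Monotonicity comes first and is a direct computation. For $h=(h_x,h_\phi)$ in the reduced coordinates,
\[
\langle Hh,h\rangle=\langle Qh_x,h_x\rangle+\langle K^\top h_\phi,h_x\rangle-\langle Kh_x,h_\phi\rangle=\langle Qh_x,h_x\rangle ,
\]
since the off-diagonal blocks are skew. Lemma~\ref{lemma:f_theta_scvx} gives $Q=\nabla^2\cE_{\theta_0}(x_0)\succeq\mu I$ on the full primal space. The reduced Hessian is a principal submatrix of it, obtained by deleting the active flux coordinates in $\mathcal A$, so it is still $\succeq\mu I$. Hence $\langle Hh,h\rangle\ge\mu\|h_x\|^2\ge0$, and $H$ is monotone.

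For invertibility we show $\ker H=\{0\}$, which suffices because $H$ is square. Take $Hh=0$. The monotonicity identity gives $0=\langle Hh,h\rangle\ge\mu\|h_x\|^2$, so $h_x=0$. The first block row then becomes $K^\top h_\phi=0$. The reduced $K$ retains every density coordinate and so contains the temporal block $D_t$, which has full row rank (appendix~\ref{app:K_structure}). Therefore $K^\top$ is injective and $h_\phi=0$. This mirrors the kernel argument of Theorem~\ref{thm:nonsmooth_kernel}(2), with the critical cone now replaced by a subspace. The step needing the most care is confirming that deleting the coordinates in $\mathcal A$ removes only flux columns of $K$. We then restrict $K^\top h_\phi=0$ to the density rows, which gives $D_t^\top h_\phi=0$. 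Injectivity of $D_t^\top$ holds because $\rho^0$ is fixed, so $D_t$ is a full-row-rank lower bidiagonal difference operator.

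For the sensitivity, Assumption~\ref{asm:local_chart} makes $\mathcal F_{\mathcal A}$ a $C^1$ map with $\mathcal F_{\mathcal A}(\theta_0,z_0)=0$, and we have just shown that $D_z\mathcal F_{\mathcal A}(\theta_0,z_0)=H$ is invertible. The classical implicit function theorem therefore gives neighborhoods and a unique $C^1$ map $\theta\mapsto z_\theta^\star$ with $\mathcal F_{\mathcal A}(\theta,z^\star_\theta)=0$ and $z^\star_{\theta_0}=z_0$. Differentiating this identity gives $B+H\,D_\theta z^\star_{\theta_0}=0$, which is \eqref{eq:true_sensitivity}.

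Finally, within the fixed-active-set regime the reduced equation is equivalent to the KKT system. Thus these zeros are exactly the equilibria $0\in\cA_\theta(z)$ near $(\theta_0,z_0)$, with the eliminated active coordinates held at zero, and so the branch is locally unique as an equilibrium branch.
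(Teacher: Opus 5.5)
Your proposal is correct and follows the paper's proof essentially step for step. The steps match: skew cancellation gives monotonicity, strong convexity of the reduced (principal-submatrix) Hessian forces $h_x=0$, full row rank of the reduced $K$ through the retained $D_t$ block forces $h_\phi=0$, and the implicit function theorem gives the branch; for the final step the paper argues via strict complementarity and continuity, which justifies the same KKT equivalence you take directly from Assumption~\ref{asm:local_chart}.
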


\begin{proof}
\[
H+H^\top=
\begin{pmatrix}
2Q&0\\
0&0
\end{pmatrix}\succeq0 .
\]
If $H(h_x,h_\phi)=0$, then $Kh_x=0$, and pairing the first block equation with $h_x$ gives $h_x^\top Qh_x=0$.
The positive definiteness of $Q$ (Lemma~\ref{lemma:f_theta_scvx}) yields $h_x=0$, after which the full row rank of $K$ gives $h_\phi=0$.
The implicit function theorem then gives the stated local branch and sensitivity formula.
Strict complementarity and continuity keep this local solution branch in the same KKT regime, so it solves the full KKT system and not only the reduced residual equation.
\end{proof}

In particular, multiplier uniqueness follows from the full row rank of $K$ and need not be assumed.

Under Assumption~\ref{asm:local_chart}, the critical cone from section~\ref{subsec:nonsmooth} becomes the retained-coordinate subspace.
Consequently the directional KKT operator and derivative resolvent reduce, in these coordinates, to the classical linearizations used below.
The formal coordinate reduction is given in appendix~\ref{app:consistency_proof}.

\subsection{Surrogate and Implicit Gradients}
\label{subsec:exact_gradient}

We first consider an arbitrary detached base point $\bar z$, treated as independent of $\theta$ during differentiation.
For $r\ge1$, define the finite-trajectory surrogate
\begin{equation}
G_{r,\bar z}(\theta)
:=
\widehat\ell\bigl(T_\theta^{\,r}(\bar z)\bigr).
\label{eq:surrogate_r}
\end{equation}

\paragraph{Differentiable data misfit}
Wherever a loss gradient appears below, we assume in addition that $\ell$ is continuously differentiable at the predicted densities at which it is evaluated.
In particular, near the base equilibrium we assume that $\ell$ is continuously differentiable near $(P_\rho x_0,\rho^{\mathrm{obs}})$, so that the lifted loss $\widehat\ell$ of~\eqref{eq:lifted_loss} is $C^1$ near $z_0$, and we set $g:=\nabla_z\widehat\ell(z_0)$.
For the quadratic misfit in~\eqref{eq:quadratic_misfit}, this holds.

\begin{proposition}[JFB-$r$ as a finite-trajectory surrogate gradient]
\label{prop:jfbr_surrogate}
Suppose that $\theta\mapsto T_\theta^{\,r}(\bar z)$ is differentiable at $\theta_0$ and $\widehat\ell$ is differentiable at $T_{\theta_0}^{\,r}(\bar z)$.
Then
\begin{equation}
\nabla_\theta G_{r,\bar z}(\theta_0)
=
\Bigl(\partial_\theta T_\theta^{\,r}(\bar z)\big|_{\theta=\theta_0}\Bigr)^{\!\top}
\nabla_z\widehat\ell\bigl(T_{\theta_0}^{\,r}(\bar z)\bigr)
=
d_{r,\bar z}^{\mathrm{JFB}}(\theta_0).
\label{eq:surrogate_grad}
\end{equation}
Thus, whenever it is nonzero, $-d_{r,\bar z}^{\mathrm{JFB}}$ is a strict descent direction for $G_{r,\bar z}$.
\end{proposition}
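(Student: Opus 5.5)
The argument is a direct application of the chain rule to the composition defining the surrogate~\eqref{eq:surrogate_r}, followed by the standard first-order descent argument. The one structural point is that the detached base point $\bar z$ is held fixed, independent of $\theta$. Hence $G_{r,\bar z}=\widehat\ell\circ\Phi$, where $\Phi(\theta):=T_\theta^{\,r}(\bar z)$ is a map from $\mathbb{R}^p$ to $\mathbb{R}^{n+N_c}$. There is no implicit dependence through a fixed-point equation, and no terms arise from $\partial\bar z/\partial\theta$.

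\emph{Step 1 (chain rule).} By hypothesis, $\Phi$ is (Fréchet) differentiable at $\theta_0$ with Jacobian $J:=\partial_\theta T_\theta^{\,r}(\bar z)|_{\theta=\theta_0}\in\mathbb{R}^{(n+N_c)\times p}$. Also, $\widehat\ell$ is differentiable at $\Phi(\theta_0)=T_{\theta_0}^{\,r}(\bar z)$. The chain rule for Fréchet derivatives in finite dimensions then gives
\[
DG_{r,\bar z}(\theta_0)=D\widehat\ell\bigl(T_{\theta_0}^{\,r}(\bar z)\bigr)\,J .
\]
Taking the Riesz representative with respect to the Euclidean inner product on $\mathbb{R}^p$, as fixed at the start of this section, yields $\nabla_\theta G_{r,\bar z}(\theta_0)=J^\top\nabla_z\widehat\ell(T_{\theta_0}^{\,r}(\bar z))$. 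This is verbatim the definition~\eqref{eq:jfb_r_def} of $d_{r,\bar z}^{\mathrm{JFB}}(\theta_0)$, which proves~\eqref{eq:surrogate_grad}.

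Note that $\nabla_z\widehat\ell$ is the Euclidean gradient, not the $\cM$-weighted one. The statement is consistent with this, because~\eqref{eq:jfb_r_def} uses the same gradient. Nothing about the resolvent structure of Theorem~\ref{thm:pdhg_resolvent} is needed here, and the argument holds for any differentiable tracked map, including $\widetilde T_\theta$.

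\emph{Step 2 (descent).} Let $d:=d_{r,\bar z}^{\mathrm{JFB}}(\theta_0)\neq0$. Differentiability of $G_{r,\bar z}$ at $\theta_0$ gives, for $t>0$,
\[
G_{r,\bar z}(\theta_0-td)=G_{r,\bar z}(\theta_0)-t\|d\|_2^2+o(t).
\]
Hence $G_{r,\bar z}(\theta_0-td)<G_{r,\bar z}(\theta_0)$ for all sufficiently small $t>0$, so $-d$ is a strict descent direction. Equivalently, the directional derivative along $-d$ is $-\|d\|^2<0$.

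\emph{Main obstacle.} There is no substantive difficulty. The only point requiring care is to make explicit that the hypotheses are exactly what the chain rule needs. Differentiability of the \emph{composite} map $\theta\mapsto T^r_\theta(\bar z)$ is assumed directly, so there is no need to argue separately that each intermediate step $T_\theta$ is jointly differentiable along the trajectory. One should also remark that mere directional (semi-)differentiability, as in Theorem~\ref{thm:nonsmooth_kernel}, would not suffice for the gradient identity; this is why the hypothesis is stated as differentiability.
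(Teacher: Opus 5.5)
Your proposal is correct and matches the paper's proof, which is exactly the chain rule applied to $\theta\mapsto\widehat\ell\bigl(T_\theta^{\,r}(\bar z)\bigr)$ with $\bar z$ held fixed. Your first-order expansion $G_{r,\bar z}(\theta_0-td)=G_{r,\bar z}(\theta_0)-t\|d\|_2^2+o(t)$ just spells out the descent conclusion that the paper leaves implicit.
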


\begin{proof}
This follows directly from the chain rule with $\bar z$ held fixed.
\end{proof}

The identity and its proof hold verbatim for any differentiable parameterized map in place of $T_\theta$, in particular for the implemented $\widetilde T_\theta$ of Algorithm~\ref{alg:jfb_pdhg}; the experiments use it in this form.

Assumption~\ref{asm:local_chart} supplies a sufficient condition for this differentiability.
If all $r$ tracked proximal solves remain in the fixed-active-set neighborhood \emph{and} select the same active set $\mathcal A$, each step satisfies
\[
\cM(z^+-z)+\mathcal F_{\mathcal A}(\theta,z^+)=0 .
\]
Its derivative with respect to $z^+$ has positive-definite symmetric part, so the implicit function theorem makes each tracked step, and hence their composition, $C^1$.
A tracked solve that lands near $z_0$ but selects a different active set follows a different local branch, so proximity alone does not guarantee that the fixed-active-set argument applies.

Detaching $\bar z$ removes its dependence on $\theta$, whereas the equilibrium state $z_\theta^\star$ varies implicitly with the parameter, so descent for $G_{r,\bar z}$ does not transfer to the true inverse objective $F$.
We next compare these two sensitivities at an equilibrium.

By Lemma~\ref{lem:H_invertible} and the chain rule, the exact equilibrium gradient is
\begin{equation}
\nabla_\theta F(\theta_0)
=
\bigl(-H^{-1}B\bigr)^\top g
=
-B^\top H^{-\top}g .
\label{eq:implicit_gradient}
\end{equation}

\subsection{Sensitivity Recurrence and Large-Depth Consistency}
\label{subsec:consistency}

To compare JFB-$r$ directly with the implicit gradient, we now take the detached base point to be the exact equilibrium, $\bar z=z_0=z_{\theta_0}^\star$.
Since $T_{\theta_0}(z_0)=z_0$ (Corollary~\ref{cor:resolvent_facts}(3)), every tracked iterate at the base parameter equals $z_0$, and the same state and parameter linearizations govern all $r$ steps.

In the reduced coordinates, define
\begin{equation}
L:=(\cM+H)^{-1}\cM,
\qquad
P:=-(\cM+H)^{-1}B .
\label{eq:LP_def}
\end{equation}
The matrices are well defined because the symmetric part of $\cM+H$ is bounded below by $\cM\succ0$.
Moreover
\[
L=D_zT_{\theta_0}(z_0),
\qquad
P=\left.\partial_\theta T_\theta(z_0)\right|_{\theta=\theta_0},
\]
so $L$ is the classical fixed-active-set specialization of the derivative resolvent from section~\ref{sec:structure}.

\begin{proposition}[Sensitivity recurrence at an exact detach point]
\label{prop:side_by_side}
Under Assumptions~\ref{asm:pdhg_stepsizes}, \ref{asm:forward_regularity}, and~\ref{asm:local_chart} at $(\theta_0,z_0)$ with $\bar z=z_0=z_{\theta_0}^\star$, the tracked sensitivities $u_k:=\left.\partial_\theta T_\theta^{\,k}(z_0)\right|_{\theta=\theta_0}$ satisfy
\[
u_0=0,
\qquad
u_{k+1}=Lu_k+P,
\]
and hence
\begin{equation}
u_r=\sum_{k=0}^{r-1}L^kP,
\qquad
d_r^{\mathrm{JFB}}
=
P^\top\sum_{k=0}^{r-1}(L^\top)^k g .
\label{eq:jfb_closed_form}
\end{equation}
In addition,
\[
\operatorname{Fix}(L)=\ker H=\{0\},
\qquad
\spr(L)<1 .
\]
\end{proposition}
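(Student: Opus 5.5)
The proof splits into three parts: the chain-rule recurrence, the identification of the fixed points of $L$, and the spectral radius bound.

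\emph{Recurrence.} Write $T^{k+1}_\theta(z_0)=T_\theta\bigl(T^k_\theta(z_0)\bigr)$ and differentiate in $\theta$ at $\theta_0$. Since $T_{\theta_0}(z_0)=z_0$ by Corollary~\ref{cor:resolvent_facts}(3), every inner iterate at the base parameter equals $z_0$. The chain rule then gives
\[
u_{k+1}=D_zT_{\theta_0}(z_0)\,u_k+\partial_\theta T_\theta(z_0)\big|_{\theta_0}=Lu_k+P,
\]
and $u_0=0$ because $T^0_\theta(z_0)=z_0$ is independent of $\theta$. Differentiability of each step comes from the fixed-active-set argument after Proposition~\ref{prop:jfbr_surrogate}.

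To justify $L$ and $P$, implicitly differentiate the reduced step equation $\cM(z^+-z)+\mathcal F_{\mathcal A}(\theta,z^+)=0$ at $(\theta_0,z_0,z_0)$. This gives $(\cM+H)\,dz^+=\cM\,dz-B\,d\theta$, which identifies $L=(\cM+H)^{-1}\cM$ and $P=-(\cM+H)^{-1}B$. Unrolling the recurrence gives $u_r=\sum_{k<r}L^kP$. Since $T^r_{\theta_0}(z_0)=z_0$, the gradient $\nabla_z\widehat\ell$ is evaluated at $z_0$, where it equals $g$. Transposing in~\eqref{eq:jfb_r_def} then yields $d_r^{\mathrm{JFB}}=P^\top\sum_k(L^\top)^kg$.

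\emph{Fixed points.} $Lh=h$ is equivalent to $\cM h=(\cM+H)h$, i.e.\ $Hh=0$. By Lemma~\ref{lem:H_invertible}, $\ker H=\{0\}$, so $\operatorname{Fix}(L)=\{0\}$.

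\emph{Spectral radius.} This is the main step, since $1\notin\sigma(L)$ alone does not exclude other unimodular eigenvalues. Let $Lh=\lambda h$ with $h\in\mathbb{C}^N\setminus\{0\}$, and set $w=\cM h$. Then $(\cM+H)\lambda h=\cM h$, so $\lambda\neq0$ and $Hh=(\lambda^{-1}-1)\cM h$. Pairing with $h^*$ and taking real parts, using $\operatorname{Re}h^*Hh=h_x^*Qh_x\ge0$, gives
\[
\operatorname{Re}(\lambda^{-1})-1=\frac{h_x^*Qh_x}{h^*\cM h}\ge0.
\]
If $|\lambda|\ge1$, then $\operatorname{Re}(\lambda^{-1})\le|\lambda|^{-1}\le1$. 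Both inequalities must then be equalities, forcing $\lambda=1$ and $h_x^*Qh_x=0$. This contradicts $\operatorname{Fix}(L)=\{0\}$. Hence every eigenvalue satisfies $|\lambda|<1$, and $\spr(L)<1$.

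Alternatively, $L$ is the reduced specialization of $\Rz$ (appendix~\ref{app:consistency_proof}), so Theorem~\ref{thm:nonsmooth_kernel}(3) gives $\|L^{r_*}\|_\cM<1$ and hence $\spr(L)<1$. I would present the eigenvalue argument as primary, since it is self-contained.

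The only delicate point is checking that the reduced-coordinate $\cM$ is still positive definite and that $Q$ is the reduced Hessian. Both follow by compressing $\cM$ and $\nabla^2\cE_{\theta_0}$ to the retained coordinates, using Lemma~\ref{lemma:f_theta_scvx}.
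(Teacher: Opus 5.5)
Your proposal is correct and follows essentially the same route as the paper. The recurrence comes from the chain rule at the fixed point, with $L,P$ read off from the differentiated reduced resolvent equation. $\operatorname{Fix}(L)=\ker H$ follows from $(\cM+H)h=\cM h$. The bound $\spr(L)<1$ follows by pairing the eigen-relation $Hh=c\,\cM h$ with $h^*$ and using $\operatorname{Re}(h^*Hh)=h_x^*Qh_x\ge0$; the paper phrases this as $|\mu|=|1+c|^{-1}\le1$, with equality only at $c=0$, which is excluded by $\ker H=\{0\}$. Your alternative via Theorem~\ref{thm:nonsmooth_kernel}(3), restricted to the invariant subspace $V$, is also noted in the paper after Corollary~\ref{cor:finite_depth}, but the eigenvalue argument is the one it actually uses.
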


The recurrence follows by differentiating each tracked step at the fixed point; it concerns first-order sensitivities and does not describe the nonlinear PDHG iterates.
The spectral conclusion follows from the monotonicity and invertibility of $H$; the matrix proof is given in appendix~\ref{app:consistency_proof}.

Equivalently, let $w_0=0$ and define
\begin{equation}
\cM(w_{k+1}-w_k)+H^\top w_{k+1}-g=0 .
\label{eq:adjoint_pp}
\end{equation}
Then
\[
d_r^{\mathrm{JFB}}=-B^\top w_r,
\qquad
w_r\longrightarrow H^{-\top}g .
\]
Thus, under exact equilibrium detachment, JFB-$r$ can be interpreted as performing $r$ metric-preconditioned iterations toward the implicit adjoint, without explicitly solving the adjoint system.
A quantitative alignment statement for these adjoint iterates, together with the identity isolating the final compression through $BB^\top$, is given in appendix~\ref{app:adjoint_alignment}; $\cM$-alignment of the adjoints need not transfer to the parameter gradients.

\begin{theorem}[Large-depth consistency under exact equilibrium detachment]
\label{thm:jfb_consistency}
Under Assumptions~\ref{asm:pdhg_stepsizes}, \ref{asm:forward_regularity}, and~\ref{asm:local_chart} at $(\theta_0,z_0)$ and exact equilibrium detachment $\bar z=z_0=z_{\theta_0}^\star$,
\[
d_r^{\mathrm{JFB}}(\theta_0)
\longrightarrow
\nabla_\theta F(\theta_0)
\qquad\text{as }r\to\infty .
\]
More precisely,
\[
d_r^{\mathrm{JFB}}-\nabla_\theta F(\theta_0)
=
-P^\top(L^\top)^r(I-L^\top)^{-1}g .
\]
If $\nabla_\theta F(\theta_0)\neq0$, then there exists $r_0<\infty$ such that $-d_r^{\mathrm{JFB}}$ is a strict descent direction for $F$ for every $r\ge r_0$.
\end{theorem}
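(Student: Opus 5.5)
The proof starts from the closed form of Proposition~\ref{prop:side_by_side}: $d_r^{\mathrm{JFB}}=P^\top\sum_{k=0}^{r-1}(L^\top)^k g$ with $\spr(L)<1$. Since $\spr(L^\top)=\spr(L)<1$, the Neumann series $\sum_{k\ge0}(L^\top)^k$ converges to $(I-L^\top)^{-1}$, and the finite-sum identity gives
\[
\sum_{k=0}^{r-1}(L^\top)^k=(I-(L^\top)^r)(I-L^\top)^{-1}.
\]
It follows that
\[
d_r^{\mathrm{JFB}}=P^\top(I-L^\top)^{-1}g-P^\top(L^\top)^r(I-L^\top)^{-1}g .
\]
The task therefore reduces to identifying the limit term $P^\top(I-L^\top)^{-1}g$ with $\nabla_\theta F(\theta_0)=-B^\top H^{-\top}g$ from~\eqref{eq:implicit_gradient}.

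For this identification we use the definitions~\eqref{eq:LP_def}. First, $I-L=(\cM+H)^{-1}\bigl((\cM+H)-\cM\bigr)=(\cM+H)^{-1}H$, which is invertible because $H$ is invertible by Lemma~\ref{lem:H_invertible}. Hence $(I-L)^{-1}=H^{-1}(\cM+H)$, and
\[
(I-L)^{-1}P=-H^{-1}(\cM+H)(\cM+H)^{-1}B=-H^{-1}B .
\]
Transposing gives $P^\top(I-L^\top)^{-1}=\bigl((I-L)^{-1}P\bigr)^\top=-B^\top H^{-\top}$, so $P^\top(I-L^\top)^{-1}g=\nabla_\theta F(\theta_0)$. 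This yields the stated error identity. Equivalently, $\sum_{k\ge0}L^kP$ is the fixed point $u$ of $u=Lu+P$, which is exactly $D_\theta z^\star_{\theta_0}$; this serves as a sanity check.

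For convergence, we use that $\spr(L)<1$. Gelfand's formula then gives $\|(L^\top)^r\|\le C q^r$ for any $q\in(\spr(L),1)$. The error is thus bounded by $C q^r\|P\|\|(I-L^\top)^{-1}g\|\to0$, at a geometric rate. For the descent claim, we write $\langle d_r^{\mathrm{JFB}},\nabla_\theta F\rangle\ge\|\nabla_\theta F\|^2-\|e_r\|\|\nabla_\theta F\|$, where $e_r$ denotes the error term. Choosing $r_0$ such that $\|e_r\|<\|\nabla_\theta F(\theta_0)\|$ for all $r\ge r_0$ makes this pairing strictly positive. Then $-d_r^{\mathrm{JFB}}$ has a negative directional derivative of $F$, which is $C^1$ near $\theta_0$ by Lemma~\ref{lem:H_invertible} and the $C^1$ assumption on $\widehat\ell$. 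It is therefore a strict descent direction.

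The main obstacle would be the spectral bound $\spr(L)<1$, but it is already supplied by Proposition~\ref{prop:side_by_side}. What remains that needs care is only the algebraic identity $(I-L)^{-1}P=-H^{-1}B$ in the reduced coordinates. One must check that $P$ and $L$ as defined there really coincide with the derivatives of the tracked step in the same reduced chart as $H$ and $B$, so that all matrices act on the same space. We take this as given from the discussion following~\eqref{eq:LP_def}.
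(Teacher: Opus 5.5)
Your proposal is correct and follows essentially the same route as the paper. You sum the geometric series using $\spr(L)<1$, identify $(I-L)^{-1}P=-H^{-1}B$ directly from the definitions of $L$ and $P$, pair with $g$ to obtain the error identity, and conclude eventual descent from the Cauchy--Schwarz bound $\langle a,d\rangle\ge\|a\|^2-\|a\|\,\|d-a\|$. The differences are cosmetic: the paper works with the untransposed sensitivities $u_r=(I-L_\lambda)^{-1}(I-L_\lambda^r)P_\lambda$ for a general resolvent parameter $\lambda$ before specializing to $\lambda=1$, whereas you work directly with the transposed closed form and add an explicit geometric rate via Gelfand's formula.
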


The proof uses
\[
\sum_{k=0}^{\infty}L^kP
=
(I-L)^{-1}P
=
-H^{-1}B
\]
and is given in appendix~\ref{app:consistency_proof}.

Because $L$ may be nonnormal, $\spr(L)<1$ controls only the asymptotic decay and gives no useful bound at a prescribed moderate depth.
Theorem~\ref{thm:nonsmooth_kernel}(3) guarantees a finite $r_*$ with $\|L^{\,r_*}\|_{\cM}<1$ on the invariant subspace of Proposition~\ref{prop:smooth_bridge}, but no explicit value of $(r_*,\|L^{\,r_*}\|_{\cM})$ is available a priori; the resulting finite-depth bias bound is recorded in appendix~\ref{app:consistency_proof}.

The consistency result is local: it assumes a locally fixed active set, the exact-proximal map $T_\theta$, and exact equilibrium detachment, whereas Algorithm~\ref{alg:jfb_pdhg} runs the finite-Newton map $\widetilde T_\theta$ from a warm detached near-equilibrium state.
Section~\ref{sec:experiments} therefore evaluates this moderate-depth, warm-detach regime through branch-consistent gradient diagnostics.

\section{Numerical Experiments}\label{sec:experiments}

We test JFB on four inverse MFG problems.
Experiment~1 measures how tracked depth affects gradient quality and inverse recovery (section~\ref{subsec:exp_gradient}); Experiment~2 compares runtime and peak memory with full unrolling and checkpointed AD as the discretization grows (section~\ref{subsec:exp_scaling}); Experiments~3 and~4 test recovery under noisy, partial-in-time, and multi-instance observations (section~\ref{subsec:exp_recovery}).

\subsection*{Common setup}

All reconstruction runs use the PDHG solver
\eqref{eq:pdhg_dual}--\eqref{eq:pdhg_primal}, the exact Schur
preconditioner $W=KK^\top$, $\tau=\sigma=0.99$, and entropy weight
$\gamma_I=0.01$. All experiments use the time horizon $T=1$, so
$h_t=1/N_t$; the experiment-specific terminal weight $\gamma_T$ is
listed in Table~\ref{tab:experimental_configs}. Each outer iteration
warm-starts from the previous
detached primal-dual state. We parameterize
\[
c_\theta(s)=0.01+\operatorname{softplus}(\operatorname{MLP}_\theta(s))
\]
with a three-layer tanh MLP and train with Adam. For a binary observation mask $\cO$, let $|\cO|$ denote the number of observed entries and $\odot$ the entrywise product; the finite-solve training loss is
\[
\mathcal L(\theta)
=|\cO|^{-1}\|\cO\odot(\rho_\theta-\rho^{\mathrm{obs}})\|_2^2.
\]
Write $\rho_\theta:=P_\rho x^{N_{\mathrm{itr}}}$ for the density block of the finite-solve output; the detached base point $\bar z$ of section~\ref{subsec:jfb_method} is here the warm state entering the final $r$ steps.
Throughout, $\rho^{\mathrm{obs}}$ denotes the fitted data,
which may be noisy and partially observed.
We compare against full-trajectory automatic differentiation (AD,
i.e.\ full unrolling of all $N_{\mathrm{itr}}$ steps), its
rematerialized variant (AD-ckpt), and implicit differentiation (ID)
through the fixed-point equation.
AD differentiates the full $N_{\mathrm{itr}}$-step solve from the incoming warm state; JFB-$r$ differentiates only the final $r$ steps from the detached state $\bar z$. Both use the implemented update
$\widetilde T_\theta$, whose cellwise proximal problem is approximated
by damped Newton steps, rather than the exact-proximal map $T_\theta$
analyzed in section~\ref{sec:sensitivity}. ID linearizes the
fixed-point equation of $\widetilde T_\theta$ at the returned state;
when that state is close to a fixed point and the selected branch is
differentiable, this is an implementation-level implicit reference, and
otherwise a branch-linearized diagnostic.
We report the grid-weighted relative inverse-cost-map error
\[
\operatorname{RelErr}(c_\theta,c^\star)
=100\,\frac{\|c_\theta-c^\star\|_{2,h_x}}
{\|c^\star\|_{2,h_x}}.
\]
Where reported, the best-by-loss error for each seed is the
reconstruction error at the iterate minimizing that seed's recorded
training loss; best-by-loss selection is tracked at every outer
iteration (the stored parameter history is subsampled for storage
only). Captions distinguish best-by-loss
from final-iterate error. Time
and memory
entries reported without a spread are means over the seeds of the
experiment, except where a caption states a different convention.
Unless stated otherwise, results use three seeds and one NVIDIA
RTX~6000 Ada GPU in double precision. All seeds and per-run
configurations are fixed in the released repository. Exact optimizer, architecture,
data-generation, masking, seed, and memory-measurement details are in
appendix~\ref{app:experimental_details}.

\refstepcounter{experiment}\label{sec:exp1}
\subsection{Gradient Validation and Tracked Depth
(Experiment~\theexperiment)}
\label{subsec:exp_gradient}

We use $\Omega=[-0.5,0.5]$, $N_x=64$, $N_t=16$, and
$c^\star(s)=0.7-0.3\cos(2\pi s)$, with a full clean density
trajectory. All methods use a width-32 MLP, $500$ outer iterations,
and $N_{\mathrm{itr}}=100$ PDHG steps per outer iteration. AD
differentiates all $100$ steps; AD-ckpt computes the same direction
by rematerialization; ID uses the matched detached warm state; JFB-$r$
differentiates only the final $r$ steps. Numerical solver settings are in
appendix~\ref{app:exp1_details}.

\begin{table}[htbp]
\centering
\caption{Experiment 1 comparison (three seeds). All methods use the
matched warm-started forward protocol; ID is the matched warm-start
implicit method. Errors are final-iterate mean $\pm$ standard
deviation.}
\label{tab:exp1}
\begin{tabular}{lcccc}
\toprule
{Method} & {Rel.\ Error (\%)} & {Total Time (s)} & {Peak Mem.\ (MB)} & {Speedup} \\
\midrule
AD       & $0.95 \pm 0.09$ & $32.3 \pm 0.0$   & $142.6$ & $1.0\times$ \\
AD-ckpt  & $0.94 \pm 0.10$ & $38.7 \pm 0.1$   & $64.1$  & $0.83\times$ \\
ID       & $0.94 \pm 0.09$ & $69.4 \pm 0.6$   & $64.1$  & $0.47\times$ \\
JFB-10   & $1.13 \pm 0.15$ & $13.2 \pm 0.0$   & $48.1$  & $\mathbf{2.4\times}$ \\
JFB-5    & $2.51 \pm 0.17$ & $12.1 \pm 0.1$   & $48.1$  & $\mathbf{2.7\times}$ \\
\bottomrule
\end{tabular}
\end{table}

As Table~\ref{tab:exp1} shows, AD, AD-ckpt, and ID reach
$0.94$--$0.95\%$ error once their forward protocols are matched.
ID saves memory at the cost of backward Krylov solves.
JFB-10 gives up about $0.2$ percentage points of accuracy for a $2.4\times$ speedup and about one third of AD's memory. All methods
run the identical forward solve (precision, $N_{\mathrm{itr}}$, and
warm-start rule); the systematic timing differences come from
differentiation.

Under the tested settings $r=1$ fails, whereas $r=5$ and $r=10$
improve recovery; the depth ablation and its learning-rate dependence
are in appendix~\ref{app:exp1_ablation}. This matches the surrogate interpretation of Proposition~\ref{prop:jfbr_surrogate}: wherever the tracked trajectory is differentiable, JFB-1 differentiates a one-step surrogate whose gradient need not align with the true objective.

Along three JFB-10 training runs, we evaluate $d^{\mathrm{JFB}}_{r}$
against a separately cold-solved implicit reference
$d^{\mathrm{ID\text{-}ref}}$ at $63$ checkpoints and
$r\in\{1,5,10,20\}$. Under a locally fixed active set the reference is
the implicit gradient in~\eqref{eq:implicit_gradient}; at kinked flux
entries it is the one-sided branch selected by the implementation. It serves as a
diagnostic reference, separate from the ID training method of
Table~\ref{tab:exp1}. Full protocol and reference checks are in
appendix~\ref{app:exp1_alignment_details}.

\begin{table}[htbp]
\centering
\small
\setlength{\tabcolsep}{3pt}
\caption{Experiment 1 descent-phase alignment diagnostics. The descent
phase is the $15$ checkpoints at outer iterations at most $100$ across
the three JFB-10 trajectories. Entries are
$\cos(d^{\mathrm{JFB}}_{r},d^{\mathrm{ID\text{-}ref}})$ and the
relative bias
$\|d^{\mathrm{JFB}}_{r}-d^{\mathrm{ID\text{-}ref}}\|/
\|d^{\mathrm{ID\text{-}ref}}\|$. Minimum cosines and statistics over
all $63$ checkpoints are in
appendix~\ref{app:exp1_alignment_details}.}
\label{tab:exp1_alignment}
\begin{tabular}{rrrr}
\toprule
$r$ & median $\cos$ & positive checkpoints & med.\ rel.\ bias \\
\midrule
$1$  & $-0.62$ & $2/15$  & $1.00$ \\
$5$  & $+0.75$ & $14/15$ & $0.90$ \\
$10$ & $+0.96$ & $15/15$ & $0.64$ \\
$20$ & $+0.90$ & $15/15$ & $0.60$ \\
\bottomrule
\end{tabular}
\end{table}

At the states visited by JFB-10 during descent, the truncated directions are positively aligned with the implementation-level implicit reference at $14/15$ checkpoints for $r=5$ and $15/15$ for $r\ge10$, although the median relative bias stays of order one at every depth.
JFB-1 is positively aligned at only $2$ of $15$ descent-phase checkpoints, matching its training failure. Because the $r\neq10$ directions are evaluated retrospectively
at JFB-10 states, the table compares alignment at a common set of
states; recovery along each depth's own trajectory is reported in
Table~\ref{tab:exp1} and appendix~\ref{app:exp1_ablation}.

At the true-cost equilibrium, density remains interior, but strict
complementarity fails for some boundary flux coordinates; active sets
also change during training. Theorem~\ref{thm:jfb_consistency}
therefore does not apply directly, and the reported alignment results
are branch-linearized diagnostics. The small detach residual, whose
median is $2\times10^{-7}$ (maximum $1.2\times10^{-5}$, excluding the
initial outer iteration, which begins without a cached warm state),
indicates proximity to a fixed point but does not
bound the sensitivity error. Further active-set and detach-residual
diagnostics are in appendix~\ref{app:extended_diag}, and the full
alignment statistics are in
appendix~\ref{app:exp1_alignment_details}.

\refstepcounter{experiment}\label{sec:exp2}
\subsection{Two-Dimensional Computational Scaling
(Experiment~\theexperiment)}
\label{subsec:exp_scaling}

We use
$c^\star(s_1,s_2)=0.7-0.15\cos(2\pi s_1)-0.15\cos(2\pi s_2)$
and refine $32^2$, $64^2$, and $128^2$ spatial grids with
$N_t=16,32,64$, respectively. A width-64 MLP is trained for $300$
outer iterations with $N_{\mathrm{itr}}=100$. The comparison is
primarily computational; error entries at $32^2$ and $64^2$
aggregate three seeds, the resource-bound $128^2$ runs use one seed,
and forward-budget checks are in appendix~\ref{app:exp2_checks}.

\begin{table}[htbp]
\centering
\caption{Experiment 2 scaling on one 48~GB RTX~6000 Ada
($300$ outer iterations, warm-started $N_{\mathrm{itr}}=100$).
Errors at $32^2$ and $64^2$ are the median (min--max) of three
seeds at the final iterate (appendix~\ref{app:exp2_checks}); the
$128^2$ rows are single resource-bound runs. Time and memory are
from the first seed.}
\label{tab:scaling_results}
\begin{tabular}{llrrr}
\toprule
{Grid} & {Method} & {Rel.\ Error (\%)} & {Total Time (s)} & {Peak Mem.\ (GB)} \\
\midrule
$32\times 32$   & AD      & $1.45$ {\footnotesize$(1.41$--$1.48)$} &   $33$  &  $2.51$ \\
                & JFB-10  & $2.01$ {\footnotesize$(1.92$--$2.03)$} &   $16$  &  $0.27$ \\
\midrule
$64\times 64$   & AD      & $1.63$ {\footnotesize$(1.43$--$4.95)$} &  $102$  & $20.07$ \\
                & JFB-10  & $2.36$ {\footnotesize$(2.30$--$72.2)$} &   $33$  &  $2.19$ \\
\midrule
$128\times 128$ & AD      & \multicolumn{3}{c}{\textit{out of memory ($172$~GB allocation)}} \\
                & AD-ckpt & $1.48$ &  $927$  & $16.28$ \\
                & JFB-10  & $2.92$ &  $179$  & $17.48$ \\
\bottomrule
\end{tabular}
\end{table}

In Table~\ref{tab:scaling_results}, JFB-10 reduces both runtime and
peak memory relative to full AD at $32^2$ and $64^2$. At $128^2$ plain
AD exceeds the 48~GB GPU memory, while checkpointed AD completes the
same training in $16.28$~GB and $927$~s. JFB-10 is then about
$5.2\times$ faster than checkpointed AD at comparable peak memory
($17.48$~GB against $16.28$~GB), with single-run recovery errors of
$2.92\%$ and $1.48\%$.

Some $64^2$ runs become unstable after the data misfit has already
saturated; Table~\ref{tab:scaling_results} reports the resulting
ranges, with individual trajectories and forward-budget checks in
appendix~\ref{app:exp2_checks}.

\subsection{Recovery under Noise, Partial-in-Time Observations, and
Multiple Instances}
\label{subsec:exp_recovery}

\refstepcounter{experiment}\label{sec:exp3}
\paragraph{Single-instance recovery (Experiment~\theexperiment)}

Experiment~3 reuses Experiment~2's ground-truth map
$c^\star(s_1,s_2)=0.7-0.15\cos(2\pi s_1)-0.15\cos(2\pi s_2)$ on the
same $32^2$ grid with $N_t=16$, now under degraded observations and a
longer budget of $1000$ outer iterations. We compare four observation
regimes:
clean data with all output times observed; noise $\gamma_n=0.01$
(additive uniform noise of width $\gamma_n$, i.e.\
$\gamma_n\,U[-1/2,1/2]$; see appendix~\ref{app:exp3_details}) with
all output times observed; clean data with $50\%$ of output times
observed; and noise $\gamma_n=0.05$ with $50\%$ of output times
observed, i.e.\ partial-in-time density observations. A width-64 MLP
is trained with $N_{\mathrm{itr}}=100$ PDHG steps per outer iteration.
JFB-25 is
included in the hardest setting. Noise, masks, and selection rules
are specified in appendix~\ref{app:exp3_details}.

\begin{table}[htbp]
\centering
\small
\setlength{\tabcolsep}{3pt}
\caption{Experiment 3 recovery (three seeds), with partial-in-time
density observations in the $50\%$ rows. JFB-25 is included only in the
noisy, partial-in-time setting. Best-by-loss errors agree with the final errors
within $0.01$ percentage point and are omitted.}
\label{tab:exp3}
\begin{tabular}{p{0.24\linewidth}lcrr}
\toprule
{Setting} & {Method} & {Final Err.\ (\%)} & {Time (s)} & {Mem.\ (MB)} \\
\midrule
\multirow{2}{*}{Clean, $100\%$ obs}   & AD       & $0.63 \pm 0.04$ & $94$ & $2570$ \\
                                       & JFB-10  & $0.78 \pm 0.06$ & $\mathbf{37}$ & $\mathbf{280}$ \\
\midrule
\multirow{2}{*}{$\gamma_n{=}0.01$, $100\%$ obs}  & AD       & $1.36 \pm 0.10$ & $92$ & $2570$ \\
                                                  & JFB-10  & $1.66 \pm 0.23$ & $\mathbf{35}$ & $\mathbf{280}$ \\
\midrule
\multirow{2}{*}{Clean, $50\%$ obs}    & AD       & $0.64 \pm 0.04$ & $94$ & $2570$ \\
                                       & JFB-10  & $0.78 \pm 0.13$ & $\mathbf{36}$ & $\mathbf{280}$ \\
\midrule
\multirow{3}{*}{$\gamma_n{=}0.05$, $50\%$ obs}   & AD       & $9.50 \pm 2.84$ & $94$ & $2570$ \\
                                                  & JFB-10  & $10.82 \pm 2.32$ & $\mathbf{34}$ & $\mathbf{280}$ \\
                                                  & JFB-25  & $9.49 \pm 2.91$  & $46$ & $662$ \\
\bottomrule
\end{tabular}
\end{table}

Figure~\ref{fig:exp3_tracked_depth} shows reconstructions from a
representative seed. JFB-10 is within $0.3$ percentage points of AD in
the easier settings; under heavy noise and partial-in-time observation,
JFB-25 is indistinguishable from AD within the three-seed spread while
using about one quarter of AD's peak memory and about half its runtime.

\begin{figure}[htbp!]
\centering
\includegraphics[width=0.9\linewidth]{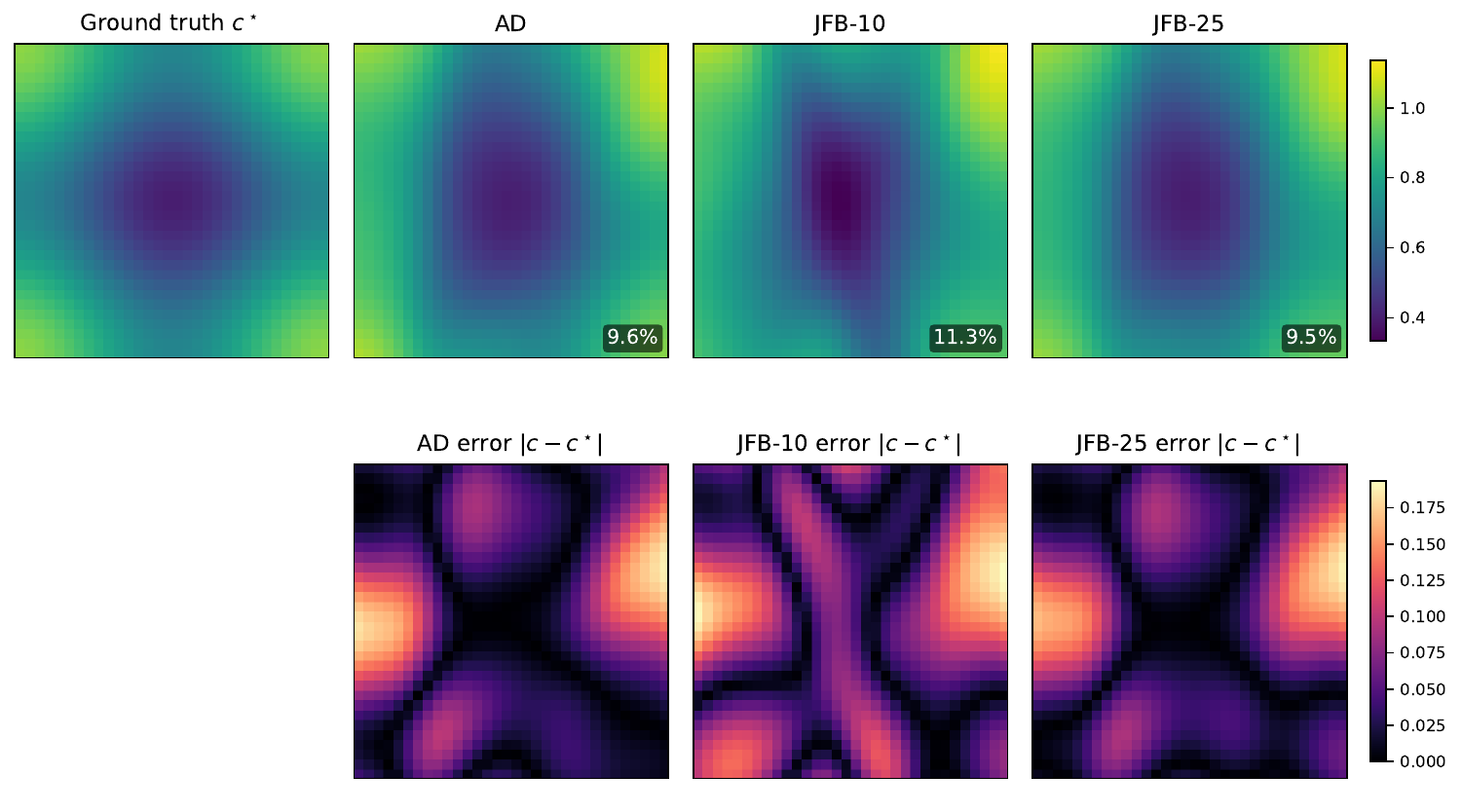}
\caption{Experiment 3 noisy, partial-in-time setting for a
representative seed.
Top: ground truth and AD, JFB-10, and JFB-25 reconstructions.
Bottom: pointwise errors on a shared scale.}
\label{fig:exp3_tracked_depth}
\end{figure}

The depth dependence in Table~\ref{tab:exp3} is consistent with the 2D
alignment probe reported in appendix~\ref{app:exp3_alignment}, which is
based on a single representative run.

\refstepcounter{experiment}\label{sec:exp4}
\paragraph{Shared-cost recovery across multiple instances
(Experiment~\theexperiment)}

Ten MFG instances share a two-bump inverse-cost map but have distinct
Gaussian-mixture endpoint densities. Observations are generated on a $N_x^2=128^2$,
$N_t=64$ grid, restricted to the $N_x^2=64^2$, $N_t=32$ reconstruction grid,
and sampled at $25\%$ of output times with noise level $\gamma_n=0.2$, so recovery is tested under discretization mismatch.
AD, JFB-10, and JFB-25 train a width-64 MLP for $2000$ mini-batch steps
with batch size two and $N_{\mathrm{itr}}=100$; full details are in
appendix~\ref{app:exp4_details}.

\begin{table}[htbp]
\centering
\caption{Experiment 4 mini-batch recovery (three seeds). Errors are
final-iterate mean $\pm$ sample standard deviation; time and memory are seed means.}
\label{tab:exp4}
\begin{tabular}{lccr}
\toprule
{Method} & {Final Err.\ (\%)} & {Time (min)} & {Mem.\ (GB)} \\
\midrule
AD     & $8.54 \pm 1.83$ & $21.8$          & $20.2$ \\
\midrule
JFB-10 & $8.55 \pm 3.40$ & $\mathbf{6.2}$  & $\mathbf{2.3}$ \\
\midrule
JFB-25 & $\mathbf{6.11 \pm 1.88}$ & $8.9$   & $5.2$ \\
\bottomrule
\end{tabular}
\end{table}

Table~\ref{tab:exp4} reports final-iterate results. JFB-25 has the
lowest mean recovery error, $6.11\%$; AD and JFB-10 give $8.54\%$ and
$8.55\%$, respectively. JFB-10 runs $3.5\times$ faster than AD and
reduces peak memory by a factor of $8.9$, while JFB-25 has intermediate
runtime and memory. Because the outer updates are minibatched, JFB-$r$
is here a biased stochastic-gradient method.
Figure~\ref{fig:exp4_recovery} shows the final reconstructions.

\begin{figure}[htbp!]
\centering
\includegraphics[width=0.9\linewidth]
{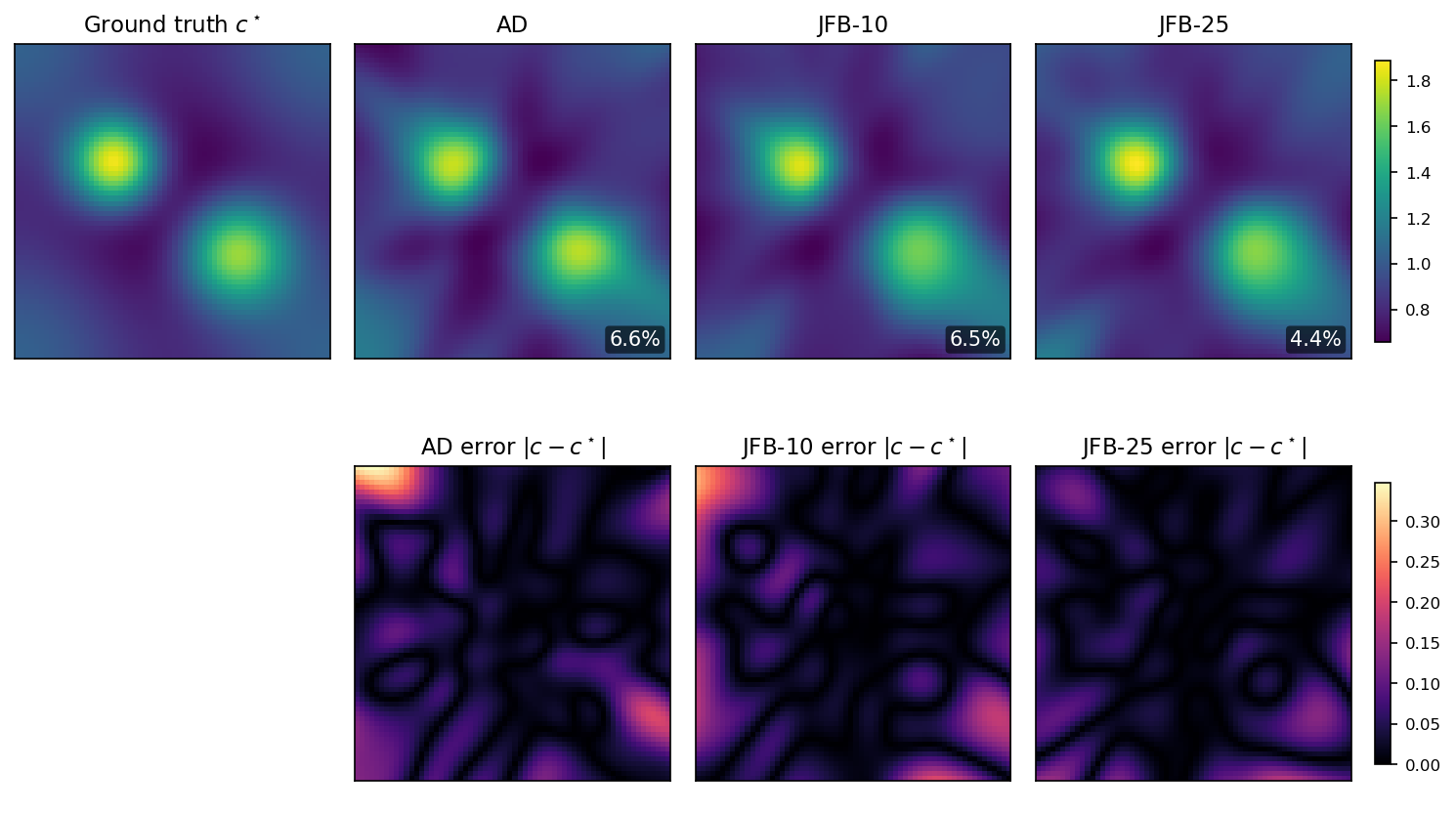}
\caption{Experiment 4 ground truth and final-iterate reconstructions
for a representative seed ($123$) under fine-to-coarse observations,
with pointwise errors on a shared scale. Panel labels give this seed's
final errors; three-seed means are in Table~\ref{tab:exp4}.}
\label{fig:exp4_recovery}
\end{figure}

\section{Conclusion}
\label{sec:conclusion}

We introduced a JFB-PDHG method for inverse mean-field games.
It avoids equilibrium-adjoint linear solves and full-trajectory reverse-mode storage; recorded tape storage scales with the tracked depth $r$.

The analysis identifies the exact-proximal PDHG map $T_\theta$ as an exact metric resolvent, proves directional stability without flux strict complementarity, and interprets JFB-$r$, under a locally fixed active set at an exact equilibrium detach point, as a truncated adjoint iteration with large-depth consistency.

In the controlled 1D test, the noisy test with partial-in-time observations, and the multi-instance test, selected moderate depths reduced runtime and peak memory relative to full unrolling while retaining recovery errors close to those of AD.
At $128^2$, JFB-10 was about $5.2\times$ faster than checkpointed AD at comparable peak memory, with higher recovery error.
The fixed-active-set exact-detach theory gives consistency with depth.
Future work includes approximate-detach bounds, adaptive depth, generalized kink sensitivity, and extending the framework to higher-dimensional inverse problems arising in control and dynamics~\cite{koltun2012continuous, daosud2005neural}, as well as inverse transport and flow reconstruction~\cite{stuart2020inverse, onken2021ot, gonzalez2024nonlinear, andrade2024sparsistency}.

\section*{Acknowledgments}
ChatGPT (OpenAI) was used to polish the authors' written text for spelling, grammar, and style.
Claude (Anthropic) was used to assist with JAX-based acceleration of portions of the numerical implementation, specifically translating and vectorizing reference code for the PDHG and JFB updates.
All AI-assisted text and code were reviewed and tested by the authors, who take full responsibility for the final manuscript.

\appendix

\section{Block decomposition of the PDHG iteration}
\label{app:block_pdhg}

This appendix records the explicit operator blocks used by the implemented PDHG iteration.
The main text uses their compact form based on $T_x$, $S$, and $K$.

\subsection{Active primal/dual blocks and dimensions}
\label{app:block_dims}
The discrete grid is cell-centered in space with $N_x$ cells of width $h_x$ and vertex-in-time with $N_t$ intervals of width $h_t$.
The density $\rho^k_i$ lives on slices $k=0,\ldots,N_t$, the upwind flux components $m^{+,j}_i\ge 0$ and $m^{-,j}_i\le 0$ live on intervals $j=0,\ldots,N_t-1$, and the dual $\phi^j_i$ lives on the same interval grid as the continuity residual.

The initial slice is fixed, $\rho^0\equiv\mu_0$, and is not a primal degree of freedom.
The two boundary slots $m^{+,j}_{N_x-1}$ and $m^{-,j}_0$ are masked to zero by the no-flux convention.
The active primal vector is
\[
x=(\rho^{1:N_t},\,m^+_{\mathrm{act}},\,m^-_{\mathrm{act}})\in\mathbb{R}^n,
\qquad
n=N_xN_t+2(N_x-1)N_t,
\]
and the dual vector is
\[
\phi\in\mathbb{R}^{N_c},\qquad N_c=N_xN_t.
\]
In $d$ space dimensions each axis contributes its own upwind pair $m^{\ell,\pm}$ on interior faces, so each cell carries $q=2d$ split flux components.
The closed-form expressions in appendices~\ref{app:K_structure} and~\ref{app:Tx_S} are written in $d=1$ for notational economy and extend componentwise.

\subsection{Single-row \texorpdfstring{$K$}{K} and its adjoint}
\label{app:K_structure}
The constraint operator $K\in\mathbb{R}^{N_c\times n}$ implements the discrete continuity equation with the IC absorbed into the right-hand side.
With the row-block decomposition of the active primal,
\begin{equation}
K=\bigl[\;D_t\;\;D_{m^+}\;\;D_{m^-}\;\bigr],
\label{eq:K_blocks}
\end{equation}
the action is
\[
(Kx)^j_i=\frac{\rho^{j+1}_i-\rho^j_i}{h_t}+(\nabla\cdot m)^j_i,
\qquad j=0,\ldots,N_t-1,
\]
where the upwind divergence is
\[
(\nabla\cdot m)^j_i
=\frac{m^{+,j}_i-m^{+,j}_{i-1}}{h_x}
+\frac{m^{-,j}_{i+1}-m^{-,j}_i}{h_x}
\]
with Neumann (no-flux) boundary conditions $m^{+,j}_{-1}=m^{+,j}_{N_x-1}=0$ and $m^{-,j}_{N_x}=m^{-,j}_0=0$.
The right-hand side of the constraint is $b\in\mathbb{R}^{N_xN_t}$ with $b^0_i=\mu_{0,i}/h_t$ and $b^j_i=0$ for $j\ge 1$, so $Kx=b$.

The temporal block $D_t$ acts on the active density slices by $(D_t\rho)^j_i=(\rho^{j+1}_i-\rho^j_i)/h_t$ with $\rho^0:=0$ inside the operator (the $\mu_{0,i}/h_t$ shift is moved to $b$).
On the $N_t$ active slices $D_t$ is square and lower-bidiagonal with nonzero diagonal entries $1/h_t$, hence invertible, and has full row rank.
Thus $K$ has full row rank.

The adjoint $K^\top\phi$ maps the interval-defined dual back to the primal blocks with no slice-to-interval lift (since $Kx$ and $\phi$ share the same $(N_x,N_t)$ shape); the flux rows are the corresponding upwind differences of $\phi$ in space.
The one property used later is that the \emph{density} rows involve the temporal block alone,
\[
(K^\top\phi)\big|_{\rho^k_i}
=\frac{\phi^{k-1}_i-\phi^k_i}{h_t}
\;\; (k<N_t),
\qquad
(K^\top\phi)\big|_{\rho^{N_t}_i}
=\frac{\phi^{N_t-1}_i}{h_t},
\]
that is, $(K^\top\phi)|_\rho=D_t^\top\phi$ sitewise, with no contribution from the flux columns.

\subsection{Step-size operators and the exact Schur preconditioner}
\label{app:Tx_S}
The primal and dual step operators are scalar multiples of the identity and the exact Schur preconditioner respectively,
\begin{equation}
T_x=\tau I_n,
\qquad
S=\sigma W^{-1},
\qquad
W:=KK^\top.
\label{eq:S_explicit}
\end{equation}
Because $K$ has a single continuity row block, $W$ is the block sum of three contributions
\[
W=D_tD_t^\top+D_{m^+}D_{m^+}^\top+D_{m^-}D_{m^-}^\top,
\]
and the spatial-temporal separability of the divergence operator gives the tensor-product form
\[
W=I_x\otimes B_t+2L_x\otimes I_t,
\qquad
B_t:=D_tD_t^\top,
\]
where $L_x\in\mathbb{R}^{N_x\times N_x}$ is the cell-centered 1D Neumann graph Laplacian and $B_t\in\mathbb{R}^{N_t\times N_t}$ is the interval-time tridiagonal
\[
B_t=\frac{1}{h_t^2}
\begin{pmatrix}
1 & -1 \\
-1 & 2 & -1 \\
& -1 & 2 & \ddots \\
&& \ddots & \ddots & -1 \\
&&& -1 & 2
\end{pmatrix}.
\]
The first diagonal entry is $1$ rather than $2$ because $\rho^0$ is fixed (only one off-diagonal coupling at slice $\rho^1$), and this single change makes $D_t$ full row rank and hence $W\succ 0$ on the entire dual space.
The eigenvalues of $W$ are
\[
\nu_{i,\ell}(W)=\nu^t_\ell+2\nu^x_i>0,
\qquad
\nu^t_\ell=\frac{2}{h_t^2}\Bigl(1-\cos\tfrac{(2\ell+1)\pi}{2N_t+1}\Bigr),
\qquad
\nu^x_i=\frac{2}{h_x^2}\Bigl(1-\cos\tfrac{\pi i}{N_x}\Bigr).
\]
We apply $W^{-1}$ to a dual residual using a DCT in space and an eigendecomposition of the $N_t\times N_t$ time block $B_t$.

\subsubsection{Closed-form step-size check}
\label{app:stepsize_check}
With $T_x=\tau I$ and $S=\sigma W^{-1}$, $W=KK^\top$, the step-size quantity of Assumption~\ref{asm:pdhg_stepsizes} is $\|S^{1/2}KT_x^{1/2}\|_2^2=\tau\sigma\|W^{-1/2}K\|_2^2$, and full row rank of $K$ gives $\|W^{-1/2}K\|_2^2=1$ exactly.
The assumption therefore reduces to $\tau\sigma<1$, satisfied by the reconstruction choice $\tau=\sigma=0.99$ with no numerical norm estimation.

\subsection{Per-cell proximal step}
\label{app:prox}
After the linear PDHG step the proximal map decouples across $(i,j)$ because the kinetic energy at interval $j$ depends only on $\rho^{j+1}$ (backward-Euler pairing).
For each $(i,j)$ the prox jointly updates the cell-time triple $(\rho^{j+1}_i,m^{+,j}_i,m^{-,j}_i)$ by minimizing
\begin{align*}
\frac{(m^+)^2+(m^-)^2}{2c\rho}
+\gamma_I\rho\log\rho
+\mathbf{1}_{\{j+1=N_t\}}\frac{\gamma_T}{h_t}\rho(\log\rho-\log\mu_{1,i})
+\\ \frac{(\rho-\tilde\rho)^2}{2\tau}
+\frac{(m^+-\tilde m^+)^2}{2\tau}
+\frac{(m^--\tilde m^-)^2}{2\tau}
\end{align*}
over $\rho\ge 0$, $m^+\ge 0$, $m^-\le 0$, where $(\tilde\rho,\tilde m^\pm)$ are the gradient-step inputs and $c=c_\theta(s_i)$.
This is exactly $\prox_{\tau f_\theta}$ for the solver-normalized objective in~\eqref{eq:disc_objective}; the implementation, dual variable, KKT operator, and all finite-iteration sensitivity objects use this same normalization.
The flux components are recovered in closed form from $\rho$,
\[
m^{+,*}=\max(\tilde m^+,0)\,\frac{c\rho}{c\rho+\tau},\qquad
m^{-,*}=\min(\tilde m^-,0)\,\frac{c\rho}{c\rho+\tau},
\]
and the reduced 1D objective in $\rho$,
\begin{align*}
\Phi(\rho)=\frac{\kappa}{2(c\rho+\tau)}+\gamma_I\rho\log\rho
+\mathbf{1}_{\{j+1=N_t\}}\tfrac{\gamma_T}{h_t}(\rho\log\rho-\rho\log\mu_{1,i})
+\frac{(\rho-\tilde\rho)^2}{2\tau},
\\
\kappa:=[\max(\tilde m^+,0)]^2+[\min(\tilde m^-,0)]^2,
\end{align*}
is approximated in the implementation by a fixed budget of $12$ damped Newton iterations, with the step safeguarded so that $\rho$ never crosses the floor $\rho\ge 10^{-12}$.

\section{Proof of Lemma~\ref{lemma:f_theta_scvx}}
\label{app:scvx_proof}

The smooth objective is separable across space-time cells.
From~\eqref{eq:disc_objective} and the per-cell form of the terminal functional (appendix~\ref{app:prox}),
\[
\cE_\theta(x)
=
\sum_{j=0}^{N_t-1}\sum_i
e_{ij}\bigl(\rho^{j+1}_i,\,m^{+,j}_i,\,m^{-,j}_i\bigr),
\]
\[
e_{ij}(\rho,m^+,m^-)
:=
\frac{(m^+)^2+(m^-)^2}{2\,c_i\,\rho}
+\gamma_I\,\rho\log\rho
+\mathbf{1}_{\{j+1=N_t\}}\,\frac{\gamma_T}{h_t}\,
\rho\bigl(\log\rho-\log\mu_{1,i}\bigr),
\]
with $c_i=c_\theta(s_i)$; masked boundary flux slots simply drop the corresponding variables.
Distinct cells share no variables (the backward-Euler pairing ties $m^{\pm,j}$ to $\rho^{j+1}$ only), so $\nabla^2\cE_\theta(x)$ is block diagonal with one $3\times3$ (or smaller) block $\nabla^2e_{ij}$ per cell, and it suffices to bound each block uniformly over $x\in U$.

Fix a cell and write $e=e_{ij}$, $c=c_i$,
\[
\alpha:=\frac{1}{c\rho},
\qquad
w:=\Bigl(\frac{m^+}{\rho},\frac{m^-}{\rho}\Bigr)\in\mathbb{R}^2,
\qquad
\beta:=\gamma_I+\mathbf{1}_{\{j+1=N_t\}}\frac{\gamma_T}{h_t}.
\]
A direct computation gives, for every direction $(a,b)\in\mathbb{R}\times\mathbb{R}^2$,
\begin{equation}
\bigl\langle(a,b),\,\nabla^2e(\rho,m^+,m^-)\,(a,b)\bigr\rangle
=
\alpha\,\bigl|b-a\,w\bigr|^2+\frac{\beta}{\rho}\,a^2 .
\label{eq:percell_form}
\end{equation}
Indeed, the kinetic perspective term contributes $\partial^2_{mm}=\alpha I_2$, $\partial^2_{m\rho}=-\alpha w$, $\partial^2_{\rho\rho}=\alpha|w|^2$, whose quadratic form is $\alpha|b-aw|^2$, while the entropy and terminal terms contribute $\beta/\rho$ in the $\rho\rho$ entry.
Both terms in~\eqref{eq:percell_form} are nonnegative, and the form vanishes only if $a=0$ (since $\beta\ge\gamma_I>0$) and then $b=0$: the kinetic degeneracy along the scaling ray $(\rho,m)$ is exactly what the entropy curvature removes.

\emph{Strong convexity.} Decompose $b$ into components parallel and orthogonal to $w$.
The orthogonal component sees the eigenvalue $\alpha$, while on the $(a,\,b_{\parallel})$ plane the form~\eqref{eq:percell_form} has matrix
\[
Q=
\begin{pmatrix}
\alpha|w|^2+\beta/\rho & -\alpha|w|\\
-\alpha|w| & \alpha
\end{pmatrix},
\qquad
\det Q=\frac{\alpha\beta}{\rho},
\qquad
\operatorname{tr}Q=\alpha\bigl(1+|w|^2\bigr)+\frac{\beta}{\rho}.
\]
For a $2\times2$ symmetric positive-semidefinite matrix, $\lambda_{\min}(Q)\ge\det Q/\operatorname{tr}Q$, hence
\[
\lambda_{\min}\bigl(\nabla^2e\bigr)
\;\ge\;
\min\Bigl\{\alpha,\;
\frac{\alpha\beta/\rho}{\alpha(1+|w|^2)+\beta/\rho}\Bigr\}
=
\frac{\alpha\beta/\rho}{\alpha(1+|w|^2)+\beta/\rho}.
\]
The right-hand side is increasing in $\alpha$ and in $\beta/\rho$ and decreasing in $|w|^2$.
On $U$, Assumption~\ref{asm:forward_regularity} gives the uniform bounds
\[
\alpha\ge\alpha_0:=\frac{1}{c_{\max}\rho_{\max}},
\qquad
\frac{\beta}{\rho}\ge\beta_0:=\frac{\gamma_I}{\rho_{\max}},
\qquad
|w|^2\le w_0^2:=\frac{2\,m_{\text{max}}^2}{\rho_{\min}^2},
\]
so that, uniformly over cells, $x\in U$, and $\theta$ (only the bounds $c_{\min}\le c_\theta\le c_{\max}$ enter),
\[
\nabla^2\cE_\theta(x)
\;\succeq\;
\mu\,I,
\qquad
\mu:=
\frac{\alpha_0\beta_0}{\alpha_0\bigl(1+w_0^2\bigr)+\beta_0}
\;>\;0 .
\]
Since $U$ is convex, $\cE_\theta$ is $\mu$-strongly convex on $U$.

\emph{Smoothness.} In the other direction, $\lambda_{\max}(\nabla^2e)\le\operatorname{tr}(\nabla^2e) =\alpha(2+|w|^2)+\beta/\rho$, so with $\alpha_1:=1/(c_{\min}\rho_{\min})$ and $\beta_1:=(\gamma_I+\gamma_T/h_t)/\rho_{\min}$,
\[
\nabla^2\cE_\theta(x)\;\preceq\;L_f\,I,
\qquad
L_f:=\alpha_1(2+w_0^2)+\beta_1,
\]
and $\nabla\cE_\theta$ is $L_f$-Lipschitz on $U$.

The preceding calculation is written in the 1D notation of~\eqref{eq:disc_objective}.
In $d$ space dimensions each cell carries $q=2d$ split flux components (appendix~\ref{app:block_dims}), the identity~\eqref{eq:percell_form} holds unchanged with $b,w\in\mathbb{R}^{q}$ and $w=m/\rho$, and only the plane spanned by the density direction and $w$ is coupled; the remaining $q-1$ flux directions are eigendirections with eigenvalue $\alpha$.
The bounds then hold with $w_0^2=2d\,m_{\text{max}}^2/\rho_{\min}^2$ and with the trace bound $\alpha_1(q+w_0^2)+\beta_1$ in $L_f$; for $d=1$ these reduce to the one-dimensional constants.

Finally, under the locally fixed active set of Assumption~\ref{asm:local_chart} the matrices are principal submatrices of the full matrices (active flux columns removed), and eigenvalue interlacing preserves both bounds, so the same $\mu$ and $L_f$ serve on the reduced coordinates.
\qed

\section{Proof of Theorem~\ref{thm:pdhg_resolvent}}
\label{app:pdhg_resolvent_proof}

\begin{proof}[Proof of Theorem~\ref{thm:pdhg_resolvent}]
Uniqueness and single-valuedness of the solution $z^+$ of the inclusion are part of Corollary~\ref{cor:resolvent_facts} (appendix~\ref{app:resolvent_facts}); here we verify that the PDHG update solves it.
Write $z=(x,\phi)$, $z^+=(x^+,\phi^+)$.
With $\cA_\theta$ from~\eqref{eq:kkt_operator} and $\cM$ from~\eqref{eq:M_metric}, the inclusion $\cM(z^+-z)+\cA_\theta(z^+)\ni0$ reads, in blocks,
\begin{align}
0&\in
T_x^{-1}(x^+-x)+K^\top(\phi^+-\phi)
+\partial f_\theta(x^+)+K^\top\phi^+,
\label{eq:res_block1}\\
0&=
K(x^+-x)+S^{-1}(\phi^+-\phi)+b-Kx^+.
\label{eq:res_block2}
\end{align}
In the dual block~\eqref{eq:res_block2} the $Kx^+$ terms cancel, leaving $S^{-1}(\phi^+-\phi)=Kx-b$, i.e.,
\[
\phi^+=\phi+S(Kx-b),
\]
which is exactly the dual update~\eqref{eq:pdhg_dual}.
In the primal block~\eqref{eq:res_block1}, combining the two dual terms gives
\[
0\in
T_x^{-1}(x^+-x)+\partial f_\theta(x^+)
+K^\top(2\phi^+-\phi).
\]
This is precisely the first-order optimality condition of
\[
x^+=\argmin_{x'}\Bigl\{f_\theta(x')
+\tfrac12\bigl\|x'-\bigl(x-T_xK^\top(2\phi^+-\phi)\bigr)\bigr\|_{T_x^{-1}}^2\Bigr\}
=\prox_{T_x f_\theta}\!\bigl(x-T_xK^\top(2\phi^+-\phi)\bigr),
\]
which is exactly the extrapolated primal update~\eqref{eq:pdhg_primal}.
Thus $z^+=T_\theta(z)$ solves the inclusion, and conversely every solution of the inclusion satisfies the two PDHG update formulas, so $T_\theta=R_{\theta,1}$.
\end{proof}

\subsection{Metric positivity and resolvent facts}
\label{app:resolvent_facts}

We first record that $\cM\succ0$ under Assumption~\ref{asm:pdhg_stepsizes}, as used throughout.
Let
\[
C :=
\begin{pmatrix}
I & 0 \\
SK & I
\end{pmatrix},
\qquad
\widetilde{\cM}:=
\begin{pmatrix}
Q & 0 \\
0 & S^{-1}
\end{pmatrix},
\qquad
Q := T_x^{-1}-K^\top SK .
\]
A direct computation gives the congruence
\begin{equation}
C^\top \widetilde{\cM}\, C
=
\begin{pmatrix}
Q + K^\top SK & K^\top \\
K & S^{-1}
\end{pmatrix}
= \cM .
\label{eq:metric_pullback}
\end{equation}
Now $S=\sigma W^{-1}\succ0$ because $W=KK^\top\succ0$ (appendix~\ref{app:Tx_S}), and Assumption~\ref{asm:pdhg_stepsizes} says $\|S^{1/2}KT_x^{1/2}\|_2^2<1$, i.e., $I-T_x^{1/2}K^\top SKT_x^{1/2}\succ0$; conjugating by $T_x^{-1/2}$ gives $Q\succ0$.
Hence $\widetilde{\cM}\succ0$ and, since $C$ is invertible, $\cM\succ0$ by~\eqref{eq:metric_pullback}.

\paragraph{Proof of Corollary~\ref{cor:resolvent_facts}}
Because $\cM\succ0$, the pairing $\langle u,v\rangle_{\cM}:=\langle\cM u,v\rangle$ defines an equivalent Hilbert-space structure.
Maximal monotonicity of $\cA_\theta$ in the Euclidean pairing is equivalent to maximal monotonicity of $\cM^{-1}\cA_\theta$ in this metric.
Minty's theorem~\cite{minty1962monotone,bauschke2017convex} therefore implies that $R_{\theta,\lambda}=(I+\lambda\cM^{-1}\cA_\theta)^{-1}$ is everywhere defined and single-valued, proving (1).

For (2), let $p=R_{\theta,\lambda}z$ and $q=R_{\theta,\lambda}w$.
By definition,
\[
\frac1\lambda \cM(z-p)\in\cA_\theta(p),
\qquad
\frac1\lambda \cM(w-q)\in\cA_\theta(q).
\]
Monotonicity of $\cA_\theta$ applied to this pair gives
\[
\bigl\langle \cM[(z-w)-(p-q)],\;p-q\bigr\rangle\ge0,
\]
that is, $\langle z-w,\,p-q\rangle_{\cM}\ge\|p-q\|_{\cM}^2$, which is firm nonexpansiveness in the $\cM$-inner product.

For (3), $R_{\theta,\lambda}(z^\star)=z^\star$ if and only if $z^\star\in z^\star+\lambda\cM^{-1}\cA_\theta(z^\star)$, i.e., $0\in\cA_\theta(z^\star)$.
\hfill$\square$

\section{Proof of Theorem~\ref{thm:nonsmooth_kernel}}
\label{app:nonsmooth_kernel}

Throughout, $(\theta_0,z_0)$ is fixed, $x_0$ and $v_0=-\nabla\cE_{\theta_0}(x_0)-K^\top\phi_0\in N_C(x_0)$ are as in section~\ref{subsec:nonsmooth}, and slots $i$ of the primal space are classified at $(x_0,v_0)$ as \emph{inactive} ($x_{0,i}$ interior to its factor, $v_{0,i}=0$), \emph{strongly active} ($x_{0,i}$ on the boundary, $v_{0,i}\neq0$), or \emph{degenerate} ($x_{0,i}$ on the boundary, $v_{0,i}=0$).
The critical cone $\mathcal{K}=\mathcal{T}_C(x_0)\cap v_0^\perp$ is correspondingly $\mathbb{R}$, $\{0\}$, or a closed half-line in each coordinate, and its polar satisfies, coordinatewise,
\begin{equation}
(\mathcal{K}^\circ)_i=
\begin{cases}
\{0\} & i \text{ inactive},\\
\mathbb{R} & i \text{ strongly active},\\
\text{a half-line} & i \text{ degenerate}.
\end{cases}
\label{eq:polar_slots}
\end{equation}

\paragraph{Step 1: local reduction}
Because $C$ is a product of copies of $\mathbb{R}$, $[0,\infty)$, and $(-\infty,0]$, the normal-cone graph satisfies the \emph{exact} local reduction identity
\begin{equation}
\mathrm{gph}\,N_C\;\cap\;\mathcal{U}
=\bigl[(x_0,v_0)+\mathrm{gph}\,N_{\mathcal{K}}\bigr]\;\cap\;\mathcal{U}
\label{eq:reduction}
\end{equation}
for some neighborhood $\mathcal{U}$ of $(x_0,v_0)$.
This is the polyhedral reduction lemma; see \cite[Lemma~2E.4]{dontchev2014implicit} and \cite[Ch.~13]{rockafellarwets1998}.
For our orthant product, check each coordinate separately.
At an unconstrained or inactive slot both sides read $\mathbb{R}\times\{0\}$ locally; at a strongly active slot both read $\{0\}\times\mathbb{R}$ locally; at a degenerate slot the graph of $N_{[0,\infty)}$ is a cone with vertex at the origin, so it coincides with its own translate by $(0,0)$ and with $\mathrm{gph}\,N_{\mathcal{K}_i}$ for $\mathcal{K}_i=[0,\infty)$ (mirrored for $m^-$ slots).
Products of the per-coordinate neighborhoods give~\eqref{eq:reduction}.
Since $\cE_{\theta_0}$ is $C^2$ near $x_0$ (Lemma~\ref{lemma:f_theta_scvx}), the graphical derivative of $\cA_{\theta_0}$ at $(z_0,0)$ is the Jacobian of the smooth part plus the derivative~\eqref{eq:reduction} of the set-valued part, which is exactly~\eqref{eq:DA_def}.

\paragraph{Step 2: maximal monotonicity of $D\cA$}
Write $D\cA=G_0+\partial\Phi$ with
\[
G_0 h=\begin{pmatrix}\nabla^2\cE_{\theta_0}(x_0)h_x+K^\top h_\phi\\
-Kh_x\end{pmatrix},
\qquad
\Phi(h)=I_{\mathcal{K}\times\mathbb{R}^{N_c}}(h),
\]
so that $\partial\Phi(h)=N_{\mathcal{K}}(h_x)\times\{0\}$.
$G_0$ is linear, everywhere defined, and monotone (the skew coupling cancels in $\langle G_0h,h\rangle=h_x^\top\nabla^2\cE_{\theta_0}h_x\ge0$), hence maximal monotone; $\partial\Phi$ is the subdifferential of a proper closed convex function, hence maximal monotone; and the sum of two maximal monotone operators one of which has full domain is maximal monotone \cite[Cor.~25.5]{bauschke2017convex}.

\paragraph{Step 3: properties of $\Rz$}
By Minty's theorem in the Hilbert space $(\mathbb{R}^{n+N_c},\langle\cdot,\cdot\rangle_{\cM})$ applied to the maximal monotone operator $\cM^{-1}D\cA$, the resolvent $\Rz=(I+\cM^{-1}D\cA)^{-1}$ is everywhere defined and single-valued, and the standard computation (as in appendix~\ref{app:resolvent_facts}, with $D\cA$ in place of $\cA_\theta$) gives firm nonexpansiveness in $\langle\cdot,\cdot\rangle_{\cM}$.
The graph of $D\cA$ is a cone: for $t>0$, $N_{\mathcal{K}}(th_x)=N_{\mathcal{K}}(h_x)$ and $tN_{\mathcal{K}}(h_x)=N_{\mathcal{K}}(h_x)$ because normal cones to cones are cones, so $D\cA(th)=tD\cA(h)$; hence $\Rz(th)=t\Rz(h)$ for $t\ge0$ and $\Rz(0)=0$.
The fixed-point identity $\Rz(h)=h\iff0\in D\cA(h)$ is immediate from the defining inclusion, and firm nonexpansiveness with $k=0$ gives $\|\Rz(h)\|_{\cM}\le\|h\|_{\cM}$, hence $\|\Rz^n(h)\|_{\cM}\le\|h\|_{\cM}$ for all $n$.

For semi-differentiability, let $u$ be small, $s:=T_{\theta_0}(z_0+u)-z_0$ (so $\|s\|_{\cM}\le\|u\|_{\cM}$ by nonexpansiveness and $T_{\theta_0}(z_0)=z_0$), and $\tilde s:=\Rz(u)$.
Substituting $z^+=z_0+s$ into the resolvent inclusion $\cM(z^+-z_0-u)+\cA_{\theta_0}(z^+)\ni0$, using the equilibrium identities $\nabla\cE_{\theta_0}(x_0)+K^\top\phi_0=-v_0$ and $Kx_0=b$, the Taylor expansion $\nabla\cE_{\theta_0}(x_0+s_x)=\nabla\cE_{\theta_0}(x_0) +\nabla^2\cE_{\theta_0}(x_0)s_x+\rho(s_x)$ with $\|\rho(s_x)\|=o(\|s_x\|)$, and the exact identity~\eqref{eq:reduction}, the inclusion becomes
\[
\cM(s-u)+D\cA(s)+\bigl(\rho(s_x),0\bigr)\ni0 .
\]
(The use of~\eqref{eq:reduction} here is legitimate for all sufficiently small $u$: the normal-cone element selected by the perturbed inclusion, $w=-T_x^{-1}(s_x-u_x)-K^\top(s_\phi-u_\phi) -\nabla\cE_{\theta_0}(x_0+s_x)-K^\top(\phi_0+s_\phi) \in N_C(x_0+s_x)$, is continuous in $(u,s)$ with value $v_0$ at $u=0$, and $\|s\|_{\cM}\le\|u\|_{\cM}$, so the pair $(x_0+s_x,w)$ lies in the neighborhood $\mathcal U$ of~\eqref{eq:reduction} once $u$ is small.)
Compare this inclusion with
\[
\cM(\tilde s-u)+D\cA(\tilde s)\ni0.
\]
Monotonicity of $D\cA$ gives
\[
\|s-\tilde s\|_{\cM}
\le\|\cM^{-1}(\rho(s_x),0)\|_{\cM}
=o(\|s\|)=o(\|u\|).
\]
The estimate is uniform over directions because the Taylor remainder of the $C^2$ function $\cE_{\theta_0}$ is uniform on compact sets.
This proves part (1); see \cite{rockafellar1989proto} for the general proto-differentiability framework behind this argument.

\paragraph{Step 4: the kernel is trivial}
Let $0\in D\cA(h)$: there is $w\in N_{\mathcal{K}}(h_x)$ with
\[
\nabla^2\cE_{\theta_0}(x_0)h_x+K^\top h_\phi+w=0,
\qquad
Kh_x=0 .
\]
Pair the first equation with $h_x$.
The coupling term vanishes, $\langle K^\top h_\phi,h_x\rangle=\langle h_\phi,Kh_x\rangle=0$, and so does $\langle w,h_x\rangle$: for a closed convex cone $\mathcal{K}$, $w\in N_{\mathcal{K}}(h_x)$ means $h_x\in\mathcal{K}$, $w\in\mathcal{K}^\circ$, and $\langle w,h_x\rangle=0$.
Hence $h_x^\top\nabla^2\cE_{\theta_0}(x_0)h_x=0$, and strong convexity (Lemma~\ref{lemma:f_theta_scvx}) forces $h_x=0$.

With $h_x=0$, the first equation reads $-K^\top h_\phi=w\in N_{\mathcal{K}}(0)=\mathcal{K}^\circ$.
Consider only the \emph{density} slots of this vector equation.
Density slots are inactive because $\rho\ge\rho_{\min}>0$ on $U$ (Assumption~\ref{asm:forward_regularity}(1)).
By~\eqref{eq:polar_slots} the polar vanishes there, and by the block structure of $K$ (appendix~\ref{app:K_structure}) the density rows of $K^\top h_\phi$ are exactly $D_t^\top h_\phi$, sitewise, with no contribution from the flux columns.
Hence $D_t^\top h_\phi=0$ at every site.
$D_t$ is square and lower-bidiagonal with diagonal entries $1/h_t$ (appendix~\ref{app:K_structure}), so $D_t^\top$ is injective by back-substitution starting from the terminal row, and $h_\phi=0$.
No information about the flux rows was used: the argument is indifferent to the flux active and degenerate sets.
This proves part (2).

\paragraph{Step 5: geometric decay after finite grouping}
$\Rz$ is firmly nonexpansive on the $\cM$-Hilbert space, hence $\tfrac12$-averaged, with $\operatorname{Fix}\Rz=\ker D\cA=\{0\}\ni0$ nonempty; by the Krasnosel'skii--Mann theorem for averaged operators \cite[Thm.~5.15]{bauschke2017convex}, $\Rz^n h$ converges to a fixed point, i.e., $\Rz^n h\to0$ for every $h$ (weak equals strong in finite dimension).
Define $\varphi_n(h):=\|\Rz^n(h)\|_{\cM}$ on the compact $\cM$-unit sphere: each $\varphi_n$ is continuous ($\Rz$ is $1$-Lipschitz), the sequence is pointwise nonincreasing (Step 3), and it converges pointwise to the continuous limit $0$; by Dini's theorem the convergence is uniform, so there is $r_*$ with $q_{r_*}=\max_{\|h\|_{\cM}=1}\varphi_{r_*}(h)<1$ (the maximum is attained by compactness).
By positive homogeneity,
\[
q_r:=\sup_{h\neq0}\frac{\|\Rz^r(h)\|_{\cM}}{\|h\|_{\cM}}\le1
\quad (r\ge1),
\qquad
q_{m+n}\le q_mq_n .
\]
Consequently,
\[
\|\Rz^n(h)\|_{\cM}
\le q_{r_*}^{\lfloor n/r_*\rfloor}\|h\|_{\cM}.
\]
Thus the state-direction gauge $q_r$ has the same no-transient and grouped-decay properties as the powers of a linear nonexpansive map.
Only under a locally fixed active set, where $\Rz$ restricts on $V$ to the linear map $L_1$ (Proposition~\ref{prop:smooth_bridge}), does this state decay enter the gradient-bias bound of Corollary~\ref{cor:finite_depth}.
When the active set changes, no gradient-bias statement follows.
This proves part (3).
\qed

\begin{remark}[Scope of the kernel mechanism]
\label{rmk:kernel_mechanism_scope}
The kernel argument uses strong convexity, density interiority, and the injective temporal block $D_t^\top$: it is specific to discretizations whose equality block contains an injective sub-block acting on coordinates that are never active.
This mechanism is absent, for example, in a periodic-in-time discretization, where $D_t$ is circulant.
Whether the conclusion of Theorem~\ref{thm:nonsmooth_kernel} itself persists in that setting is not addressed here.
\end{remark}

\section{Proofs of the Fixed-Active-Set Consistency Results}
\label{app:consistency_proof}

Throughout this section we retain the general resolvent family $R_{\theta,\lambda}$, $\lambda>0$, and the associated linearizations
\[
L_\lambda:=(\cM+\lambda H)^{-1}\cM,
\qquad
P_\lambda:=-\lambda(\cM+\lambda H)^{-1}B,
\]
which are well defined for every $\lambda>0$ because the symmetric part of $\cM+\lambda H$ is $\succeq\cM\succ0$.
The main article states its results at the unit resolvent parameter $\lambda=1$ only; in that notation $L=L_1$ and $P=P_1$, and $T_\theta=R_{\theta,1}$ by Theorem~\ref{thm:pdhg_resolvent}.
Every statement below specializes to the main-text form at $\lambda=1$.

The main article suppresses the coordinate projections of the reduced active-set representation.
The following proposition records them and identifies the reduction of the directional objects of section~\ref{subsec:nonsmooth}.

\begin{proposition}[Reduced coordinates and the fixed-active-set derivative resolvent]
\label{prop:smooth_bridge}
Suppose Assumptions~\ref{asm:pdhg_stepsizes}, \ref{asm:forward_regularity}, and~\ref{asm:local_chart} hold, and let $V:=\mathcal{K}\times\mathbb{R}^{N_c}$, where under a locally fixed active set the critical cone $\mathcal{K}$ is the subspace of primal directions vanishing at the strongly active slots, so that $V$ is the retained-coordinate subspace of Assumption~\ref{asm:local_chart}.
Then $\operatorname{ran}\Rz\subseteq V$, and for every $h\in V$, in the reduced coordinates, the graphical derivative~\eqref{eq:DA_def} is linear,
\[
D\cA(h)=Hh
\qquad\text{with $H$ as in~\eqref{eq:H_block}},
\]
and the derivative resolvent reduces to the linearized PDHG map $\Rz(h)=L_1h$.
In particular $V$ is $\Rz$-invariant.
Outside $V$, active coupling can give $\Rz(h)\ne L_1h$; all downstream uses lie in $V$, since $u_0=0$ and $\operatorname{ran}P_\lambda\subseteq V$.
\end{proposition}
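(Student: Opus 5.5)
The plan is to work directly with the defining inclusion of the derivative resolvent, $\cM(\Rz(h)-h)+D\cA(\Rz(h))\ni0$, and to use the slot classification of the critical cone under Assumption~\ref{asm:local_chart}. Under a locally fixed active set with strict complementarity there are no degenerate slots. Each coordinate of $\mathcal{K}$ is therefore either $\mathbb{R}$ (inactive, including every density slot) or $\{0\}$ (strongly active). Hence $\mathcal{K}$ is the linear subspace of primal directions vanishing on $\mathcal A$, and $V=\mathcal{K}\times\mathbb{R}^{N_c}$ is exactly the retained-coordinate subspace of Assumption~\ref{asm:local_chart}.

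\textbf{Step 1: $\operatorname{ran}\Rz\subseteq V$.} The inclusion $\cM(s-h)+D\cA(s)\ni0$ with $s=\Rz(h)$ requires $D\cA(s)\neq\emptyset$. By~\eqref{eq:DA_def} this forces $N_{\mathcal{K}}(s_x)\neq\emptyset$, i.e.\ $s_x\in\mathcal{K}$, so $s\in V$.

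\textbf{Step 2: $D\cA$ restricted to $V$.} For $h\in V$, since $\mathcal{K}$ is a subspace, $N_{\mathcal{K}}(h_x)=\mathcal{K}^\perp$, which is the span of the coordinates in $\mathcal A$. Writing $\Pi$ for the coordinate projection onto the retained slots, the projected graphical derivative is single-valued:
\[
\Pi D\cA(h)=\begin{pmatrix}\Pi\nabla^2\cE_{\theta_0}(x_0)\Pi^\top h_x+\Pi K^\top h_\phi\\ -K\Pi^\top h_x\end{pmatrix}.
\]
In reduced coordinates this is $Hh$ with $Q=\Pi\nabla^2\cE\,\Pi^\top$ and $K$ the retained constraint matrix; the dual row is unaffected because $h_x$ vanishes on $\mathcal A$. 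This identification matches the derivative $D_z\mathcal F_{\mathcal A}$ of the reduced residual, since eliminating active coordinates (fixed at the boundary) deletes the corresponding columns, and their stationarity rows are absorbed by the multipliers. That gives $D\cA(h)=Hh$ in the stated sense.

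\textbf{Step 3: $\Rz(h)=L_1h$ on $V$.} For $h\in V$, apply $\Pi$ to the resolvent inclusion, using the reduced metric $\Pi\cM\Pi^\top$ (which is what $\cM$ denotes in reduced coordinates). This gives $\cM(s-h)+Hs=0$ in reduced coordinates, so $s=(\cM+H)^{-1}\cM h=L_1h$; this inverse is well defined because the symmetric part of $\cM+H$ is $\succeq\cM$. The component of the inclusion along $\mathcal A$ is then satisfied by a suitable choice of $w\in\mathcal{K}^\perp$, since that component is unconstrained. By uniqueness of $\Rz$ (Theorem~\ref{thm:nonsmooth_kernel}(1)), $\Rz(h)=L_1h$. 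Invariance of $V$ follows from Step 1.

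\textbf{Step 4: downstream uses.} For $h\notin V$, the $\mathcal A$-components of $h$ enter through the off-diagonal blocks of $\cM$, namely $K^\top$ acting on active flux columns. The reduced formula therefore need not hold, which explains the caveat in the statement. For the downstream claims, $u_0=0\in V$ is immediate. For $\operatorname{ran}P_\lambda\subseteq V$: $P_\lambda$ is defined in reduced coordinates as the $\theta$-derivative of a step along the fixed-active-set branch, where the active coordinates stay pinned at zero, so its range embeds in $V$.

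The main obstacle is Step 3: $\cM$ couples active and retained coordinates, so one must check carefully that projecting the inclusion, rather than restricting it, is consistent with how the reduced metric is defined. I would fix this by defining the reduced $\cM$ as $\Pi\cM\Pi^\top$ and verifying that the leftover active-row equation is always solvable by $w\in\mathcal{K}^\perp$.
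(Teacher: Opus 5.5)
Your proposal is correct and follows essentially the same route as the paper. In both, $s_x\in\operatorname{dom}N_{\mathcal K}=\mathcal K$ gives $\operatorname{ran}\Rz\subseteq V$; compressing the inclusion by $J_A=\operatorname{diag}(\Pi_A,I_{N_c})$ (your $\Pi$) removes the term $w\in\mathcal K^\perp$ and leaves $(J_A^\top\cM J_A)(\bar s-\bar h)+H\bar s=0$; and $\operatorname{ran}P_\lambda\subseteq V$ follows by lifting the reduced columns.

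One small correction to your Step 4. Outside $V$, the coupling that breaks $\Rz(h)=L_1h$ is $K_{:,\mathrm{act}}h_{x,\mathrm{act}}$ entering the dual row $S^{-1}(s_\phi-h_\phi)=Kh_x$, i.e.\ the lower-left $K$ block of $\cM$. It is not $K^\top$: the active rows of the $K^\top$ terms are simply absorbed by the free active components of $w$.
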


\medskip
\noindent
\textbf{Reduced coordinates and proof of Proposition~\ref{prop:smooth_bridge}.} Let $\Pi_A:\mathbb{R}^{n_A}\to\mathbb{R}^n$ inject the retained primal coordinates and set $J_A:=\operatorname{diag}(\Pi_A,I_{N_c})$.
The reduced Hessian, constraint, and metric blocks are $\Pi_A^\top\nabla^2\cE_{\theta_0}\Pi_A$, $K\Pi_A$, and $J_A^\top\cM J_A$, respectively; the main article suppresses these projections and reuses $x$, $z$, $H$, $K$, and $\cM$.
Since the density is interior, all density columns are retained, so $K\Pi_A$ contains the unchanged $D_t$ block and has full row rank (appendix~\ref{app:K_structure}).

Under the locally fixed active set, strict complementarity makes the critical cone $\mathcal K=\operatorname{ran}\Pi_A$, a subspace.
If $s=\Rz(h)$, then $s_x\in\operatorname{dom}N_{\mathcal K}=\mathcal K$, so $\operatorname{ran}\Rz\subseteq V$.
For $h\in V$, the active components of $s_x$ and $h_x$ vanish; $N_{\{0\}}(0)=\mathbb R$ absorbs the active primal rows, while the inactive normal-cone factors vanish.
Writing $h=J_A\bar h$ and $s=J_A\bar s$, the defining inclusion therefore reduces to $(J_A^\top\cM J_A)(\bar s-\bar h)+H\bar s=0$.
Thus, in the suppressed reduced notation, $D\cA(h)=Hh$ and $\Rz(h)=(\cM+H)^{-1}\cM h=L_1h$; the inverse exists because the symmetric part of $\cM+H$ is $\succeq\cM\succ0$.

For $h\notin V$, the dual row is $S^{-1}(s_\phi-h_\phi)=Kh_x$ and retains the omitted coupling $K_{:,\mathrm{act}}h_{x,\mathrm{act}}$, so $\Rz(h)=L_1h$ need not hold.
Finally, differentiating the reduced resolvent equation gives $P_\lambda=-\lambda(\cM+\lambda H)^{-1}B$ in reduced coordinates; lifting each column through $J_A$ leaves every active primal component zero, proving $\operatorname{ran}P_\lambda\subseteq V$.
The same reduced construction gives $L_\lambda(V)\subseteq V$.
Since $u_0=0$ and $u_{j+1}=L_\lambda u_j+P_\lambda$, every downstream use remains in $V$.
\hfill$\square$

\medskip
\noindent
\textbf{Proof of the spectral claim of Proposition~\ref{prop:side_by_side}.} Let $L_\lambda v=\mu v$ with $v\neq0$.
If $\mu=1$, then $(\cM+\lambda H)v=\cM v$, hence $Hv=0$; conversely $Hv=0$ implies $L_\lambda v=v$.
Thus $\operatorname{Fix}(L_\lambda)=\ker H$.
Since $L_\lambda$ is invertible, $\mu\neq0$, and the eigenvalue equation rearranges to
\[
Hv=c\,\cM v,
\qquad
c=\frac{1-\mu}{\mu\lambda},
\qquad
\mu=(1+\lambda c)^{-1}.
\]
Multiplication by $v^*$ gives $v^*Hv=c\,v^*\cM v$.
Monotonicity of $H$ implies $\operatorname{Re}(c)\ge0$, and hence $|\mu|=|1+\lambda c|^{-1}\le1$, with equality only when $c=0$.
Therefore $\spr(L_\lambda)\le1$ and $\spr(L_\lambda)<1$ exactly when $\ker H=\{0\}$.

Under the assumptions, $H$ is invertible by Lemma~\ref{lem:H_invertible}. Hence $\ker H=\{0\}$, and therefore $\spr(L_\lambda)<1$.
\hfill$\square$

\medskip
\noindent
\textbf{Proof of Theorem~\ref{thm:jfb_consistency}.} Fix $\lambda>0$.
Exact detachment gives $z^j=z_0$ for every tracked step and $u_0=0$.
Differentiating $z^{j+1}=R_{\theta,\lambda}(z^j)$ and the local resolvent equation gives
\[
u_{j+1}=L_\lambda u_j+P_\lambda,
\qquad
u_r=\sum_{k=0}^{r-1}L_\lambda^kP_\lambda.
\]
By the general-$\lambda$ spectral argument above, $\spr(L_\lambda)<1$, and hence
\[
u_r=(I-L_\lambda)^{-1}(I-L_\lambda^r)P_\lambda
\longrightarrow (I-L_\lambda)^{-1}P_\lambda=-H^{-1}B,
\]
where the last identity follows directly from the definitions of $L_\lambda$ and $P_\lambda$.
Moreover,
\[
u_r+H^{-1}B=-(I-L_\lambda)^{-1}L_\lambda^rP_\lambda.
\]
This is the implicit sensitivity of Lemma~\ref{lem:H_invertible} and its finite-depth bias.
For $\lambda=1$, pairing with $g$ proves the stated convergence and bound.
If the true gradient is nonzero, the bound tends to zero; once it falls below $\|\nabla_\theta F(\theta_0)\|$, the Cauchy--Schwarz estimate $\langle a,d\rangle\ge\|a\|^2-\|a\|\,\|d-a\|$ with $a=\nabla_\theta F(\theta_0)$ and $d=d_{r}^{\mathrm{JFB}}(\theta_0)$ gives eventual descent.
\hfill$\square$

\medskip
Because $\lambda>0$ was arbitrary, the argument covers the whole resolvent family $R_{\theta,\lambda}$; the main article states only the case $\lambda=1$, for which $R_{\theta,1}=T_\theta$.

The following conditional bound is the finite-depth counterpart of Theorem~\ref{thm:jfb_consistency}; the grouped depth $r_*$ enters only here, converting an operator-norm estimate into a quantitative certificate at finite tracked depth.
Its operator-tail hypothesis is not verified in the reported experiments.

\begin{corollary}[Conditional finite-depth error bound]
\label{cor:finite_depth}
Under the assumptions of Theorem~\ref{thm:jfb_consistency}, one has $\|L_1^{\,k}\|_{\cM}\le1$ for all $k\ge0$.
Suppose a (numerically verified or proved) grouped-map upper bound
\[
\|L_1^{\,r_*}\|_{\cM}\;\le\;q_{\cM}\;<\;1
\qquad\text{for some } r_*\ge1
\]
is available.
Then $\spr(L_1)\le q_{\cM}^{1/r_*}<1$, and for every integer $r\ge1$,
\[
\begin{aligned}
\|L_1^{\,r}\|_{\cM}
&\le q_{\cM}^{\lfloor r/r_*\rfloor},\\
\bigl\|d_{r}^{\mathrm{JFB}}(\theta_0)
-\nabla_\theta F(\theta_0)\bigr\|_2
&\le
C_{\cM}\,
\frac{r_*\,q_{\cM}^{\lfloor r/r_*\rfloor}}{1-q_{\cM}},
\end{aligned}
\]
where $C_{\cM}:=\|g\|_{\cM^{-1}}\,\|P_1\|_{2\to\cM}$.
If moreover the right-hand side is smaller than $\|\nabla_\theta F(\theta_0)\|_2$, then $\langle\nabla_\theta F(\theta_0),d_{r}^{\mathrm{JFB}}(\theta_0)\rangle>0$.
\end{corollary}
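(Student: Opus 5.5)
The plan has three steps: obtain the uniform bound $\|L_1^{\,k}\|_{\cM}\le1$ from the resolvent structure; turn the hypothesis into grouped geometric decay; and then bound the bias using the closed form of Theorem~\ref{thm:jfb_consistency}.

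\emph{Step 1: nonexpansiveness of $L_1$.} All relevant vectors lie in the $L_1$-invariant retained subspace $V$, and there $L_1=\Rz$ by Proposition~\ref{prop:smooth_bridge}. It is cleaner, though, to argue directly from monotonicity of $H$. Set $s=L_1h$, so that $\cM(s-h)+Hs=0$. Pairing with $s$ and using $\langle Hs,s\rangle\ge0$ gives $\|s\|_{\cM}^2\le\langle h,s\rangle_{\cM}$. Cauchy--Schwarz in the $\cM$-inner product then yields $\|s\|_{\cM}\le\|h\|_{\cM}$. Hence $\|L_1\|_{\cM}\le1$, and by submultiplicativity $\|L_1^{\,k}\|_{\cM}\le1$ for every $k$.

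\emph{Step 2: grouped decay.} Write $r=a r_*+b$ with $a=\lfloor r/r_*\rfloor$ and $0\le b<r_*$. Submultiplicativity gives
\[
\|L_1^{\,r}\|_{\cM}\le\|L_1^{\,r_*}\|_{\cM}^{a}\,\|L_1^{\,b}\|_{\cM}\le q_{\cM}^{a}.
\]
For the spectral radius, Gelfand's formula (or simply $\spr(L_1)^{r_*}=\spr(L_1^{r_*})\le\|L_1^{r_*}\|_{\cM}$) gives $\spr(L_1)\le q_{\cM}^{1/r_*}$.

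\emph{Step 3: the bias bound.} Theorem~\ref{thm:jfb_consistency} gives
\[
d_r^{\mathrm{JFB}}-\nabla_\theta F=-P_1^\top(L_1^\top)^r(I-L_1^\top)^{-1}g .
\]
Taking transposes, the bias paired with any unit $\eta\in\mathbb R^p$ equals $-\langle g,(I-L_1)^{-1}L_1^{\,r}P_1\eta\rangle$. Using $(I-L_1)^{-1}L_1^r=\sum_{k\ge r}L_1^k$ (valid since $\spr(L_1)<1$) and Step 2,
\[
\sum_{k\ge r}q_{\cM}^{\lfloor k/r_*\rfloor}\le r_*\sum_{j\ge\lfloor r/r_*\rfloor}q_{\cM}^{j}=\frac{r_*\,q_{\cM}^{\lfloor r/r_*\rfloor}}{1-q_{\cM}},
\]
because each value of $\lfloor k/r_*\rfloor$ occurs at most $r_*$ times. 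The dual-norm inequality $|\langle g,v\rangle|\le\|g\|_{\cM^{-1}}\|v\|_{\cM}$ together with $\|P_1\eta\|_{\cM}\le\|P_1\|_{2\to\cM}$ then gives the stated bound with constant $C_{\cM}$; taking the supremum over unit $\eta$ gives the Euclidean norm.

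The final descent claim is the same Cauchy--Schwarz argument as at the end of the proof of Theorem~\ref{thm:jfb_consistency}. The step needing the most care is the bookkeeping around $V$, namely making sure the $\cM$-norm is taken on the reduced coordinates where $L_1$ is linear. Since the reduced metric $J_A^\top\cM J_A$ is positive definite and $H$ there is monotone, Step 1 goes through verbatim on the reduced coordinates.
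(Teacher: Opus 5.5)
Your proposal is correct and follows the paper's proof essentially step for step: submultiplicative grouping for $\|L_1^{\,r}\|_{\cM}$, a Neumann-series bound, and the $\cM$/$\cM^{-1}$ duality pairing. For the bias you sum the tail $\sum_{k\ge r}\|L_1^{\,k}\|_{\cM}$, whereas the paper multiplies $\|(I-L_1)^{-1}\|_{\cM}\le r_*/(1-q_{\cM})$ by $\|L_1^{\,r}\|_{\cM}$; both give the same constant. The only other difference is that you derive $\|L_1\|_{\cM}\le1$ directly from monotonicity of the reduced $H$ via $\cM(s-h)+Hs=0$, instead of citing firm nonexpansiveness of $T_{\theta_0}$ plus differentiability at the fixed point, which is slightly more self-contained; you also spell out the spectral-radius claim, which the paper leaves implicit.
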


Such a pair $(r_*,q_{\cM})$ exists: Theorem~\ref{thm:nonsmooth_kernel}(3) together with Proposition~\ref{prop:smooth_bridge} gives $\spr(L_1)<1$ on $V$, and Gelfand's formula then supplies a finite $r_*$ with $q_{\cM}<1$, though without an explicit estimate of $r_*$.

Because $L_1$ is generally non-normal, $\spr(L_1)^r$ alone is \emph{not} a valid finite-depth bound: $\|L_1^r\|$ can grow transiently before decaying, and no a priori bound on the required depth $r_0$ of Theorem~\ref{thm:jfb_consistency} is claimed.
The $\cM$-norm is used because firm nonexpansiveness removes that transient.

\medskip
\noindent
\textbf{Proof of Corollary~\ref{cor:finite_depth}.} Firm nonexpansiveness and differentiability at the fixed point give $\|L_1\|_{\cM}\le1$.
Write $r=jr_*+s$ with $0\le s<r_*$.
Then
\[
\|L_1^r\|_{\cM}
\le\|L_1^{r_*}\|_{\cM}^j\|L_1^s\|_{\cM}
\le q_{\cM}^{\lfloor r/r_*\rfloor}.
\]
Also,
\[
\|(I-L_1)^{-1}\|_{\cM}
\le\sum_{k\ge0}\|L_1^k\|_{\cM}
\le\frac{r_*}{1-q_{\cM}}.
\]
For any $\xi\in\mathbb{R}^p$ with $\|\xi\|_2=1$,
\[
\begin{aligned}
\bigl|\langle d_{r}^{\mathrm{JFB}}-\nabla_\theta F,\xi\rangle\bigr|
&=
\bigl|\langle (I-L_1)^{-1}L_1^rP_1\xi,\cM^{-1}g\rangle_{\cM}\bigr|\\
&\le
\|(I-L_1)^{-1}\|_{\cM}\|L_1^r\|_{\cM}
\|P_1\|_{2\to\cM}\|g\|_{\cM^{-1}}.
\end{aligned}
\]
Combining the displays gives the stated bound, and the Cauchy--Schwarz estimate from the proof of Theorem~\ref{thm:jfb_consistency} gives the descent conclusion.
\hfill$\square$

\section{Proof of Proposition~\ref{prop:adjoint_alignment}}
\label{app:adjoint_alignment}

\begin{proposition}[Adjoint-space alignment before parameter compression]
\label{prop:adjoint_alignment}
Suppose Assumptions~\ref{asm:pdhg_stepsizes}, \ref{asm:forward_regularity}, and~\ref{asm:local_chart} hold at $(\theta_0,z_0)$ with exact equilibrium detachment $\bar z=z_0=z_{\theta_0}^\star$, let $H$, $B$ be as in~\eqref{eq:H_block} and $g:=\nabla_z\widehat\ell(z_0)\neq0$, and fix $\lambda>0$.
For $r\ge1$, write
\[
d_{r}^{\mathrm{JFB},\lambda}(\theta_0)
:=\bigl(\partial_\theta R_{\theta,\lambda}^{\,r}(z_0)\big|_{\theta=\theta_0}\bigr)^{\!\top} g
\]
for the JFB-$r$ direction built from the resolvent family $R_{\theta,\lambda}$; at $\lambda=1$ this is the $d_r^{\mathrm{JFB}}$ of the main article.
Define the adjoint iterates $w_0:=0$ and
\[
\cM\,(w_{k+1}-w_k)+\lambda\bigl(H^\top w_{k+1}-g\bigr)=0,
\qquad k\ge0 ,
\]
the general-$\lambda$ form of~\eqref{eq:adjoint_pp}.
Then:
\begin{enumerate}
\item \emph{(Adjoint iteration)} Each $w_{k+1}$ is well defined, and the recursion is the proximal-point (backward Euler) iteration, in the metric $\cM$ with parameter $\lambda$, for the adjoint system $H^\top w=g$ associated with the implicit-gradient formula~\eqref{eq:implicit_gradient}.
Its unique fixed point is $w^\star:=H^{-\top}g$, and
    \[
    d_{r}^{\mathrm{JFB},\lambda}(\theta_0)=-B^\top w_r
    \quad\text{for every } r\ge1,
    \qquad
    \nabla_\theta F(\theta_0)=-B^\top w^\star .
    \]

\item \emph{($\cM$-alignment)} For every $r\ge1$ and every $\lambda>0$,
    \begin{equation}
    2\,\langle w^\star,\,w_r\rangle_{\cM}
    \;\ge\;
    \|w_r\|_{\cM}^2+\sum_{k=0}^{r-1}\|w_{k+1}-w_k\|_{\cM}^2
    \;>\;0 ,
    \label{eq:adjoint_alignment}
    \end{equation}
and $\|w_r\|_{\cM}\le2\|w^\star\|_{\cM}$.

\item \emph{(Transfer identity)}
    \[
    \bigl\langle \nabla_\theta F(\theta_0),\,
    d_{r}^{\mathrm{JFB},\lambda}(\theta_0)\bigr\rangle
    \;=\;
    \bigl\langle w^\star,\;BB^\top w_r\bigr\rangle ,
    \]
the pairing on the right being the Euclidean primal-dual pairing.
\end{enumerate}
\end{proposition}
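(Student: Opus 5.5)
Write everything in the reduced coordinates of Proposition~\ref{prop:smooth_bridge}, so that $H$, $B$, $\cM$ are matrices and the tracked sensitivities obey $u_0=0$, $u_{k+1}=L_\lambda u_k+P_\lambda$, with $L_\lambda=(\cM+\lambda H)^{-1}\cM$ and $P_\lambda=-\lambda(\cM+\lambda H)^{-1}B$ (proof of Theorem~\ref{thm:jfb_consistency}). The plan is to reduce parts (1)--(3) to linear algebra on these matrices, with part (2) being the only genuinely new estimate.

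For part (1), well-definedness holds because $\cM+\lambda H^\top$ has symmetric part $\succeq\cM\succ0$, hence is invertible. The recursion rearranges to $w_{k+1}=(\cM+\lambda H^\top)^{-1}(\cM w_k+\lambda g)$, which is the $\cM$-resolvent step for the monotone affine map $w\mapsto H^\top w-g$. Its fixed points satisfy $H^\top w=g$, so $w^\star=H^{-\top}g$ is unique by Lemma~\ref{lem:H_invertible}. Transposing gives
\[
L_\lambda^\top=\cM(\cM+\lambda H^\top)^{-1},\qquad P_\lambda^\top=-\lambda B^\top(\cM+\lambda H^\top)^{-1}.
\]
Hence $d_r^{\mathrm{JFB},\lambda}=u_r^\top g=P_\lambda^\top\sum_{k<r}(L_\lambda^\top)^k g$. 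Set $v_k:=\cM w_k$; then $v_{k+1}=L_\lambda^\top v_k+\lambda g$. Induction gives $\lambda(\cM+\lambda H^\top)^{-1}\sum_{k<r}(L_\lambda^\top)^kg=w_r$, using $(\cM+\lambda H^\top)^{-1}L_\lambda^\top=(\cM+\lambda H^\top)^{-1}\cM(\cM+\lambda H^\top)^{-1}$, which matches $w_{k+1}=(\cM+\lambda H^\top)^{-1}(\cM w_k+\lambda g)$. This yields $d_r^{\mathrm{JFB},\lambda}=-B^\top w_r$. The formula $\nabla_\theta F=-B^\top w^\star$ is just \eqref{eq:implicit_gradient}.

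For part (2), which I expect to be the main step, substitute $g=H^\top w^\star$ to get $\cM(w_{k+1}-w_k)=-\lambda H^\top(w_{k+1}-w^\star)$. Pair this with $w_{k+1}-w^\star$ in the Euclidean inner product and use monotonicity of $H^\top$ (its symmetric part equals that of $H$, which is $\succeq0$). This gives $\langle w_{k+1}-w_k,\,w_{k+1}-w^\star\rangle_{\cM}\le0$. Apply the polarization identity $2\langle a-b,a-c\rangle=\|a-b\|^2+\|a-c\|^2-\|b-c\|^2$ to obtain
\[
\|w_{k+1}-w^\star\|_{\cM}^2+\|w_{k+1}-w_k\|_{\cM}^2\le\|w_k-w^\star\|_{\cM}^2.
\]
Telescoping from $w_0=0$ gives
\[
\|w_r-w^\star\|_{\cM}^2+\sum_{k<r}\|w_{k+1}-w_k\|_{\cM}^2\le\|w^\star\|_{\cM}^2.
\]
Expanding $\|w_r-w^\star\|_{\cM}^2=\|w_r\|_{\cM}^2-2\langle w^\star,w_r\rangle_{\cM}+\|w^\star\|_{\cM}^2$ gives exactly \eqref{eq:adjoint_alignment}. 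Strict positivity follows because $g\ne0$ forces $w_1=\lambda(\cM+\lambda H^\top)^{-1}g\ne0$, so the $k=0$ increment is positive. The bound $\|w_r\|_{\cM}\le2\|w^\star\|_{\cM}$ follows from $\|w_r-w^\star\|_{\cM}\le\|w^\star\|_{\cM}$ and the triangle inequality.

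For part (3), use part (1) and compute
\[
\langle\nabla_\theta F,d_r^{\mathrm{JFB},\lambda}\rangle=\langle -B^\top w^\star,-B^\top w_r\rangle=\langle w^\star,BB^\top w_r\rangle.
\]
All indices here live in the reduced space, and lifting through $J_A$ leaves the Euclidean pairings unchanged because active components vanish. The only care needed is bookkeeping of the reduced coordinates versus the suppressed projections.
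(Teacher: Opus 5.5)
Your proposal is correct and follows the paper's proof essentially step for step. The same transposition and commutation argument gives $d_r^{\mathrm{JFB},\lambda}=-B^\top w_r$, and pairing $\cM(w_k-w_{k+1})=\lambda H^\top(w_{k+1}-w^\star)$ with $w_{k+1}-w^\star$ is exactly the paper's Fej\'er inequality for $\widetilde L_\lambda=(\cM+\lambda H^\top)^{-1}\cM$, followed by the same telescoping and polarization. One small slip: with $v_k=\cM w_k$ the recursion is actually $v_{k+1}=L_\lambda^\top(v_k+\lambda g)$, not $L_\lambda^\top v_k+\lambda g$. This is harmless, because the identity you actually use, $w_r=\lambda(\cM+\lambda H^\top)^{-1}\sum_{k<r}(L_\lambda^\top)^k g$, follows directly by induction from $w_{k+1}=(\cM+\lambda H^\top)^{-1}(\cM w_k+\lambda g)$.
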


Part~(2) is an alignment statement in the $\cM$-geometry of the adjoint space; by part~(3), descent for the true objective can be lost only in the final compression through $BB^\top$, which is what the parameter-space alignment of Experiment~\ref{sec:exp1} diagnoses.
Active-set changes, finite forward tolerance, and approximate detachment lie outside this model.

\begin{proof}
\emph{(1)} The symmetric part of $\cM+\lambda H^\top$ is $\succeq\cM\succ0$ by monotonicity of $H$, so the step is well defined and \eqref{eq:adjoint_pp} solves to $w_{k+1}=\widetilde L_\lambda w_k+\lambda(\cM+\lambda H^\top)^{-1}g$ with $\widetilde L_\lambda:=(\cM+\lambda H^\top)^{-1}\cM$; started at $w_0=0$ this gives $w_r=\sum_{k=0}^{r-1}\widetilde L_\lambda^{\,k}\,\lambda \widetilde C_\lambda g$ with $\widetilde C_\lambda:=(\cM+\lambda H^\top)^{-1}$.
Transposing one term of~\eqref{eq:jfb_closed_form}, with $L_\lambda=(\cM+\lambda H)^{-1}\cM$ and $P_\lambda=-\lambda(\cM+\lambda H)^{-1}B$,
\[
\bigl(L_\lambda^{\,k}P_\lambda\bigr)^\top
=-\lambda\,B^\top \widetilde C_\lambda\,(\cM \widetilde C_\lambda)^k
=-\lambda\,B^\top (\widetilde C_\lambda\cM)^k \widetilde C_\lambda
=-B^\top\,\widetilde L_\lambda^{\,k}\,\lambda \widetilde C_\lambda ,
\]
where the middle equality is associativity, $\widetilde C_\lambda(\cM \widetilde C_\lambda)^k=(\widetilde C_\lambda\cM)^k\widetilde C_\lambda$; summing over $k$ gives $d_{r}^{\mathrm{JFB},\lambda} =\sum_{k=0}^{r-1}(L_\lambda^{\,k}P_\lambda)^\top g=-B^\top w_r$.
A fixed point of~\eqref{eq:adjoint_pp} satisfies $H^\top w=g$; under the assumptions $H$ is invertible (Proposition~\ref{prop:side_by_side}), so $w^\star=H^{-\top}g$ is unique, and $\nabla_\theta F(\theta_0)=-B^\top H^{-\top}g =-B^\top w^\star$ by~\eqref{eq:implicit_gradient}.
Subtracting the fixed-point equation from the recursion, $w_{k+1}-w^\star=\widetilde L_\lambda(w_k-w^\star)$.
Since $\cM\widetilde L_\lambda\cM^{-1}=L_\lambda^\top$, we have $\spr(\widetilde L_\lambda)=\spr(L_\lambda)<1$, and hence $w_r\to w^\star$.

\emph{(2)} For arbitrary $v$ set $u:=\widetilde L_\lambda v$, so $v=\cM^{-1}(\cM+\lambda H^\top)u$ and
\[
\bigl\langle \widetilde L_\lambda v,\,
(I-\widetilde L_\lambda)v\bigr\rangle_{\cM}
= u^\top\cM(v-u)
= \lambda\,u^\top H^\top u
\;\ge\;0
\]
by monotonicity of $H$ (display~\eqref{eq:H_block}).
Expanding $\|v\|_{\cM}^2 =\|\widetilde L_\lambda v+(I-\widetilde L_\lambda)v\|_{\cM}^2$ and discarding the nonnegative cross term, $\|\widetilde L_\lambda v\|_{\cM}^2 +\|(I-\widetilde L_\lambda)v\|_{\cM}^2\le\|v\|_{\cM}^2$.
Applying this with $v=w_k-w^\star$, for which $\widetilde L_\lambda v=w_{k+1}-w^\star$ and $(I-\widetilde L_\lambda)v=w_k-w_{k+1}$, yields the Fej\'er inequality $\|w_{k+1}-w^\star\|_{\cM}^2 \le\|w_k-w^\star\|_{\cM}^2-\|w_{k+1}-w_k\|_{\cM}^2$; telescoping from $\|w_0-w^\star\|_{\cM}^2=\|w^\star\|_{\cM}^2$ and polarizing $2\langle w^\star,w_r\rangle_{\cM} =\|w^\star\|_{\cM}^2+\|w_r\|_{\cM}^2-\|w_r-w^\star\|_{\cM}^2$ gives~\eqref{eq:adjoint_alignment}.
Positivity is strict because the $k=0$ term equals $\|w_1\|_{\cM}^2$ and $w_1=\lambda(\cM+\lambda H^\top)^{-1}g\neq0$ for $g\neq0$.
Finally $\|w_r\|_{\cM}\le\|w^\star\|_{\cM}+\|w_r-w^\star\|_{\cM} \le2\|w^\star\|_{\cM}$ by the telescoped Fej\'er bound.

\emph{(3)} Combine the two identities of part~(1).
\end{proof}

\medskip
\noindent
The arguments of appendices~\ref{app:pdhg_resolvent_proof}--\ref{app:adjoint_alignment} extend to other linearly constrained convex problems whenever the corresponding metric positivity, local regularity, and reduced-Jacobian conditions hold.

\section{Experimental details and secondary diagnostics}
\label{app:experimental_details}

This section records the protocols and secondary checks summarized in
section~\ref{sec:experiments} of the main article. Code, exact
configurations, seeds, and diagnostic scripts will be released at
\url{https://github.com/siting6/InverseMFG_JFB}.

\subsection{Common protocol}

All reconstruction runs use dual-extrapolated G-prox PDHG with $W=KK^\top$,
$\tau=\sigma=0.99$, and $\gamma_I=0.01$. Outer iterations reuse the
previous detached primal-dual state; the initial outer iteration uses
a cold start in which the density block tiles $\mu_0$ across all time
steps and the flux and dual blocks are zero. Reference observations use the same solver structure at the ground-truth cost;
the Experiment 4 reference-data settings are given below.

The learned cost is
\[
c_\theta(s)=0.01+\operatorname{softplus}(\operatorname{MLP}_\theta(s)),
\]
where the three-layer tanh MLP uses Flax's default initialization and
no pretraining. Experiments~1--3 use the normalized density pairs
\[
\mu_0(s)\propto1.25-0.25\cos(4\pi s),\qquad
\mu_1(s)\propto1.25+0.25\cos(2\pi s)
\]
in 1D and
\[
\begin{aligned}
\mu_0(s_1,s_2)&\propto
1.25-0.125\cos(4\pi s_1)-0.125\cos(4\pi s_2),\\
\mu_1(s_1,s_2)&\propto
1.25+0.125\cos(2\pi s_1)+0.125\cos(2\pi s_2)
\end{aligned}
\]
in 2D.

Adam uses $\beta_2=0.999$, gradient clipping at norm one, and a cosine
learning-rate schedule from $10^{-2}$ to $10^{-3}$, with
$\beta_1=0.5$ in Experiments~1--3 and $0.9$ in Experiment~4. Methods
use matched data realizations. Computations use float64 JAX, Flax, and
Optax on one 48~GB NVIDIA RTX~6000 Ada; peak memory is measured in a
fresh process. Checkpoint selection and relative error follow the
definitions in section~\ref{sec:experiments}.
Table~\ref{tab:experimental_configs} collects the per-experiment
configurations.

\begin{table}[htbp]
\centering
\footnotesize
\caption{Common experimental configurations. All runs use
$N_{\mathrm{itr}}=100$ PDHG steps per outer iteration.}
\label{tab:experimental_configs}
\begin{tabular}{lccccccc}
\toprule
{Exp.} & {Grid/$N_t$} & {Width} & {Outer} & {$\gamma_T$}
& {Observations/noise} & {Seeds} & {Batch} \\
\midrule
1 & $64/16$ & 32 & 500 & 0.5 & full/clean & 3 & full \\
2 & $32^2/16$--$128^2/64$ & 64 & 300 & 0.1 & full/clean & $1$--$3$ & full \\
3 & $32^2/16$ & 64 & 1000 & 0.1 & full or 50\%/$\gamma_n\le0.05$ & 3 & full \\
4 & $128^2/64\to64^2/32$ & 64 & 2000 & 0.1 & 25\%/$\gamma_n=0.2$ & 3 & 2 \\
\bottomrule
\end{tabular}
\end{table}

\subsection{Experiment 1 implementation and ablation}
\label{app:exp1_details}

AD differentiates all $100$ PDHG steps, whereas AD-ckpt
rematerializes ten segments of ten steps and matches AD loss and
gradients within $2\times10^{-10}$. ID applies BiCGSTAB to
$J_\theta^\top v=\nabla_z\widehat\ell$, where
$J_\theta:=I-D_z\widetilde T_\theta(z)$ is the fixed-point Jacobian of the
implementation-selected branch at the returned state, with tolerance
$10^{-9}$ and at most $200$ iterations. It reuses the matched detached
state rather than re-solving the forward problem inside the custom VJP,
which is the favorable variant for implicit differentiation under the
shared warm-started protocol. JFB-$r$ records only the final $r$ steps.

\subsubsection{Tracked-depth ablation}
\label{app:exp1_ablation}

At $r=1$ the method fails under both learning rates tested. At the
default rate $10^{-2}$ it diverges, with a final error that is itself
unstable, ranging from $141\%$ to $151\%$ across three seeds
at fixed software version; reducing the rate to
$10^{-5}$ prevents divergence but stalls reproducibly near $30\%$
error ($30.3\pm0.9\%$ across the same seeds). Recovery for
$r=5,10$ and for full AD is reported in Table~\ref{tab:exp1} of the
main article.

\subsubsection{Alignment protocol}
\label{app:exp1_alignment_details}

Three JFB-10 runs provide $21$ checkpoints each, spaced every $25$
outer iterations. At each checkpoint,
$r\in\{1,5,10,20\}$ tracks the final steps of the warm solve. The
cold-started ID reference uses $1500$ forward iterations and BiCGSTAB
tolerance $10^{-9}$; warm AD satisfies
$\cos(d^{\mathrm{AD}},d^{\mathrm{ID\text{-}ref}})\ge0.99$ at all $63$
checkpoints. The five earliest checkpoints of each run, those at outer
iterations at most $100$, form the $15$ descent-phase checkpoints; the
rest lie on a plateau where the reference norm is about two orders of
magnitude smaller. All directions are evaluated at states visited by
the JFB-10 runs, so the $r\ne10$ columns are retrospective.

Table~\ref{tab:alignment_all} reports the statistics over all $63$
checkpoints, of which Table~\ref{tab:exp1_alignment} of the main
article is the descent-phase restriction. Because the plateau
reference norm is small, the $r=5$ entries degrade there --- its
full-set median cosine is negative --- while $r=1$ improves; the depth
ordering by median cosine is non-monotone on both checkpoint sets,
with $r=10$ above $r=20$ on the descent phase and below it over all
$63$. Over the $15$
descent-phase checkpoints of Table~\ref{tab:exp1_alignment} of the main
article, the minimum cosines are $-1.00$, $-0.48$, $+0.50$, and $+0.55$
at $r=1,5,10,20$: the two moderate depths are positively aligned at
every descent-phase checkpoint.

Tables~\ref{tab:exp1_alignment} and~\ref{tab:alignment_all} are
computed from the primary alignment sweep; the detach residuals of
appendix~\ref{app:extended_diag} come from a later rerun of the same
sweep under the identical configuration, and the active-set statistics
from a separate equilibrium diagnostic at the ground-truth cost. The
depth ordering by median cosine is unchanged across the two sweeps.

\begin{table}[htbp]
\centering
\caption{Experiment 1 alignment over all $63$ checkpoints (three
JFB-10 trajectories, $21$ checkpoints each). Cosine and relative bias
are Euclidean in parameter space, against
$d^{\mathrm{ID\text{-}ref}}$.}
\label{tab:alignment_all}
\begin{tabular}{rrrrr}
\toprule
$r$ & median $\cos$ & min $\cos$ & positive checkpoints & med.\ rel.\ bias \\
\midrule
$1$  & $+0.21$ & $-1.00$ & $36/63$ & $0.99$ \\
$5$  & $-0.53$ & $-0.88$ & $23/63$ & $1.31$ \\
$10$ & $+0.63$ & $-0.33$ & $55/63$ & $0.96$ \\
$20$ & $+0.89$ & $+0.47$ & $63/63$ & $0.53$ \\
\bottomrule
\end{tabular}
\end{table}

\subsection{Experiment 2 robustness checks}
\label{app:exp2_checks}

At $32^2$, three-seed errors are stable (AD
$1.41/1.48/1.45\%$; JFB-10 $2.03/2.01/1.92\%$). At $64^2$, all
methods show occasional late instability after the loss reaches
$10^{-8}$ (AD $1.43/4.95/1.63\%$; JFB-10
$2.36/72.19/2.30\%$; AD-ckpt $152.6/8.08/1.72\%$). AD-ckpt and AD
compute the same direction in exact arithmetic; the divergent $64^2$
triple occurs in the same late, post-saturation regime in which all
methods here show instability, and reflects float64 arithmetic rather
than a different derivative. One AD run has
$47.6\%$ cost error despite loss $8.5\times10^{-9}$, so low data
misfit need not imply recovery. Increasing the $128^2$ forward budget
from $100$ to $400$ steps gives $3.02\pm0.27\%$ error versus $2.92\%$
in the primary run, so the reported $128^2$ comparison is not
forward-budget limited.

\subsection{Experiment 3 data and noise}
\label{app:exp3_details}

Experiment~3 uses independent noise
$\rho^{\mathrm{obs}}=\rho^{\mathrm{clean}}+\gamma_nU[-0.5,0.5]$,
clamped below at $10^{-8}$ without renormalization. The $50\%$ setting
observes eight of sixteen output times, excluding the fixed initial
density; seeds determine initialization, mask, and noise.

\subsection{Experiment 4 construction}
\label{app:exp4_details}

Ten instances share the ground-truth map $c^\star=c_{\mathrm{mix}}$,
where
\[
\begin{aligned}
c_{\mathrm{mix}}(s_1,s_2)
&=0.8-0.12\cos(2\pi s_1)-0.12\cos(2\pi s_2)\\
&\quad+1.2\exp\!\left(
-\frac{(s_1+0.18)^2+(s_2-0.12)^2}{2(0.10)^2}\right)\\
&\quad+1.0\exp\!\left(
-\frac{(s_1-0.20)^2+(s_2+0.16)^2}{2(0.12)^2}\right).
\end{aligned}
\]
Each initial and terminal density is a unit-mass two-Gaussian mixture
with a positive background. Centers lie in the square ring
$[-0.4,0.4]^2\setminus(-0.25,0.25)^2$ and initial/terminal peaks
occupy opposite annular sectors, forcing transport through the central
inverse-cost bumps. Reference trajectories are computed on a $128^2$, $N_t=64$ grid with
$\tau=\sigma=0.95$ and a $2\times10^4$-iteration budget, then restricted
to $64^2$, $N_t=32$ by $2\times2$ spatial averaging and stride-two time
subsampling. Reconstruction uses analytic coarse-grid endpoint densities.
Each instance observes eight of $32$ output times with noise $\gamma_n=0.2$;
masks and noise are applied after restriction and fixed across methods and seeds.

The instances are fixed; training seeds control initialization and
mini-batch sampling. The resulting reconstructions are shown in
Figure~\ref{fig:exp4_recovery} of the main article.

\subsection{Alignment probe for Experiment 3}
\label{app:exp3_alignment}

This probe is a single representative noisy, partial-in-time
Experiment~3 run, evaluated at $21$ checkpoints (outer iterations
$0$, $50$, \ldots, $950$, $999$). The reported cosines compare JFB-$r$ with the
warm-started full-AD direction at the same checkpoint, not with the
Experiment~1 implicit reference:
\[
\begin{array}{c|rrrr}
r & 1 & 5 & 10 & 25\\ \hline
\text{mean }\cos & -0.08 & -0.40 & 0.35 & 0.99\\
\text{positive} & 33\% & 24\% & 81\% & 100\%
\end{array}
\]
On this run the warm-AD reference has cosine at least $0.959$ with a
separately cold-started AD cross-check at the same
$N_{\mathrm{itr}}=100$. The depth ordering matches the $r=10$ versus $r=25$ recovery
difference in Table~\ref{tab:exp3}.

\subsection{Diagnostics for the theoretical assumptions}
\label{app:extended_diag}

\paragraph{Active-set behavior}
At the true-cost equilibrium, whose fixed-point residual is $4.5\times10^{-13}$, density lies in $[0.80,1.20]$ and flux magnitude is at most $4.3\times10^{-2}$.
Each flux block has $512$ of $1008$ slots on the boundary: $496$ are strongly active and $16$ are degenerate at multiplier tolerance $10^{-8}$.
Nonzero boundary multipliers have median $2.8\times10^{-2}$ and maximum $7.0\times10^{-2}$, against a noise floor near $8\times10^{-14}$.
Across $21$ checkpoints per seed, active sets change in bursts of up to $48$ slots during descent and freeze only on the late plateau.
Thus no single fixed active set covers the trajectory.

\paragraph{Detach residuals}
After the untracked warm-up, the implemented detach residual $\|\widetilde T_\theta(z)-z\|_{\cM}$ has median $2\times10^{-7}$ and maximum $1.2\times10^{-5}$ over all seeds and depths, excluding the initial outer iteration, which begins without a cached warm state; the state norm is about $35$.
The incoming cached-state residual has median $5\times10^{-5}$ and early maximum $6.2\times10^{-2}$.
These residuals diagnose proximity to a fixed point but do not bound sensitivity error.

\bibliographystyle{siamplain}
\bibliography{references}

\begin{thebibliography}{10}

\bibitem{achdou2012planning}
{\sc Y.~Achdou, F.~Camilli, and I.~Capuzzo-Dolcetta}, {\em Mean field games:
  Numerical methods for the planning problem}, SIAM Journal on Control and
  Optimization, 50 (2012), pp.~77--109,
  \url{https://doi.org/10.1137/100790069}.

\bibitem{achdou2010mean}
{\sc Y.~Achdou and I.~Capuzzo-Dolcetta}, {\em Mean field games: numerical
  methods}, SIAM Journal on Numerical Analysis, 48 (2010), pp.~1136--1162.

\bibitem{achdou2020mean}
{\sc Y.~Achdou and M.~Lauri\`{e}re}, {\em Mean field games and applications:
  numerical aspects}, in Mean Field Games, vol.~2281 of Lecture Notes in
  Mathematics, Springer, Cham, 2020, pp.~249--307.

\bibitem{agrawal2022random}
{\sc S.~Agrawal, W.~Lee, S.~Wu~Fung, and L.~Nurbekyan}, {\em Random features
  for high-dimensional nonlocal mean-field games}, Journal of Computational
  Physics, 459 (2022), p.~111136.

\bibitem{andrade2024sparsistency}
{\sc F.~Andrade, G.~Peyr{\'e}, and C.~Poon}, {\em Sparsistency for inverse
  optimal transport}, in International Conference on Learning Representations,
  vol.~2024, 2024, pp.~16575--16606.

\bibitem{bai2019deep}
{\sc S.~Bai, J.~Z. Kolter, and V.~Koltun}, {\em Deep equilibrium models}, in
  Proceedings of the 33rd International Conference on Neural Information
  Processing Systems, 2019, pp.~690--701.

\bibitem{bauschke2017convex}
{\sc H.~H. Bauschke and P.~L. Combettes}, {\em Convex Analysis and Monotone
  Operator Theory in Hilbert Spaces}, CMS Books in Mathematics, Springer, Cham,
  2nd~ed., 2017.

\bibitem{benamou2000computational}
{\sc J.-D. Benamou and Y.~Brenier}, {\em A computational fluid mechanics
  solution to the {M}onge-{K}antorovich mass transfer problem}, Numerische
  Mathematik, 84 (2000), pp.~375--393.

\bibitem{benamou2015augmented}
{\sc J.-D. Benamou and G.~Carlier}, {\em Augmented {L}agrangian methods for
  transport optimization, mean field games, and degenerate elliptic equations},
  Journal of Optimization Theory and Applications, 167 (2015), pp.~1--26.

\bibitem{benamou2017variational}
{\sc J.-D. Benamou, G.~Carlier, and F.~Santambrogio}, {\em Variational mean
  field games}, in Active Particles, Volume 1: Advances in Theory, Models, and
  Applications, Birkh\"{a}user, Cham, 2017, pp.~141--171.

\bibitem{bricenoarias2019implementation}
{\sc L.~M. Brice{\~n}o-Arias, D.~Kalise, Z.~Kobeissi, M.~Lauri\`{e}re,
  A.~Mateos~Gonz\'{a}lez, and F.~J. Silva}, {\em On the implementation of a
  primal-dual algorithm for second order time-dependent mean field games with
  local couplings}, ESAIM: Proceedings and Surveys, 65 (2019), pp.~330--348.

\bibitem{bricenoarias2018proximal}
{\sc L.~M. Brice{\~n}o-Arias, D.~Kalise, and F.~J. Silva}, {\em Proximal
  methods for stationary mean field games with local couplings}, SIAM Journal
  on Control and Optimization, 56 (2018), pp.~801--836,
  \url{https://doi.org/10.1137/16M1095615}.

\bibitem{chambolle2011firstorder}
{\sc A.~Chambolle and T.~Pock}, {\em A first-order primal-dual algorithm for
  convex problems with applications to imaging}, Journal of Mathematical
  Imaging and Vision, 40 (2011), pp.~120--145,
  \url{https://doi.org/10.1007/s10851-010-0251-1}.

\bibitem{chambolle2016ergodic}
{\sc A.~Chambolle and T.~Pock}, {\em On the ergodic convergence rates of a
  first-order primal--dual algorithm}, Mathematical Programming, 159 (2016),
  pp.~253--287, \url{https://doi.org/10.1007/s10107-015-0957-3}.

\bibitem{chow2023numerical}
{\sc Y.~T. Chow, S.~Wu~Fung, S.~Liu, L.~Nurbekyan, and S.~Osher}, {\em A
  numerical algorithm for inverse problem from partial boundary measurement
  arising from mean field game problem}, Inverse Problems, 39 (2023),
  p.~014001.

\bibitem{condat2013primal}
{\sc L.~Condat}, {\em A primal--dual splitting method for convex optimization
  involving {L}ipschitzian, proximable and linear composite terms}, Journal of
  Optimization Theory and Applications, 158 (2013), pp.~460--479,
  \url{https://doi.org/10.1007/s10957-012-0245-9}.

\bibitem{daosud2005neural}
{\sc W.~Daosud, P.~Thitiyasook, A.~Arpornwichanop, P.~Kittisupakorn, and M.~A.
  Hussain}, {\em Neural network inverse model-based controller for the control
  of a steel pickling process}, Computers \& Chemical Engineering, 29 (2005),
  pp.~2110--2119.

\bibitem{ding2022mean}
{\sc L.~Ding, W.~Li, S.~Osher, and W.~Yin}, {\em A mean field game inverse
  problem}, Journal of Scientific Computing, 92 (2022), p.~7.

\bibitem{dontchev2014implicit}
{\sc A.~L. Dontchev and R.~T. Rockafellar}, {\em Implicit Functions and
  Solution Mappings: A View from Variational Analysis}, Springer, New York,
  2nd~ed., 2014.

\bibitem{el2021implicit}
{\sc L.~El~Ghaoui, F.~Gu, B.~Travacca, A.~Askari, and A.~Tsai}, {\em Implicit
  deep learning}, SIAM Journal on Mathematics of Data Science, 3 (2021),
  pp.~930--958.

\bibitem{gelphman2025end}
{\sc E.~Gelphman, D.~Verma, N.~T. Yang, S.~Osher, and S.~Wu~Fung}, {\em
  End-to-end training of high-dimensional optimal control with implicit
  {H}amiltonians via {J}acobian-free backpropagation}, arXiv preprint
  arXiv:2510.00359,  (2025).

\bibitem{gelphman2026convergence}
{\sc E.~Gelphman, D.~Verma, N.~T. Yang, S.~Osher, and S.~Wu~Fung}, {\em On the
  convergence of {J}acobian-free backpropagation for optimal control problems
  with implicit {H}amiltonians}, arXiv preprint arXiv:2602.00921,  (2026).

\bibitem{geng2021training}
{\sc Z.~Geng, X.-Y. Zhang, S.~Bai, Y.~Wang, and Z.~Lin}, {\em On training
  implicit models}, Advances in neural information processing systems, 34
  (2021), pp.~24247--24260.

\bibitem{gonzalez2024nonlinear}
{\sc A.~Gonz{\'a}lez-Sanz, M.~Groppe, and A.~Munk}, {\em Nonlinear inverse
  optimal transport: Identifiability of the transport cost from its marginals
  and optimal values}, SIAM Journal on Mathematical Analysis, 56 (2024),
  pp.~7808--7829.

\bibitem{guo2024decoding}
{\sc J.~Guo, C.~Mou, X.~Yang, and C.~Zhou}, {\em Decoding mean field games from
  population and environment observations by {G}aussian processes}, Journal of
  Computational Physics, 508 (2024), p.~112978,
  \url{https://doi.org/10.1016/j.jcp.2024.112978}.

\bibitem{he2012convergence}
{\sc B.~He and X.~Yuan}, {\em Convergence analysis of primal-dual algorithms
  for a saddle-point problem: From contraction perspective}, SIAM Journal on
  Imaging Sciences, 5 (2012), pp.~119--149.

\bibitem{heaton2023explainable}
{\sc H.~Heaton and S.~Wu~Fung}, {\em Explainable {AI} via learning to
  optimize}, Scientific Reports, 13 (2023), p.~10103.

\bibitem{heaton2021feasibility}
{\sc H.~Heaton, S.~Wu~Fung, A.~Gibali, and W.~Yin}, {\em Feasibility-based
  fixed point networks}, Fixed Point Theory and Algorithms for Sciences and
  Engineering, 2021 (2021), p.~21.

\bibitem{huang2025joint}
{\sc H.~Huang, J.~Yu, T.~Chen, and R.~Lai}, {\em Joint inference of trajectory
  and obstacle in mean-field games via bilevel optimization}, arXiv preprint
  arXiv:2507.19344,  (2025).

\bibitem{caines2006large}
{\sc M.~Huang, R.~P. Malham{\'e}, and P.~E. Caines}, {\em Large population
  stochastic dynamic games: closed-loop {McKean-Vlasov} systems and the {N}ash
  certainty equivalence principle}, Communications in Information and Systems,
  6 (2006), pp.~221--252.

\bibitem{imanuvilov2024lipschitz}
{\sc O.~Imanuvilov, H.~Liu, and M.~Yamamoto}, {\em Lipschitz stability for
  determination of states and inverse source problem for the mean field game
  equations}, Inverse Problems and Imaging, 18 (2024), pp.~824--859,
  \url{https://doi.org/10.3934/ipi.2023057}.

\bibitem{jacobs2019solving}
{\sc M.~Jacobs, F.~L{\'e}ger, W.~Li, and S.~Osher}, {\em Solving large-scale
  optimization problems with a convergence rate independent of grid size}, SIAM
  Journal on Numerical Analysis, 57 (2019), pp.~1100--1123.

\bibitem{knutson2026logical}
{\sc B.~Knutson, A.~C. Rabeendran, M.~Ivanitskiy, J.~Pettyjohn, C.~D. Behn,
  S.~Wu~Fung, and D.~McKenzie}, {\em On logical extrapolation for mazes with
  recurrent and implicit networks}, in Proceedings of the AAAI Conference on
  Artificial Intelligence, vol.~40, 2026, pp.~22635--22643.

\bibitem{lasry2007mean}
{\sc J.-M. Lasry and P.-L. Lions}, {\em Mean field games}, Japanese journal of
  mathematics, 2 (2007), pp.~229--260.

\bibitem{koltun2012continuous}
{\sc S.~Levine and V.~Koltun}, {\em Continuous inverse optimal control with
  locally optimal examples}, in International Conference on Machine Learning
  (ICML), 2012.

\bibitem{li2026end}
{\sc X.~Li, K.~Kan, D.~Verma, K.~Kumar, S.~Osher, and S.~Wu~Fung}, {\em
  End-to-end learning of safe optimal feedback control in high dimensions with
  control barrier function layers}, arXiv preprint arXiv:2607.20674,  (2026).

\bibitem{lin2021alternating}
{\sc A.~T. Lin, S.~Wu~Fung, W.~Li, L.~Nurbekyan, and S.~J. Osher}, {\em
  Alternating the population and control neural networks to solve
  high-dimensional stochastic mean-field games}, Proceedings of the National
  Academy of Sciences, 118 (2021), p.~e2024713118.

\bibitem{liu2023inverse}
{\sc H.~Liu, C.~Mou, and S.~Zhang}, {\em Inverse problems for mean field
  games}, Inverse Problems, 39 (2023), p.~085003.

\bibitem{liu2022inverse}
{\sc H.~Liu and S.~Zhang}, {\em On an inverse boundary problem for mean field
  games}, arXiv preprint arXiv:2212.09110,  (2022).

\bibitem{liu2021computational}
{\sc S.~Liu, M.~Jacobs, W.~Li, L.~Nurbekyan, and S.~J. Osher}, {\em
  Computational methods for first-order nonlocal mean field games with
  applications}, SIAM Journal on Numerical Analysis, 59 (2021), pp.~2639--2668.

\bibitem{liu2021splitting}
{\sc S.~Liu and L.~Nurbekyan}, {\em Splitting methods for a class of
  non-potential mean field games}, Journal of Dynamics and Games, 8 (2021),
  pp.~467--486.

\bibitem{mckenzie2024three}
{\sc D.~McKenzie, H.~Heaton, Q.~Li, S.~Wu~Fung, S.~Osher, and W.~Yin}, {\em
  Three-operator splitting for learning to predict equilibria in convex games},
  SIAM Journal on Mathematics of Data Science, 6 (2024), pp.~627--648.

\bibitem{mckenziedifferentiating}
{\sc D.~McKenzie, H.~Heaton, and S.~Wu~Fung}, {\em Differentiating through
  integer linear programs with quadratic regularization and {D}avis-{Y}in
  splitting}, Transactions on Machine Learning Research,  (2024).

\bibitem{meng2025recent}
{\sc T.~Meng, S.~Liu, S.~Wu~Fung, and S.~Osher}, {\em Recent advances in
  numerical solutions for {Hamilton--Jacobi} {PDEs}}, Communications on Applied
  Mathematics and Computation,  (2026),
  \url{https://doi.org/10.1007/s42967-026-00570-1}.
\newblock arXiv:2502.20833.

\bibitem{minty1962monotone}
{\sc G.~J. Minty}, {\em Monotone (nonlinear) operators in {H}ilbert space},
  Duke Mathematical Journal, 29 (1962), pp.~341--346.

\bibitem{onken2021neural}
{\sc D.~Onken, L.~Nurbekyan, X.~Li, S.~Wu~Fung, S.~Osher, and L.~Ruthotto},
  {\em A neural network approach applied to multi-agent optimal control}, in
  2021 European Control Conference (ECC), IEEE, 2021, pp.~1036--1041.

\bibitem{onken2022neural}
{\sc D.~Onken, L.~Nurbekyan, X.~Li, S.~Wu~Fung, S.~Osher, and L.~Ruthotto},
  {\em A neural network approach for high-dimensional optimal control applied
  to multiagent path finding}, IEEE Transactions on Control Systems Technology,
  31 (2023), pp.~235--251.

\bibitem{onken2021ot}
{\sc D.~Onken, S.~Wu~Fung, X.~Li, and L.~Ruthotto}, {\em {OT-Flow}: Fast and
  accurate continuous normalizing flows via optimal transport}, in Proceedings
  of the AAAI Conference on Artificial Intelligence, vol.~35, 2021,
  pp.~9223--9232.

\bibitem{papadakis2014optimal}
{\sc N.~Papadakis, G.~Peyr{\'e}, and E.~Oudet}, {\em Optimal transport with
  proximal splitting}, SIAM Journal on Imaging Sciences, 7 (2014),
  pp.~212--238.

\bibitem{park2026implicit}
{\sc Y.~Park, E.~Gelphman, S.~Osher, and S.~Wu~Fung}, {\em Implicit neural
  optimal transport via fixed-point optimization}, arXiv preprint
  arXiv:2605.10792,  (2026).

\bibitem{ren2024policy}
{\sc K.~Ren, N.~Soedjak, and S.~Tong}, {\em A policy iteration method for
  inverse mean field games}, Research in the Mathematical Sciences, 13 (2026),
  p.~34.

\bibitem{ren2024reconstructing}
{\sc K.~Ren, N.~Soedjak, K.~Wang, and H.~Zhai}, {\em Reconstructing a
  state-independent cost function in a mean-field game model}, Inverse
  Problems, 40 (2024), p.~105010,
  \url{https://doi.org/10.1088/1361-6420/ad7497}.

\bibitem{rockafellar1976monotone}
{\sc R.~T. Rockafellar}, {\em Monotone operators and the proximal point
  algorithm}, SIAM Journal on Control and Optimization, 14 (1976),
  pp.~877--898.

\bibitem{rockafellar1989proto}
{\sc R.~T. Rockafellar}, {\em Proto-differentiability of set-valued mappings
  and its applications in optimization}, Annales de l'Institut Henri
  Poincar\'e, Analyse Non Lin\'eaire, 6 (1989), pp.~449--482.

\bibitem{rockafellarwets1998}
{\sc R.~T. Rockafellar and R.~J.-B. Wets}, {\em Variational Analysis}, vol.~317
  of Grundlehren der mathematischen Wissenschaften, Springer, Berlin, 1998.

\bibitem{ruthotto2020machine}
{\sc L.~Ruthotto, S.~J. Osher, W.~Li, L.~Nurbekyan, and S.~Wu~Fung}, {\em A
  machine learning framework for solving high-dimensional mean field game and
  mean field control problems}, Proceedings of the National Academy of
  Sciences, 117 (2020), pp.~9183--9193.

\bibitem{ryu2022large}
{\sc E.~K. Ryu and W.~Yin}, {\em Large-scale convex optimization: algorithms \&
  analyses via monotone operators}, Cambridge University Press, 2022.

\bibitem{shaban2019truncated}
{\sc A.~Shaban, C.-A. Cheng, N.~Hatch, and B.~Boots}, {\em Truncated
  back-propagation for bilevel optimization}, in The 22nd international
  conference on artificial intelligence and statistics, PMLR, 2019,
  pp.~1723--1732.

\bibitem{stuart2020inverse}
{\sc A.~M. Stuart and M.-T. Wolfram}, {\em Inverse optimal transport}, SIAM
  Journal on Applied Mathematics, 80 (2020), pp.~599--619.

\bibitem{vidal2025kernel}
{\sc A.~Vidal, S.~Wu~Fung, S.~Osher, L.~Tenorio, and L.~Nurbekyan}, {\em Kernel
  expansions for high-dimensional mean-field control with non-local
  interactions}, in 2025 American Control Conference (ACC), IEEE, 2025,
  pp.~4164--4171.

\bibitem{vidal2023taming}
{\sc A.~Vidal, S.~Wu~Fung, L.~Tenorio, S.~Osher, and L.~Nurbekyan}, {\em Taming
  hyperparameter tuning in continuous normalizing flows using the {JKO}
  scheme}, Scientific Reports, 13 (2023), p.~4501.

\bibitem{vu2013splitting}
{\sc B.~C. V{\~u}}, {\em A splitting algorithm for dual monotone inclusions
  involving cocoercive operators}, Advances in Computational Mathematics, 38
  (2013), pp.~667--681, \url{https://doi.org/10.1007/s10444-011-9254-8}.

\bibitem{wang2025primal}
{\sc X.~Wang, S.~Wu~Fung, and L.~Nurbekyan}, {\em A primal-dual
  price-optimization method for computing equilibrium prices in mean-field
  games models}, Dynamic Games and Applications,  (2025),
  \url{https://doi.org/10.1007/s13235-025-00688-6}.
\newblock Online first.

\bibitem{fung2026generalization}
{\sc S.~Wu~Fung and B.~Berkels}, {\em A generalization bound for a family of
  implicit networks}, Neurocomputing, 678 (2026), p.~133136.

\bibitem{fung2022jfb}
{\sc S.~Wu~Fung, H.~Heaton, Q.~Li, D.~McKenzie, S.~Osher, and W.~Yin}, {\em
  {JFB}: {J}acobian-free backpropagation for implicit networks}, in Proceedings
  of the AAAI Conference on Artificial Intelligence, vol.~36, 2022,
  pp.~6648--6656.

\bibitem{yinlearning}
{\sc W.~Yin, D.~McKenzie, and S.~Wu~Fung}, {\em Learning to optimize: Where
  deep learning meets optimization and inverse problems}, SIAM News,  (2022).

\bibitem{yu2025equilibrium}
{\sc J.~Yu, J.-G. Liu, and H.~Zhao}, {\em Equilibrium correction iteration for
  a class of mean-field game inverse problems}, Inverse Problems, 41 (2025),
  p.~125009, \url{https://doi.org/10.1088/1361-6420/ae2b09}.

\bibitem{yu2024bilevel}
{\sc J.~Yu, Q.~Xiao, T.~Chen, and R.~Lai}, {\em A bilevel optimization method
  for inverse mean-field games}, Inverse Problems, 40 (2024), p.~105016.

\bibitem{zhang2025surrogate}
{\sc J.~Zhang, X.~Yang, C.~Mou, and C.~Zhou}, {\em Learning surrogate potential
  mean field games via {G}aussian processes: A data-driven approach to
  ill-posed inverse problems}, Journal of Computational Physics, 543 (2025),
  p.~114412, \url{https://doi.org/10.1016/j.jcp.2025.114412}.

\end{thebibliography}

\end{document}